\newtheorem{thm}{Theorem}[section]
\newtheorem{defi}[thm]{Definition}
\newtheorem{propo}[thm]{Proposition}
\newtheorem{lem}[thm]{Lemma}
\newtheorem{cor}[thm]{Corollary}
\renewcommand{\Re}{{\rm Re}}
\renewcommand{\Im}{{\rm Im}}
\newcommand{\R}{\mathbb{R}}
\newcommand{\C}{\mathbb{C}}
\newcommand{\Z}{\mathbb{Z}}
\newcommand{\N}{\mathbb{N}}
\newcommand{\D}{ \mathcal{D}}
\newcommand{\hh}{\mathbb{H}^{2}}
\newcommand{\G}{{\bf G}}
\newcommand{\lt}{{\mathcal L}}
\newcommand{\Fp}{{\mathbb F}_p}
\newcommand{\half}{{\textstyle{\frac{1}{2}}}}
\newcommand{\rr}{{ \varrho}}
\newcommand{\QEDB}{\hfill\ensuremath{\square}}
\begin{document}
\bibliographystyle{plain}
\title[Large Covers and hyperbolic surfaces]{Large covers and sharp resonances of hyperbolic surfaces}
\keywords{Convex co-compact fuchsian groups, Hyperbolic surfaces, Laplacian, Resonances, Selberg zeta function, L-functions, Representation theory}

\author[Dmitry Jakobson]{Dmitry Jakobson}
\address{McGill University\\
Department of Mathematics and Statistics\\
805 Sherbrooke Street West\\
Montreal, Quebec, Canada H3A0B9 
}
\email{jakobson@math.mcgill.ca}
\author{Fr\'ed\'eric Naud}
\address{Laboratoire de Math\'ematiques d'Avignon \\
Campus Jean-Henri Fabre, 301 rue Baruch de Spinoza\\
84916 Avignon Cedex 9, France. }
\email{frederic.naud@univ-avignon.fr}
\author{Louis Soares}
\address{  Friedrich-Schiller-Universit\"at Jena \\
Institut f\"ur Mathematik \\
Ernst-Abbe-Platz 2, 07743 Jena Germany . 
}
\email{louis.soares@uni-jena.de}

 \maketitle
\begin{abstract} Let $\Gamma$ be a convex co-compact discrete group of isometries of the hyperbolic plane $\hh$, and $X=\Gamma \backslash \hh$
the associated surface. In this paper we investigate the behaviour of resonances of the Laplacian $\Delta_{\widetilde{X}}$ for large degree covers of $X$ given by $\widetilde{X}=\widetilde{\Gamma} \backslash \hh$ where $\widetilde{\Gamma}\vartriangleleft \Gamma$ is a finite index normal subgroup of $\Gamma$. Using various techniques of thermodynamical formalism and representation theory, we prove two new existence results of "sharp non-trivial resonances" close to $\{\Re(s)=\delta \}$, both in the large degree limit, for abelian covers and infinite index congruence subgroups of $SL_2(\Z)$.
 \end{abstract}
 \tableofcontents
 \newpage
 \section{Introduction and results}
 In mathematical physics, resonances generalize the $L^2$-eigenvalues in situations where the underlying geometry is non-compact. Indeed, when the geometry has infinite volume, the $L^2$-spectrum of the Laplacian is mostly continuous and the natural replacement data for the missing eigenvalues are provided by resonances which arise from a meromorphic continuation of the resolvent of the Laplacian. 
 
To be more specific, in this paper we will work with the positive Laplacian $\Delta_X$ on hyperbolic surfaces $X=\Gamma \backslash \hh$, where $\Gamma$
 is a geometrically finite, discrete subgroup of $PSL_2(\R)$. A good reference on the subject is the book of Borthwick \cite{Borthwick}. Here $\hh$ is the hyperbolic plane endowed with its metric of constant curvature $-1$.
Let $\Gamma$ be a geometrically finite Fuchsian group of isometries acting on $\hh$. This means
that $\Gamma$ admits a finite sided polygonal fundamental domain in $\hh$. We will require that $\Gamma$ has no {\it elliptic} elements different from the identity and that the quotient $\Gamma \backslash \hh$ is of {\it infinite hyperbolic area}. If  $\Gamma$ has no parabolic elements (no cusps), then the group is called convex co-compact. 
We will be working with {\it non-elementary} groups $\Gamma$ so that $X$ is never
a hyperbolic cylinder, a "trivial" case for which resonances can actually be computed.
Under these assumptions, the quotient space 
$X=\Gamma \backslash \hh$ is a Riemann surface (called convex co-compact) whose {\it ends geometry} is well known.
The surface $X$ can be decomposed into a compact surface $N$ with geodesic boundary, called the Nielsen region, on which  ends are glued : funnels and cusps.
We refer the reader to the first chapters of Borthwick \cite{Borthwick} for a description of the metric in the ends. 
The limit set $\Lambda(\Gamma)$ is defined as 
$$\Lambda(\Gamma):=\overline{\Gamma.z}\cap \partial \hh,$$
where $z\in \hh$ is a given point and $\Gamma.z$ is the orbit under the action of $\Gamma$ which accumulates
on the boundary $\partial \hh$. The limit set $\Lambda$ does not depend on the choice of $z$ and its Hausdorff dimension $\delta(\Gamma)$
is the critical exponent of Poincar\'e series \cite{Patterson}. 

\bigskip
The spectrum of $\Delta_X$ on $L^2(X)$ has been described completely by 
Lax and Phillips and Patterson in \cite{LP1,Patterson} as follows: 
\begin{itemize}
\item The half line $[1/4, +\infty)$ is the continuous spectrum.
\item There are no embedded eigenvalues inside $[1/4,+\infty)$.
\item The pure point spectrum is empty if $\delta\leq \half$, and finite and starting at $\delta(1-\delta)$ if $\delta>\half$.
\end{itemize}

Using the above notations, the resolvent 
$$R(s):=(\Delta_X-s(1-s) )^{-1}:L^2(X)\rightarrow L^2(X)$$
is a holomorphic family for $\Re(s) >\half$, except at a finite number of possible poles related to the eigenvalues. From the work of Mazzeo-Melrose 
and Guillop\'e-Zworski \cite{MM, GuiZwor1,GuiZwor2}, it can be meromorphically continued (to all $\C$)  from $C_0^\infty(X)\rightarrow C^\infty(X)$, and poles are called {\it resonances}. We denote
in the sequel by $\mathcal{R}_X$ the set of resonances, written with multiplicities.

\bigskip
To each resonance $s\in \C$ (depending on multiplicity) are associated generalized eigenfunctions (so-called purely outgoing states) $\psi_s \in C^\infty(X)$
which provide stationary solutions of the automorphic {\it wave equation} given by 
$$\phi(t,x)=e^{(s-\half)t}\psi_s(x),$$
$$\left (D_t^2+\Delta_X-\frac{1}{4} \right)\phi=0.$$
From a physical point of view, $\Re(s)-\half$ is therefore a rate of decay while $\Im(s)$ is a frequency of oscillation. Resonances that live the longest are called {\it sharp resonances} and are those for which $\Re(s)$ is the closest to the unitary axis $\Re(s)=\half$. In general, $s=\delta$ is the only explicitly known resonance (or eigenvalue if $\delta>\half$). There are very few effective results on the existence of {\it non-trivial } sharp resonances,
and to our knowledge the best statement so far is due to the authors \cite{JN2}, where it is proved that for all $\epsilon>0$, there are infinitely many resonances in the strip
$$\left \{ \Re(s) >\frac{\delta(1-2\delta)}{2}-\epsilon  \right \}.$$
It is conjectured in the same paper \cite{JN2} that for all $\epsilon>0$, there are infinitely many resonances in the strip $\{ \Re(s)>\delta/2-\epsilon\}$. However, the above result, while proving existence of non-trivial resonances, is typically a high frequency statement and does not provide estimates on the imaginary parts (the frequencies), and it is a notoriously hard problem to locate precisely non-trivial resonances.
The goal of the present work is to obtain a different type of existence result by looking at families of covers of given surface, with large degree. Let us be more specific. Given a {\it finite index normal subgroup } $\widetilde{\Gamma}\vartriangleleft \Gamma$, we denote by
$$\G:=\Gamma /  \widetilde{\Gamma}$$
the (finite) Galois group (or covering group) of the cover $\pi_\G$
$$ \pi_\G:\widetilde{X}=\widetilde{\Gamma} \backslash \hh \rightarrow X=\Gamma \backslash \hh.$$
We have an associated natural projection $r_\G:\Gamma \rightarrow \G$ such that $\mathrm{Ker}(r_\G)=\widetilde{\Gamma}$.
We will denote by $\vert \G \vert$ the cardinality of $\G$, and our purpose is to investigate the presence of non-trivial resonances, 
as $\vert \G \vert$ becomes large. We mention that since $\G$ is a finite group, we have $\Lambda(\Gamma)=\Lambda(\widetilde{\Gamma})$, hence the 
leading resonance $\delta$ remains the same for all finite covers. The end-game of this paper is to produce new resonances close to $\delta$ as $\vert \G \vert$
becomes large and see how the algebraic nature of $\G$ affects their location.

A way to attack any problem on resonances of hyperbolic surfaces is through the Selberg zeta function defined for $\Re(s)>\delta$ by
$$Z_\Gamma(s):=\prod_{\mathcal{C}\in \mathcal{P}} \prod_{k\in \N} \left (1-e^{-(s+k)l(\mathcal{C})} \right), $$
where $\mathcal{P}$ is the set of primitive closed geodesics on $\Gamma \backslash \hh$ and $l(\mathcal{C})$ is the length. This zeta function extends analytically to $\C$ and it is known from the work of Patterson-Perry \cite{PatPerry} that non-trivial zeros of $Z_\Gamma(s)$ are resonances with multiplicities. This zeta function method will be our main tool in the analysis of resonances.

Let $\{ \rr \}$ denote the set
of irreducible complex unitary representations of $\G$,
and given $\rr$ we denote by $\chi_{\rr}=\mathrm{Tr}(\rr)$ its character, $V_\rr$ its linear representation space and we set 
$$d_\rr:=\mathrm{dim}_\C(V_\rr).$$ Our first result is the following.
\begin{thm}
\label{main1} 
Assume that $\Gamma$ is convex co-compact. 
For $\Re(s)>\delta$, consider the L-function defined by
$$L_\Gamma(s,\rr):=\prod_{\mathcal{C}\in \mathcal{P}} \prod_{k\in \N} \det \left (Id_{V_\rr}-\rr(\mathcal{C^k})e^{-(s+k)l(\mathcal{C})} \right),$$
where $\rr(\mathcal{C})$ is understood as $\rr(r_\G(\gamma_{\mathcal{C}}))$ where $\gamma_{\mathcal{C}}\in \Gamma$ is any representative
of the conjugacy class defined by $\mathcal{C}$. Then we have the following facts.
\begin{enumerate}
 \item For all $\rr$ irreducible, $L_\Gamma(s,\rr)$ extends as an analytic function to $\C$.
 \item There exist $C_1,C_2>0$ such that for all $p$ large, all $\rr$ irreducible representation of $\G$, and all $s\in \C$,
 we have
 $$\vert L_\Gamma(s,\rr) \vert \leq C_1 \exp\left(C_2 d_\rr \log(1+d_\rr)(1+ \vert s\vert^2) \right).$$
 \item We have the formula valid for all $s \in \C$,
 $$Z_{\widetilde{\Gamma}}(s)=\prod_{\rr\  \mathrm{irreducible}} \left(L_\Gamma(s,\rr) \right)^{d_\rr}.$$
\end{enumerate}
\end{thm}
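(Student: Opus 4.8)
\medskip
\noindent\textbf{Proof strategy.}
The plan is to realize $Z_\Gamma$, $L_\Gamma(\cdot,\rr)$ and $Z_{\widetilde\Gamma}$ as Fredholm determinants of Ruelle transfer operators attached to a symbolic coding of the geodesic flow on $X$, and to reduce the three assertions respectively to (i) nuclearity and holomorphy in $s$ of those operators, (ii) the decomposition of the regular representation of $\G$, and (iii) a singular-value estimate with explicit dependence on $|s|$ and $d_\rr$. Since $\Gamma$ is convex co-compact and non-elementary it is free, and admits a Bowen--Series (equivalently, a Schottky-type) coding: finitely many open disks $D_1,\dots,D_N\subset\C$ and M\"obius maps $\gamma_a$, indexed by a finite alphabet with an adjacency rule, such that each $\gamma_a$ sends the closure of the relevant disk into another disk with a definite contraction. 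Given a unitary representation $\rr$ of $\G$ I form the twisted transfer operator
$$(\mathcal L_{s,\rr}\mathbf f)_j(z)=\sum_a (\gamma_a'(z))^s\,\rr\bigl(r_\G(\gamma_a)\bigr)\,\mathbf f_{a(j)}(\gamma_a z)$$
on the Bergman-type space $H=\bigoplus_j H^2(D_j)\otimes V_\rr$ of $V_\rr$-valued holomorphic functions; for $\rr$ trivial this is the usual operator with $Z_\Gamma(s)=\det(I-\mathcal L_s)$. For every $s\in\C$ the compact nesting of the disks makes $\mathcal L_{s,\rr}$ nuclear of order $0$ and $s\mapsto\mathcal L_{s,\rr}$ holomorphic, so $s\mapsto\det(I-\mathcal L_{s,\rr})$ is entire. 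For $\Re(s)$ large, $\|\mathcal L_{s,\rr}\|<1$ and Ruelle's trace formula gives $\mathrm{Tr}(\mathcal L_{s,\rr}^n)=\sum_{|w|=n}\mathrm{Tr}\bigl(\rr(r_\G(\gamma_w))\bigr)(\gamma_w'(z_w))^s/(1-\gamma_w'(z_w))$, the sum over periodic words $w$ of length $n$ with fixed point $z_w$; expanding $-\sum_n\tfrac1n\mathrm{Tr}(\mathcal L_{s,\rr}^n)$ and regrouping periodic words by their primitive root reproduces $\log L_\Gamma(s,\rr)$ on that half-plane. Hence $L_\Gamma(s,\rr)=\det(I-\mathcal L_{s,\rr})$ is entire, which is assertion $(1)$.

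For assertion $(3)$, since $\widetilde\Gamma\vartriangleleft\Gamma$ the coding of $\widetilde X$ is the $\G$-extension of that of $X$ (disks indexed by $\{1,\dots,N\}\times\G$), so under $H(\widetilde X)\cong H(X)\otimes\C[\G]$ the transfer operator of $\widetilde\Gamma$ is unitarily equivalent to $\mathcal L_{s,\lambda_\G}$ with $\lambda_\G$ the left regular representation. As $\C[\G]\cong\bigoplus_\rr V_\rr^{\oplus d_\rr}$ as $\G$-modules, $\mathcal L_{s,\lambda_\G}\cong\bigoplus_\rr\mathcal L_{s,\rr}^{\oplus d_\rr}$, whence
$$Z_{\widetilde\Gamma}(s)=\det(I-\mathcal L_{s,\lambda_\G})=\prod_{\rr}\det(I-\mathcal L_{s,\rr})^{d_\rr}=\prod_{\rr}L_\Gamma(s,\rr)^{d_\rr}.$$
Equivalently, one checks this directly from the Euler products in $\Re(s)>\delta$: the identity of class functions $\prod_\rr\det(I_{V_\rr}-t\,\rr(g))^{d_\rr}=\det(I_{\C[\G]}-t\,\lambda_\G(g))=(1-t^{\,\mathrm{ord}(g)})^{|\G|/\mathrm{ord}(g)}$ (left translation by $g$ being a product of $|\G|/\mathrm{ord}(g)$ cycles of length $\mathrm{ord}(g)$), together with the lifting dictionary — a primitive closed geodesic $\mathcal C$ on $X$ whose holonomy $r_\G(\gamma_\mathcal C)$ has order $n$ lifts to exactly $|\G|/n$ primitive closed geodesics on $\widetilde X$, each of length $n\,l(\mathcal C)$ — reorganizes $\prod_\rr L_\Gamma(s,\rr)^{d_\rr}$ into $\prod_{\widetilde{\mathcal C}}\prod_k(1-e^{-(s+k)l(\widetilde{\mathcal C})})=Z_{\widetilde\Gamma}(s)$. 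Both sides being entire (the left by the analytic continuation of Selberg zeta functions recalled above, applicable since $\widetilde\Gamma$ is again convex co-compact; the right by $(1)$) and agreeing on a half-plane, they agree on $\C$.

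The remaining and hardest point is the growth bound $(2)$. Using $L_\Gamma(s,\rr)=\det(I-\mathcal L_{s,\rr})$ and $|\det(I-T)|\le\prod_{n\ge1}(1+\mu_n(T))$ for the singular values $\mu_n$ of a trace-class $T$, everything reduces to controlling $\mu_n(\mathcal L_{s,\rr})$ in both $s$ and $d_\rr$. The weights $(\gamma_a'(\cdot))^s$ are bounded by $e^{C|\Re s|}\le e^{C|s|}$ uniformly on the disks, hence multiply every singular value by at most $e^{C|s|}$; the definite contraction of the nested disks makes the composition part have singular values $\lesssim\rho^n$ for some $\rho\in(0,1)$ depending only on $\Gamma$ (Taylor truncation); and tensoring with the unitary matrices $\rr(r_\G(\gamma_a))$ leaves the decay rate unchanged while multiplying multiplicities by $d_\rr$. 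Altogether $\mu_n(\mathcal L_{s,\rr})\le C\,e^{C|s|}\rho^{\,cn/d_\rr}$ with $C,c,\rho$ depending only on $\Gamma$, and summing,
$$\sum_{n\ge1}\log\bigl(1+\mu_n(\mathcal L_{s,\rr})\bigr)\le\sum_{n\ge1}\log\bigl(1+C e^{C|s|}\rho^{\,cn/d_\rr}\bigr)\le C'\,d_\rr\,(1+|s|^2),$$
using $\sum_{k\ge0}\log(1+a e^{C|s|}\rho^{k})\lesssim 1+|s|^2$; this yields the stated bound (absorbing, for $d_\rr=1$, the scalar bound for $Z_\Gamma$, and using $d_\rr\le d_\rr\log(1+d_\rr)$ when $d_\rr\ge2$). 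I expect this estimate to be the main obstacle: the delicate requirements are (a) the exponent $e^{C|s|}$ rather than $e^{C|s|^2}$ at the level of operator and singular-value norms — precisely what keeps $L_\Gamma(\cdot,\rr)$ of order $2$ — and (b) the linear-in-$d_\rr$ scaling of the singular-value decay, with all constants depending only on the base group $\Gamma$ and not on $\G$ or $\rr$; this uniformity is exactly what makes the bound usable in the large-degree limit. The argument adapts the strategy of Guillop\'e--Lin--Zworski for the scalar Selberg zeta function of convex co-compact groups to the $\rr$-twisted operator.
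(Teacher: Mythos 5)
Your proposal follows the same overall strategy as the paper: realize $L_\Gamma(s,\rr)$ as the Fredholm determinant of a $V_\rr$-valued twisted transfer operator $\lt_{\rr,s}$ on a direct sum of Bergman spaces attached to a Schottky coding, verify the Euler product via the Lefschetz/Bergman kernel trace computation in the half-plane $\Re(s)>\delta$, obtain (2) from singular-value decay with constants uniform in $\rr$, and obtain (3) from the decomposition of the regular representation together with the splitting of $\Gamma$-conjugacy classes in $\widetilde\Gamma$ governed by the order of the holonomy in $\G$. Parts (1) and (3) are essentially identical to the paper's; for (3) you additionally sketch an operator-level shortcut ($\lt_{s}$ for $\widetilde\Gamma$ $\cong$ $\lt_{\rr=\lambda_\G,s}$ as operators), which is a slightly cleaner packaging, but it rests on the unstated claim that the Schottky coding of $\widetilde X$ \emph{is} the $\G$-extension of that of $X$ — this is true but not automatic, and it is precisely what the paper avoids by working directly with the Euler product plus the centralizer computation in the free group. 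Your second, Euler-product version of (3) is the one that matches the paper.

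For (2) your plan is right in spirit, but the step ``tensoring with the unitary matrices $\rr(r_\G(\gamma_a))$ leaves the decay rate unchanged while multiplying multiplicities by $d_\rr$'' is a heuristic, not a proof: $\lt_{\rr,s}$ is not a tensor product of a single scalar composition operator with a single unitary, it is a sum over branches $a$, each contributing a different $\rr(\gamma_a)$, so the singular values do not simply replicate with multiplicity $d_\rr$. The paper proves the weaker bound $\mu_N(\lt_{\rr,s})\le Cd_\rr e^{C|s|}e^{-\eta N/d_\rr}$ (note the extra $d_\rr$ prefactor, which comes from the Gohberg--Krein-type estimate $\mu_{n+1}(T)\le\sum_{j\notin I}\Vert Te_j\Vert$ over the $2md_\rr$-fold vector-valued basis $\Psi_{j,\ell,k}$); your bound $\mu_n\le Ce^{C|s|}\rho^{cn/d_\rr}$ is stronger than what is established and would indeed give $O(d_\rr(1+|s|^2))$ without the $\log$ factor, but you would need to actually prove it. With the $d_\rr$-prefactored bound, the paper's splitting of $\sum_k\log(1+|\lambda_k|)$ at $N\asymp d_\rr(|s|+\log(d_\rr+1))$, combined with the uniform spectral radius bound $|\lambda_k|\le e^{P(\Re s)}$ and the linear-in-$\sigma$ bound $P(\sigma)\le a_0-b_0\sigma$, yields exactly the stated $O(d_\rr\log(1+d_\rr)(1+|s|^2))$. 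So the idea is correct, but tighten the singular-value step: either prove your sharper bound or fall back on the weaker (and easier) one and run the two-range splitting argument.
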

Notice that the $L$-function for the {\it trivial representation} is just $Z_\Gamma(s)$ and thus $Z_\Gamma(s)$ is always a factor of
$Z_{\widetilde{\Gamma}}(s)$.  There is a long story of L-functions associated with compact extensions of geodesic flows in negative curvature, see for example \cite{Sarnak,KatSun} and \cite{ParryPollicott1}. In the case of pairs of hyperbolic
pants with symmetries, a similar type of factorization has been considered for numerical purposes by Borthwick and Weich \cite{BW1}.
The above factorization is very similar to the factorization of Dedekind zeta functions as a product of Artin L-functions in the case of number fields. In the context of hyperbolic surfaces with infinite volume, although not surprising, the above statement is new and interesting in itself for various applications. We now describe our two main results which deal with two opposite cases, the first one when the Galois group $\G$ is abelian,  the other when $\G=SL_2({\mathbb F}_p)$, which is as far from abelian as possible.

\subsection{Abelian covers}\label{abelian_covers}

An efficient way to manufacture Abelian covers is to use the first homology group with integral coefficients, 
$$H^1(X,\Z)\simeq \Gamma / [\Gamma,\Gamma],$$
where $[\Gamma,\Gamma]$ is the commutator subgroup of $\Gamma$. Since $\Gamma$ is actually a free group \footnote{It's a pure fact of algebraic topology that the fundamental group of a non-compact surface with finite geometry is free,
see for example \cite{Stillwell}.}  on $m$ symbols (see $\S 2$ for the Schottky
representation in the convex co-compact case), then 
$$H^1(X,\Z)\simeq \Z^m.$$
Let us fix a surjective homomorphism $P:\Gamma \rightarrow \Z^m$, given a sequence of positive integers $N:=(N_1,N_2,\ldots,N_m)$ we obtain a surjective  map $\pi_N$ simply given by
$$\pi_N:\left \{ \Z^m\rightarrow \Z / N_1 \Z \times \Z / N_2 \Z \times \ldots \times \Z / N_m \Z \atop
x=(x_1,\ldots,x_m)\mapsto (x_1 \ \mathrm{mod}\ N_1,\ldots, x_m \ \mathrm{mod}\ N_m) \right. $$
One can then check that
$$\Gamma_N:= \mathrm{Ker}(\pi_N\circ P)$$
is indeed a normal subgroup with Galois group
$$\G=\Z / N_1 \Z \times \Z / N_2 \Z \times \ldots \times \Z / N_m \Z.$$ 
We will first prove the following fact.
\begin{thm}
\label{main4}
Assume that $X=\Gamma\backslash \hh$ has at least one cusp, and consider a sequence of Abelian covers with Galois group $\G_j$ as above
with $\vert \G_j \vert\rightarrow +\infty$. Then for all $\epsilon>0$, one can find $j$ such that $X_j=\Gamma_j\backslash \hh$ has at least one non-trivial
resonance $s$ with $\vert s-\delta\vert \leq \epsilon$. 
\end{thm}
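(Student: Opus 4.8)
The plan is to realise the wanted resonance as the bottom of the spectrum of the Laplacian on $X$ twisted by a character of the covering group that is close to the trivial one, using in an essential way that the presence of a cusp places $\delta$ strictly above $\half$. Under that hypothesis $s=\delta$ is an honest $L^2$-eigenvalue, and producing a nearby non-trivial resonance becomes a soft perturbation statement about a simple, isolated eigenvalue.

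First I would record the structural consequence of the hypothesis. Since $X$ has a cusp, $\Gamma$ contains a maximal parabolic subgroup $\langle p\rangle\cong\Z$, whose Poincar\'e series has critical exponent exactly $\half$; as $\Gamma$ is non-elementary and geometrically finite, this forces $\delta=\delta(\Gamma)>\half$ (see e.g.\ \cite{Patterson}). By the Lax--Phillips--Patterson description recalled above, $\delta(1-\delta)$ is then the bottom of the $L^2$-spectrum of $\Delta_X$, and it is a \emph{simple} eigenvalue, its eigenfunction (built from the Patterson--Sullivan density) being positive. Since $\Lambda(\Gamma_j)=\Lambda(\Gamma)$, the same holds on every cover: $\delta(1-\delta)$ is the simple bottom eigenvalue of $\Delta_{X_j}$, with $\G_j$-invariant ground state equal to the pull-back of the one on $X$.

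Next, for a unitary character $\chi\colon\Gamma\to U(1)$ let $L_\chi$ be the associated flat line bundle over $X$ and $\Delta_\chi$ the Laplacian on $L^2(X,L_\chi)$; its essential spectrum is $[1/4,\infty)$ for every $\chi$, and write $\lambda_0(\chi)=\inf\mathrm{spec}(\Delta_\chi)$. For the abelian cover $X_j$ one has the $\Delta_{X_j}$-invariant orthogonal decomposition $L^2(X_j)\cong\bigoplus_{\chi\in\widehat{\G_j}}L^2(X,L_\chi)$, so $\lambda_0(\chi)\ge\delta(1-\delta)$ for all $\chi$, with equality only for $\chi$ trivial, since the one-dimensional ground space of $\Delta_{X_j}$ lies in the trivial isotypic block. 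After a topological trivialisation of the (always trivial) bundle, the family $\chi\mapsto\Delta_\chi$ becomes an analytic family of self-adjoint operators near $\chi=\mathbf 1$, so Kato--Rellich produces a unique real-analytic eigenvalue branch through the simple eigenvalue $\delta(1-\delta)$, which must equal $\lambda_0(\chi)$ for $\chi$ near $\mathbf 1$. Hence for $\chi\neq\mathbf 1$ close to $\mathbf 1$ we have $\delta(1-\delta)<\lambda_0(\chi)<1/4$: a genuine discrete eigenvalue of $\Delta_{X_j}$ in the block $L^2(X,L_\chi)$, hence a resonance $s(\chi)$ of $X_j$ with $s(\chi)(1-s(\chi))=\lambda_0(\chi)$, $s(\chi)\in(\half,\delta)$, and $s(\chi)\to\delta$ as $\chi\to\mathbf 1$; in particular $s(\chi)\neq\delta$. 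Finally I produce such characters along the sequence: parametrising $\mathrm{Hom}(\Gamma,U(1))$ by $(\R/\Z)^m$ via the fixed surjection $P$, the characters of $\G_j=\prod_i\Z/N_i^{(j)}\Z$ are the points with $i$-th coordinate in $\tfrac1{N_i^{(j)}}\Z/\Z$; since $\prod_i N_i^{(j)}\to\infty$ we have $\max_i N_i^{(j)}\to\infty$, so along a subsequence some coordinate $i_0$ has $N_{i_0}^{(j)}\to\infty$, and $\chi_j:=\chi_{\theta_j}$ with $\theta_j=(N_{i_0}^{(j)})^{-1}e_{i_0}$ is a non-trivial character of $\G_j$ tending to $\mathbf 1$. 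Then $s(\chi_j)$ is a non-trivial resonance of $X_j$ with $|s(\chi_j)-\delta|\to0$, hence $\le\epsilon$ for $j$ large, which is the claim.

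I expect the only substantive work to be the perturbation step: realising $\Delta_\chi$ as a fixed-domain analytic family and checking, with uniformity as the covers vary, both that the twisted ground state depends analytically on $\chi$ and that near $\Re(s)>\half$ these twisted eigenvalues correspond exactly to resonances of $X_j$. This is precisely where the cusp enters: it forces $\delta>\half$, so that $s=\delta$ is an isolated $L^2$-eigenvalue and the argument is soft; for $\delta\le\half$ the resonance at $\delta$ may sit on or to the left of the critical line and the method fails. In the spirit of the rest of the paper one can run the argument instead through the factorization $Z_{\Gamma_j}(s)=\prod_{\chi}L_\Gamma(s,\chi)$ of Theorem~\ref{main1} (in its geometrically finite form): $s=\delta$ being a simple zero of $Z_{\Gamma_j}$ forces $L_\Gamma(\delta,\chi)\neq0$ for every non-trivial $\chi$, while the uniform bound of Theorem~\ref{main1}(2) with $d_\rr=1$ makes $\{L_\Gamma(\cdot,\chi)\}$ a normal family with $L_\Gamma(\cdot,\chi_j)\to Z_\Gamma$ locally uniformly, so Rouch\'e gives a zero of $L_\Gamma(\cdot,\chi_j)$ in the disk $|s-\delta|<\epsilon$, and it is $\neq\delta$ because $L_\Gamma(\delta,\chi_j)\neq0$.
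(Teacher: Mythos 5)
Your argument is correct, but it takes a genuinely different route from the paper's. The paper argues by contradiction: a uniform spectral gap for all $X_j$ would, by Fell's continuity of induction (Proposition~\ref{expanders and gap}, following Gamburd), force the Cayley graphs $\mathrm{Cay}(\G_j,S_j)$ to form an expander family, and the paper then verifies directly --- by diagonalising the discrete graph Laplacian with the characters of $\G_j$ and producing a nonzero eigenvalue tending to $0$ --- that Cayley graphs of abelian groups of unbounded order are never expanders, which kills the assumed gap. You instead construct the resonance: decompose $L^2(X_j)=\bigoplus_{\chi\in\widehat{\G_j}}L^2(X,L_\chi)$, observe that the simple bottom eigenvalue $\delta(1-\delta)$ lives entirely in the trivial block, and run Kato--Rellich perturbation theory on the analytic family $\chi\mapsto\Delta_\chi$ to obtain, for non-trivial $\chi\in\widehat{\G_j}$ near the identity, a discrete eigenvalue strictly between $\delta(1-\delta)$ and $\tfrac14$, hence a resonance $s(\chi)\in(\tfrac12,\delta)$ approaching $\delta$. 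The cusp enters both proofs through $\delta>\tfrac12$, which makes $\delta$ an isolated simple $L^2$-eigenvalue, but you exploit this via spectral perturbation of twisted Laplacians while the paper exploits it via the Duality Theorem and the Fell topology on the spherical unitary dual of $SL_2(\R)$. What your route buys is a concrete resonance and a rate of approach to $\delta$, and it is closer in spirit to the $L$-function analysis of Section~3; what the paper's route buys is a reusable dichotomy (uniform gap $\Rightarrow$ expanders) valid for arbitrary, not just abelian, cover families, which also frames the contrast with the congruence case where the gap really is uniform. One small caution: your block-decomposition argument gives $\lambda_0(\chi)>\delta(1-\delta)$ only for $\chi\in\widehat{\G_j}$, which is all you use but is worth stating explicitly, since the Kato--Rellich branch is defined for all $\chi$ near the identity. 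A second caution, on your appended Rouch\'e sketch: the locally-uniform convergence $L_\Gamma(\cdot,\chi_j)\to Z_\Gamma$ needs either the entire-function bound of Theorem~\ref{main1}(2) or an argument for joint holomorphy in $(s,\chi)$, and in the paper both are established only for convex co-compact Schottky groups, whereas Theorem~\ref{main4} is about the cusped case; the factorization extends to geometrically finite groups via \cite{PohlFedosova}, as you note, but the compact transfer operator and singular-value estimates behind claim~(2) do not carry over unmodified once parabolic elements are present.
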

In the case of compact hyperbolic surfaces, this is a known result proved in 1974 by Burton Randol \footnote{although there is no interpretation in terms of abelian covers in this early work.}
 \cite{Randol}. Note that in the compact case, it follows also from min-max techniques and the Buser inequality, see for example in the book of Bergeron \cite[Chapter~3]{Bergeron}. 
In the case of abelian covers of the {\it modular} surface, this fact was definitely first observed by Selberg, see in \cite{Selberg}. For more general compact manifolds, we mention the work of R. Brooks \cite{Brooks}
(based on Cheeger constant) which gives sufficient conditions on the fundamental group that guarantees existence of coverings with arbitrarily small spectral gaps.

The outline of the proof is (not surprisingly) as follows: since there is a cusp, we have $\delta>\half$ and resonances close to $\delta$ are actually $L^2$-eigenvalues. One can then use the fact that Cayley graphs of abelian groups are never expanders combined with some $L^2$ techniques  and Fell's continuity of induction to prove the result, following earlier ideas of Gamburd \cite{Gamburd1}. The proof of Theorem \ref{main4} is rather different than the rest of the paper and is found in the last section.

In the convex co-compact case, we can actually prove a much more precise result which goes as follows.
\begin{thm}
 \label{main3}
 Assume that $\Gamma$ is convex co-compact. 
 Let $X_j:=\Gamma_j \backslash \hh$ be a sequence of Abelian covers with Galois group $\G_j$ as above with $\vert \G_j \vert\rightarrow +\infty$ as 
 $j\rightarrow +\infty$. Then, up to a sequence extraction, there exists a small open set $\mathcal{U}$ with $\delta \in \mathcal{U}\subset \C$ such that for all $j$ large we have $\mathcal{R}_{X_j}\cap \mathcal{U}\subset \R$. 
 Morevover, for all test functions
 $\varphi \in C_0^\infty(\mathcal{U})$, we have
 $$ \lim_{j\rightarrow +\infty} \frac{1}{\vert \G_j \vert} \sum_{\lambda \in \mathcal{R}_{X_j}\cap \mathcal{U}} \varphi(\lambda)=\int_{I} \varphi d\mu,$$ where $\mu$ is a finite positive measure which is absolutely continuous  with respect to Lebesgue on an interval $I=[a,\delta]$ for some $a<\delta$. 
\end{thm}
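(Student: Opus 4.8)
The plan is to read $\mathcal{R}_{X_j}$ near $\delta$ off the factorization of Theorem~\ref{main1}(3). Since $\G_j$ is abelian, all its irreducible representations are characters $\chi\in\widehat{\G_j}$, so $Z_{\Gamma_j}(s)=\prod_{\chi\in\widehat{\G_j}}L_\Gamma(s,\chi)$. Identifying $\chi$ with the point $\theta_\chi\in\mathbb{T}^m=(\R/\Z)^m$ defined through the fixed surjection $P:\Gamma\to\Z^m$, introduce the two–variable twisted zeta
$$Z_\Gamma(s,\theta):=\prod_{\mathcal{C}\in\mathcal{P}}\prod_{k\in\N}\left(1-e^{2\pi i\langle\theta,P(\mathcal{C})\rangle}e^{-(s+k)l(\mathcal{C})}\right),$$
so that $L_\Gamma(s,\chi)=Z_\Gamma(s,\theta_\chi)$ and $Z_\Gamma(s,0)=Z_\Gamma(s)$. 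By the machinery behind Theorem~\ref{main1} (Fredholm determinants of twisted Ruelle transfer operators $\mathcal{L}_{s,\theta}$ for a Schottky coding of $\Gamma$), $Z_\Gamma(\cdot,\cdot)$ is entire in $(s,\theta)\in\C\times\C^m$, its zeros in $s$ coincide with those of $\det(I-\mathcal{L}_{s,\theta})$, and $s=\delta$ is a \emph{simple} zero of $Z_\Gamma(\cdot,0)$ (Patterson--Perry). Thus, counted with multiplicity, $\mathcal{R}_{X_j}\cap\mathcal{U}$ is the zero set in $\mathcal{U}$ of $\prod_{\chi}Z_\Gamma(\cdot,\theta_\chi)$, and the cover enters only through the finite subgroup $S_j:=\{\theta_\chi:\chi\in\widehat{\G_j}\}\subset\mathbb{T}^m$ of order $\vert\G_j\vert$.

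First I would fix, \emph{depending only on $\Gamma$}, a small disk $\mathcal{U}=D(\delta,r)$ symmetric about $\R$ and containing no other zero of $Z_\Gamma$, together with a ball $W=B(0,\eta)\subset\mathbb{T}^m$, such that: (i) for $\theta\in W$ the function $Z_\Gamma(\cdot,\theta)$ has in $\mathcal{U}$ exactly one, simple, zero $u(\theta)$, and $\theta\mapsto u(\theta)$ is real-analytic with $u(0)=\delta$; (ii) for $\theta\in\mathbb{T}^m\setminus W$, $Z_\Gamma(\cdot,\theta)$ has no zero in $\mathcal{U}$. Part~(i) is Weierstrass preparation at $(\delta,0)$ plus Hurwitz. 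For~(ii), the triangle inequality gives $\mathrm{spr}(\mathcal{L}_{\delta,\theta})\le\mathrm{spr}(\mathcal{L}_{\delta,0})=1$, strictly for $\theta\neq0$ because $\langle\theta,\phi\rangle$ ($\phi$ the homology cocycle of the coding) is not a coboundary — some generator $g_i$, hence some primitive closed geodesic $\mathcal{C}$, has $P(\mathcal{C})=e_i$ and $e^{2\pi i\langle\theta,e_i\rangle}\neq1$ — so by continuity and compactness $\mathrm{spr}(\mathcal{L}_{s,\theta})\le1-\varepsilon<1$ for $s$ near $\delta$ and $\theta\notin W$, hence no zeros there. It remains to locate $u$. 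Since each oriented closed geodesic appears together with its time reverse (opposite homology class, equal length), $Z_\Gamma(s,\theta)=Z_\Gamma(s,-\theta)$; together with the elementary $\overline{Z_\Gamma(\bar s,\theta)}=Z_\Gamma(s,-\theta)$ this yields $\overline{Z_\Gamma(\bar s,\theta)}=Z_\Gamma(s,\theta)$, so $Z_\Gamma(\cdot,\theta)$ has real Taylor coefficients and, $u(\theta)$ being its unique zero in the conjugation-symmetric set $\mathcal{U}$, $u(\theta)\in\R$. The same symmetry makes $u$ even, so $\nabla u(0)=0$; second-order perturbation theory for the leading eigenvalue of $\mathcal{L}_{s,\theta}$ then gives $u(\theta)=\delta-\langle\theta,A\theta\rangle+O(\vert\theta\vert^3)$ with $A$ a positive multiple of the asymptotic covariance matrix $\Sigma$ of $\phi$, and $\Sigma$ is positive definite (if some $v\neq0$ had $\langle v,\phi\rangle$ cohomologous to a constant, then $\langle v,P(\mathcal{C})\rangle$ would be proportional to $l(\mathcal{C})$ for all $\mathcal{C}$, impossible for a non-elementary Schottky group). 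Hence $A>0$, $u<\delta$ on $W\setminus\{0\}$, and (shrinking $W$) $u$ has a non-degenerate interior maximum at $0$. Consequently $\mathcal{R}_{X_j}\cap\mathcal{U}=\{u(\theta):\theta\in S_j\cap W\}\subset[a,\delta]\subset\R$ with $a:=\inf_W u$, which proves the first assertion for all $j$.

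For the limiting measure, set $\nu_j:=\vert\G_j\vert^{-1}\sum_{\theta\in S_j}\delta_\theta$, the Haar measure of the finite subgroup $S_j\le\mathbb{T}^m$, and pass to a subsequence with $\nu_j\rightharpoonup\nu$. Being a weak-$\ast$ limit of subgroup Haar measures, $\nu$ is the Haar measure of a closed subgroup $H\le\mathbb{T}^m$, necessarily infinite since $\vert\G_j\vert\to\infty$, so its identity component $H_0$ is a subtorus of some dimension $d\ge1$. Choosing the radius $\eta$ of $W$ among the co-countably many values with $\nu(\partial W)=0$, and using $H\cap W=H_0\cap W$, for every $\varphi\in C_0^\infty(\mathcal{U})$ we have
$$\frac{1}{\vert\G_j\vert}\sum_{\lambda\in\mathcal{R}_{X_j}\cap\mathcal{U}}\varphi(\lambda)=\frac{1}{\vert\G_j\vert}\sum_{\theta\in S_j\cap W}\varphi(u(\theta))=\int_{\mathbb{T}^m}G\,d\nu_j+o(1),$$
where $G:=(\varphi\circ u)\,\mathbf{1}_{\overline{W}}$ is bounded with discontinuity set inside $\partial W$; by the portmanteau theorem the right-hand side tends to $\int_{\overline{W}}(\varphi\circ u)\,d\nu=\int\varphi\,d\mu$ with $\mu:=u_\ast(\nu|_{W\cap H_0})$, a finite positive measure (nonzero since $\nu(W)>0$) supported in $[a,\delta]$. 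Finally $\mu$ is absolutely continuous: near $0$, $u|_{H_0}$ equals $\delta$ minus the quadratic form $\langle\cdot,A\cdot\rangle$ restricted to $\mathrm{Lie}(H_0)$ — positive definite there since $\Sigma>0$ — plus higher order terms, so (after shrinking $W$) $u|_{H_0\cap W}$ is a Morse function with a single, non-degenerate, critical point at $0$ and is a submersion elsewhere; pushing the smooth density $\nu|_{W\cap H_0}$ forward by such a map gives, via the coarea formula, an absolutely continuous measure on the interval $I=[a,\delta]$ (with locally integrable density, comparable to $(\delta-\lambda)^{d/2-1}$ near $\lambda=\delta$).

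The step I expect to be the main obstacle is pinning down this local model of $u$ on the limiting torus. Concretely one must (a) pass from the Fredholm-determinant description of Theorem~\ref{main1} to the expansion $u(\theta)=\delta-\langle\theta,A\theta\rangle+O(\vert\theta\vert^3)$ and identify $A$ with the homology central-limit variance — this requires controlling the leading eigenvalue of the twisted transfer operator uniformly near $(\delta,0)$ — and (b) guarantee that $d=\dim H_0\ge1$ and that $A$ stays non-degenerate on $\mathrm{Lie}(H_0)$, so that the push-forward is genuinely absolutely continuous rather than, say, a point mass at $\delta$; this is precisely where $\vert\G_j\vert\to\infty$, the non-elementarity of $\Gamma$, and the passage to a subsequence are used. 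The remaining ingredients — the spectral gap $\mathrm{spr}(\mathcal{L}_{s,\theta})<1$ off $W$, the time-reversal and complex-conjugation symmetries forcing $u$ to be real, and the portmanteau/boundary bookkeeping — are routine.
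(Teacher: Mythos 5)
Your argument reproduces the paper's proof in its overall architecture --- factor $Z_{\Gamma_j}$ into twisted $L$-functions, show they vanish at $s=\delta$ only for the trivial character (the paper's Proposition~\ref{nonv}, which you recover via essentially the same aperiodicity argument together with the fact that $P(\gamma_1),\ldots,P(\gamma_m)$ form a $\Z$-basis of $\Z^m$), apply the holomorphic implicit function theorem to produce the branch $u(\theta)$, force $u$ to be real and even by the time-reversal and conjugation symmetries, and verify non-degeneracy of the Hessian at $0$ (you via the homological covariance matrix of the Schottky coding, the paper via the non-lattice property of the length spectrum following Parry--Pollicott: two phrasings of the same thermodynamic fact). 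Where you genuinely diverge is the equidistribution step. The paper re-indexes the subsequence so that $\G_j=\Z/N_1^{(j)}\Z\times\cdots\times\Z/N_r^{(j)}\Z\times(\text{fixed factors})$ with $\min_i N_i^{(j)}\to\infty$, so that the relevant characters form a scaled lattice in a small box of $\R^r$, and proves convergence to Lebesgue directly by the Poisson summation formula. You instead take a weak-$\ast$ convergent subsequence of the uniform measures $\nu_j$ on the dual subgroups $S_j\subset\mathbb{T}^m$ and assert that the limit $\nu$ is Haar measure on a closed subgroup $H\leq\mathbb{T}^m$; this is the one step you should actually supply, since it is the pivot of your averaging argument. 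It is an idempotent-measure statement: $\widehat{\nu_j}$ is $\{0,1\}$-valued on $\Z^m$ (indicator of the annihilator $S_j^\perp$), hence so is $\widehat{\nu}$, hence $\nu*\nu=\nu$, and an idempotent probability measure on a compact group is Haar measure on a closed subgroup; combined with $\vert\G_j\vert\to\infty$ this forces $\dim H_0\geq 1$, which is exactly what you need so that the push-forward is genuinely absolutely continuous rather than a Dirac mass at $\delta$. Your route is slightly more robust --- it would apply verbatim to any sequence of finite abelian quotients, without requiring the explicit cyclic-product presentation $\Z/N_1\Z\times\cdots\times\Z/N_m\Z$ --- while the paper's Poisson-summation route is more explicit and yields quantitative rates. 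Both push the smooth density on $H_0\cap W$ forward by the Morse function $u\vert_{H_0}$ and arrive at the same local density shape $(\delta-\lambda)^{r/2-1}$ near $\delta$, with $r=\dim H_0$, so the same $\mu$.
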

\begin{itemize}

\item The absolutely continuous measure $\mu$ 
depends dramatically on the sequence of covers: a more detailed description of the density is provided in $\S 3$. 
\item Since $\delta$ belongs to the support of $\mu$, a simple approximation argument shows that for all $\varepsilon>0$ small enough, we have as $j\rightarrow +\infty$,
$$\#\{ \lambda \in \mathcal{R}_{X_j}\ :\ \vert \lambda-\delta \vert <\varepsilon\}\sim C_\varepsilon \vert \G_j \vert,$$
for some constant $C_\varepsilon>0$.
\item Another obvious corollary is that for all $\epsilon >0$ one can find a finite Abelian cover $X_j$ of $X$ such that $X_j$ has a non trivial resonance $\epsilon$-close to $\delta$. Both Theorems \ref{main4} and \ref{main3} fully cover the case of all geometrically finite surfaces. We have existence of surfaces with arbitrarily small spectral gap, which was {\it not known so far}. \item Note that the non-trivial resonances obtained here are real: for $\delta>\half$, this is clear because when close enough to $\delta$ they are actually $L^2$-eigenvalues. However when $\delta\leq \half$, this is not an obvious fact.
\item In the general context of scattering theory on spaces with negative curvature, it is to our knowledge the first {\it exact} asymptotic result on the distribution of resonances, apart from the "trivial" cases of elementary groups or cylindrical manifolds where resonances can be explicitly computed.  For a review of the current knowledge on counting results for resonances, we refer to the recent exhaustive survey of Zworski \cite{Zworski}.
\item By using the techniques of \cite{Naud2,OhWinter}, it is likely that one can replace the small neighborhood $\mathcal{U}$ by a thin uniform strip 
$\{ \vert \Re(s)-\delta \vert \leq \varepsilon\}$ for some $\varepsilon>0$. One would need to show uniform (with respect to the cover) "essential spectral gaps" which is
something that has been achieved for congruence covers in \cite{OhWinter}. This should be pursued elsewhere.
\end{itemize}
The proof mostly uses thermodynamical formalism and $L$-functions to analyse carefully the contribution of $L$-factors related to characters which are close to
the identity. In particular we use in a fundamental way dynamical $L$-functions related to characters of $\Z^m$ and their representation as Fredholm
determinants of suitable transfer operators, see $\S 3$. We point out that using the coding available for compact hyperbolic surfaces \cite{Pollicott1}, the proof of the above equidistribution
results carries through without modification in the compact case which is to our knowledge also new (though less surprising). In \cite{Randol}, it was shown that the number of small eigenvalues in $[0,1/4]$ can be as large as wanted by moving to a finite Abelian cover. However his technique based on the "twisted" trace formula prevented him from investigating further the distribution of these small eigenvalues.

\subsection{Congruence subgroups}
Let $\Gamma$ be an infinite index, finitely generated, free subgroup of $SL_2(\Z)$, without parabolic elements. Because $\Gamma$ is free, the projection map
$\mathcal{\pi}:SL_2(\R)\rightarrow PSL_2(\R)$ is injective when restricted to $\Gamma$ and we will thus identify $\Gamma$ with $\pi(\Gamma)$, i.e.
with its realization as a Fuchsian group. Under the above hypotheses, $\Gamma$ is a convex co-compact group of isometries. For all $p>2$ a prime number, we define the congruence subgroup $\Gamma(p)$ by 
$$\Gamma(p):=\{ \gamma \in \Gamma \ :\ \gamma \equiv \mathrm{Id}\ \mathrm{mod}\ p\},$$
and we set $\Gamma(0)=\Gamma$.
Recently, these "infinite index congruence subgroups" have attracted a lot of attention because of the key role they play in number theory and graph theory. We mention the early work of Gamburd \cite{Gamburd1} and the more recent by Bourgain-Gamburd-Sarnak \cite{BGS}, Bourgain-Kontorovich \cite{Kontorovich1} and Oh-Winter 
\cite{OhWinter}. In all of the previously mentioned works, the spectral theory of surfaces
$$X_p:=\Gamma(p)\backslash \hh,$$
plays a critical role and knowledge on resonances is mandatory.
It should be stressed at this point that unlike in the case of Abelian covers treated above, there is a {\it uniform spectral gap} as $p\rightarrow +\infty$,
see \cite{Gamburd1, BGS, OhWinter}, so it's a completely different situation where the non-commutative nature of $\G$ makes it much more difficult to exhibit
new non-trivial resonances in the large $p$ limit.

In \cite{JN1}, the authors have started investigating the behaviour of resonances 
in {\it the large $p$ limit} and the present paper goes in the same direction with different techniques involving sharper tools of representation theory.

Note that it is known from Gamburd \cite{Gamburd1}, that the map 
$$\pi_p:\left \{ \Gamma \rightarrow  SL_2(\Fp) \atop \gamma \mapsto \gamma\ \mathrm{mod}\ p \right.$$
is onto for all $p$ large, and we thus have a family of Galois covers $X_p\rightarrow X$ with Galois group $\G=SL_2(\Fp)$. 
In \cite{JN1}, by combining trace formulae techniques with some a priori upper bounds for $Z_{\Gamma(p)}(s)$ obtained via transfer operator techniques,
we proved the following fact. For all $\epsilon>0$, there exists $C_\epsilon>0$ such that for all $p$ large enough,  
$$C_\epsilon^{-1} p^3\leq \#\mathcal{R}_{X_p}\cap \{ \vert s\vert \leq (\log(p))^\epsilon\}\leq C_\epsilon p^3 (\log(p))^{1+2\epsilon}.$$
We point out that $p^3\asymp \mathrm{Vol}(N_p)$, where $\mathrm{Vol}(N_p)$ is the volume of the convex core of $X_p$, therefore these bounds can be thought as a {\it Weyl law} in the large $p$ regime. 

In the case of covers of compact or finite volume manifolds, after the pioneering work of Heinz Huber \cite{Huber},
precise results for the Laplace spectrum in the "large degree" limit have been obtained in the past
in \cite{Degeorge, Donnelly}. We also mention the recent work \cite{LM} where a precise asymptotic is proved for sequences of compact hyperbolic surfaces. In the case of infinite volume hyperbolic manifolds, we also mention the density bound obtained by Oh \cite{Oh1}.

While this result has near optimal upper and lower bounds, it does not provide a lot of information
on the precise location of non trivial resonances. The second main result of this paper is as follows.

\begin{thm}
\label{main2}
Using the above notations, assume that $\delta>\frac{3}{4}$. Then for all $\epsilon,\beta>0$, and
for all $p$ large,
$$\# \mathcal{R}_{X_p}\cap \left \{ \delta-\textstyle{\frac{3}{4}}-\epsilon\leq \Re(s)\leq \delta\ \mathrm{and}\ 
\vert \Im(s) \vert \leq \left (\log(\log(p))\right)^{1+\beta}\right \} \geq \frac{p-1}{2}.$$
\end{thm}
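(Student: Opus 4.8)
\bigskip

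\noindent\textit{Proof proposal.} The plan is to turn Theorem~\ref{main2} into a single zero-localisation statement by means of the factorisation of Theorem~\ref{main1}(3). Set
$$\D_p:=\Bigl\{\, s\in\C\ :\ \delta-\tfrac34-\epsilon\leq\Re(s)\leq\delta,\ \ |\Im(s)|\leq(\log\log p)^{1+\beta}\,\Bigr\}.$$
Fix $\epsilon<\delta-\tfrac34$, which is legitimate by the hypothesis $\delta>\tfrac34$; then $\D_p\subset\{\Re(s)>0\}$, so every zero of $Z_{\Gamma(p)}$ lying in $\D_p$ is non-trivial (the topological zeros sit at non-positive integers), hence by Patterson--Perry is a resonance of $X_p$ counted with multiplicity. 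Since $\delta>\half$, the bottom of the pure point spectrum of $\Delta_{X_p}$ is $\delta(1-\delta)$, and Gamburd's uniform spectral gap for the congruence family forces this eigenvalue to be \emph{simple} for $p$ large; therefore $Z_{\Gamma(p)}$ has a simple zero at $s=\delta$, and comparing orders of vanishing at $\delta$ in $Z_{\Gamma(p)}(s)=Z_\Gamma(s)\prod_{\rr\neq1}L_\Gamma(s,\rr)^{d_\rr}$ (all factors entire, $Z_\Gamma$ itself having a simple zero there) gives $L_\Gamma(\delta,\rr)\neq0$ for every non-trivial $\rr$. Finally, for $p\geq5$ the smallest dimension of a non-trivial irreducible representation of $SL_2(\Fp)$ is $\tfrac{p-1}{2}$. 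Hence it suffices to exhibit, for each large $p$, one non-trivial irreducible $\rr_p$ of $SL_2(\Fp)$ and one $s_p\in\D_p$ with $L_\Gamma(s_p,\rr_p)=0$: then $s_p\neq\delta$, $s_p$ is a non-trivial zero of $Z_{\Gamma(p)}$ of multiplicity $\geq d_{\rr_p}\geq\tfrac{p-1}{2}$, and $\#\bigl(\mathcal{R}_{X_p}\cap\D_p\bigr)\geq\tfrac{p-1}{2}$.

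To produce the zero $s_p$ I would work with the thermodynamic formalism of $\S2$, in which $L_\Gamma(s,\rr)$ is realised as a Fredholm determinant $\det(I-\lt_{s,\rr})$ of a twisted Ruelle transfer operator acting on holomorphic functions over the \emph{fixed} Schottky disks of $\Gamma$, with $d_\rr\times d_\rr$ matrix weights assembled from $\rr$. For the trivial representation one recovers the scalar transfer operator $\lt_s$ of $X$, whose leading eigenvalue $\lambda_0(s)$ is real-analytic in $s$, equals $1$ exactly at $s=\delta$, has modulus $<1$ on $\{\Re(s)\geq\delta\}\setminus\{\delta\}$ (using non-arithmeticity of the length spectrum), and satisfies $\lambda_0(\delta-\tfrac34-\epsilon)>1$ by Bowen's formula together with the strict negativity of $\frac{d}{ds}\log\lambda_0$. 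The representation $\rr_p$ should be chosen so that $\lt_{s,\rr_p}$ carries a distinguished, isolated eigenvalue $\lambda_p(s)$, analytic in $s$ on $\D_p$, whose behaviour can be compared with $\lambda_0(s)$; the natural candidates are the principal series $\mathrm{Ind}_{B}^{SL_2(\Fp)}\psi$, with $B$ the Borel and $\psi$ a character of the split torus $\simeq\Z/(p-1)\Z$ of order $>2$. These are irreducible of dimension $p+1\geq\tfrac{p-1}{2}$, and by the inductivity of dynamical $L$-functions $L_\Gamma(s,\rr_p)=L_{\Gamma_B}(s,\psi)$ is the abelian $L$-function of the degree-$(p-1)$ cyclic cover $\Gamma_U\backslash\hh\to\Gamma_B\backslash\hh$ attached to $\psi$, where $\Gamma_B,\Gamma_U$ denote the preimages in $\Gamma$ of $B$ and of its unipotent radical.

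Granting such a $\lambda_p(s)$ with $|\lambda_p(\delta-\tfrac34-\epsilon)|>1$, with $|\lambda_p(s)|<1$ on $\{\Re(s)\geq\delta\}$, and with its phase along the curve $\{|\lambda_p(s)|=1\}$ under control, an argument-principle argument applied to $\lambda_p(s)-1$ on a rectangle inside $\D_p$ produces a solution $s_p$ of $\lambda_p(s_p)=1$, i.e.\ a zero of $\det(I-\lt_{s,\rr_p})=L_\Gamma(\cdot,\rr_p)$, with $\Re(s_p)\geq\delta-\tfrac34-\epsilon$ and $|\Im(s_p)|\leq(\log\log p)^{1+\beta}$. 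The a priori growth bound of Theorem~\ref{main1}(2) enters at this point to convert the eigenvalue equation into the zero-counting conclusion.

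The main obstacle — and the place where the heart of the proof lies — is \emph{uniformity in $p$}. The intermediate group $\Gamma_B$ is itself a cover of $X$ of degree $p+1\to+\infty$, so the perturbation theory of $\S3$, which is tailored to a fixed base, does not apply directly; one must keep the fixed Schottky coding of $\Gamma$ and analyse $\lt_{s,\rr_p}$ as a transfer operator whose matrix weights have growing size $d_{\rr_p}=p+1$. Establishing the existence, isolation and location of the distinguished eigenvalue $\lambda_p(s)$ then reduces to estimating the twisted traces $\mathrm{Tr}\,\rr_p(\gamma)$ over Schottky words $\gamma$ uniformly in $p$, which is exactly where the sharper representation theory of $SL_2(\Fp)$ is needed: the explicit principal-series character formulae (equivalently, Deligne--Lusztig theory) provide the Gauss-sum type cancellation that makes these estimates uniform. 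The strength one can extract from them controls how far to the left of $\delta$ the curve $\{|\lambda_p|=1\}$ can be guaranteed to reach — this is the source of the constant $\tfrac34$ and hence of the hypothesis $\delta>\tfrac34$ — while the resulting control on the phase of $\lambda_p$ is what limits the precision on $|\Im(s_p)|$ to $(\log\log p)^{1+\beta}$. I expect verifying these uniform spectral estimates for the growing family $\lt_{s,\rr_p}$, and checking that $\lambda_p$ stays isolated throughout $\D_p$, to be the most delicate part of the argument.
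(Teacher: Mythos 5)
Your reduction step is sound and matches the paper's setup: since every non-trivial irreducible of $SL_2(\Fp)$ has dimension at least $\tfrac{p-1}{2}$, and by Patterson's simplicity of the bottom eigenvalue $L_\Gamma(\delta,\rr)\neq 0$ for $\rr\neq 1$, it suffices to find a single non-trivial $\rr$ whose $L$-function vanishes inside the rectangle. But the mechanism you then propose for producing that zero is both different from the paper's and essentially not carried out.

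The paper does not try to locate a zero by tracking a distinguished eigenvalue of a specific $\lt_{s,\rr}$. Instead it argues by contradiction via an \emph{average over all irreducibles}: assuming every non-trivial $L_\Gamma(s,\rr)$ is zero-free in the box, Proposition~\ref{ExplicitF} (an explicit formula obtained by contour deformation against a Beurling--Malliavin type test function, whose admissible Fourier decay is the sole source of the $(\log\log p)^{1+\beta}$ window) forces each smoothed sum $I(\rr,T)$ over closed geodesics of length $\lesssim T=\beta\log p$ to be $O(d_\rr\log d_\rr\, e^{(\sigma+\varepsilon)T})$. Summing $|I(\rr,T)|^2$ over all $\rr$ and using the identity $\chi_\rr(x)\overline{\chi_\rr(y)}=\frac{d_\rr}{|\G|}\sum_g\chi_\rr(xgy^{-1}g^{-1})$ together with $\sum_\rr d_\rr\chi_\rr=|\G|\,\mathcal{D}_e$ converts $S(p)=\sum_\rr|I(\rr,T)|^2$ into a weighted count of pairs of closed geodesics whose frame elements are conjugate mod $p$. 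Proposition~\ref{Conj1} shows that for $\beta<2$ this condition is equivalent to equality of traces in $SL_2(\Z)$, so $S(p)\gtrsim (p-1)\sum_t m(t)^2$. Selberg's Cauchy--Schwarz trick then gives $\sum_t m(t)^2\gtrsim e^{(2\delta-\frac12-\varepsilon)T}$, while the assumed zero-free regions plus $\sum_\rr d_\rr^2=|\G|\asymp p^3$ give the upper bound $S(p)\lesssim e^{(2\delta+\varepsilon)T}+p^{3+\varepsilon}e^{2(\sigma+\varepsilon)T}$. Optimising $\beta\to 2$ produces a contradiction precisely when $\sigma<\delta-\tfrac34$; this is where the constant $\tfrac34$ and the hypothesis $\delta>\tfrac34$ come from.

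Your alternative — picking the principal series $\rr_p=\mathrm{Ind}_B^{SL_2(\Fp)}\psi$, reducing $L_\Gamma(s,\rr_p)$ to an abelian $L$-function over $\Gamma_B$, and hunting for a zero via a distinguished eigenvalue $\lambda_p(s)$ of the twisted transfer operator — is a genuine gap, not a proof. You correctly identify the obstruction yourself: $\Gamma_B$ is a cover of degree $p+1\to\infty$, so the fixed-base perturbation theory of $\S 3$ does not apply, and nothing in $\S 2$ guarantees that $\lt_{s,\rr_p}$ has an isolated leading eigenvalue, let alone one whose $|\lambda_p|=1$ curve can be pushed to $\Re(s)=\delta-\tfrac34$ uniformly in $p$. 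There is also no visible reason that Gauss-sum cancellation in the twisted traces, even if established, would produce precisely the exponent $\tfrac34$; in the paper this constant is a bookkeeping consequence of $\dim SL_2(\Fp)\asymp p^3$ and the $e^{T/2}$ integer-trace count, not of any spectral estimate. Finally, your argument-principle step presupposes control of the phase of $\lambda_p$ along a level curve, which you state as a desideratum rather than prove. The proposal therefore outlines an interesting alternative strategy but leaves precisely the hard part — uniform spectral analysis of a growing family of matrix-weighted transfer operators — unresolved, and it is not at all clear that it could recover the stated exponent.

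Two smaller points worth noting: (i) the strip $[\delta-\tfrac34-\epsilon,\delta]$ need not sit inside $\{\Re(s)>0\}$ for all $\epsilon$ (you imposed $\epsilon<\delta-\tfrac34$, which is fine to assume WLOG, but it should be stated), and in any case the trivial zeros of $Z_{\Gamma(p)}$ lie at $-\N_0$ with controlled multiplicity, so for $\epsilon$ small this is not an issue; (ii) your claim $L_\Gamma(\delta,\rr)\neq0$ for non-trivial $\rr$ is not actually invoked in the paper's proof and is not needed to get the count $\geq\tfrac{p-1}{2}$, since a single non-trivial $\rr$ with $L_\Gamma(s_0,\rr)=0$ already contributes $d_\rr\geq\tfrac{p-1}{2}$ to $\mathcal{R}_{X_p}$ regardless of whether $s_0=\delta$.
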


\bigskip
\begin{itemize}
\item Existence of convex co-compact subgroups $\Gamma$ of $SL_2(\Z)$ with $\delta_\Gamma$ arbitrarily close to $1$ is guaranteed by a theorem
of Lewis Bowen \cite{LewisBowen}. See also \cite{Gamburd1} for some hand-made examples. 
\item The point of Theorem \ref{main2} is that we manage to produce non-trivial resonances without having to affect $\delta$, just by moving to a finite cover, and despite the uniform spectral gap. In that sense, our result is somehow complementary to the spectral gap obtained by Gamburd \cite{Gamburd1}.
\item It would be interesting to know if the $\log\log$ bound can be improved to a constant, but this should require different techniques (see the remarks at end of the main proof).  
\item It is rather clear to us that the methods of proof are robust enough
to allow extensions to more general subgroups of arithmetic groups, in the spirit of the recent work of Magee \cite{Magee}, as long as some knowledge of the group structure of the Galois group $\G$ is available (see $\S 5$.) 
\end{itemize}
\bigskip
The outline of the proof is as follows. Having established the factorization formula, we first notice that since the dimension of any non trivial representation of $\G$ is at least $\textstyle{\frac{p-1}{2}}$, it is enough to show that at least one of the $L$-functions
$L_\Gamma(s,\rr)$ vanishes in the described region as $p\rightarrow \infty$. We achieve this goal through an averaging technique (over irreducible $\rr$)
which takes in account the "explicit" knowledge of the conjugacy classes of $\G$, 
together with the high multiplicities in the length spectrum of $X$. Unlike in finite volume cases where one can 
take advantage of a precise location of the spectrum (for example by assuming GRH), none of this strategy applies here which makes it much harder to mimic existing techniques from analytic number theory.

\bigskip
{\noindent \bf Acknowledgements}. 
Dima Jakobson and Fr\'ed\'eric Naud are supported by ANR grant "GeRaSic". DJ was partially supported by NSERC, FRQNT and Peter Redpath fellowship. FN is supported by Institut Universitaire de France. We all thank Anke Pohl for many helpful discussions.

\section{Vector valued transfer operators and analytic continuation}

\subsection{Bowen-Series coding and transfer operator}
The goal of this section is to prove Theorem \ref{main1}. The technique follows closely previous works \cite{Naud1,JN1} with the notable addition that
we have to deal with vector valued transfer operators. We start by recalling Bowen-Series coding and holomorphic function spaces needed for our analysis. Let $\hh$ denote the Poincar\'e upper half-plane 
$$\hh=\{ x+iy\in \C\ :\ y>0\}$$
endowed with its standard metric of constant curvature $-1$
$$ds^2=\frac{dx^2+dy^2}{y^2}.$$ 
The group of isometries of $\hh$ is $\mathrm{PSL}_2(\R)$ through the action of 
$2\times 2$ matrices viewed as M\"obius transforms
$$z\mapsto \frac{az+b}{cz+d},\ ad-bc=1.$$ 
Below we recall the definition of Fuchsian Schottky groups which will be used to define transfer operators.
A Fuchsian Schottky group is a free subgroup of $\mathrm{PSL}_2(\R)$ built as follows. Let $\D_1,\ldots, \D_m,\D_{m+1},\ldots, \D_{2m}$, $m\geq 2$, 
be $2m$ Euclidean {\it open} discs in $\C$ orthogonal to the line $\R\simeq \partial \hh$. We assume that for all $i\neq j$, $\overline{\D_i} \cap \overline{\D_j}=\emptyset$. 
Let $\gamma_1,\ldots,\gamma_m \in \mathrm{PSL}_2(\R)$ be $m$ isometries such that for all $i=1,\ldots,m$, we have
$$\gamma_i(\D_i)=\widehat{\C}\setminus \overline{\D_{m+i}},$$
where $\widehat{\C}:=\C\cup \{ \infty \}$ stands for the Riemann sphere. For notational purposes, we also set $\gamma_i^{-1}=:\gamma_{m+i}$.
\begin{center}
\includegraphics[scale=0.65]{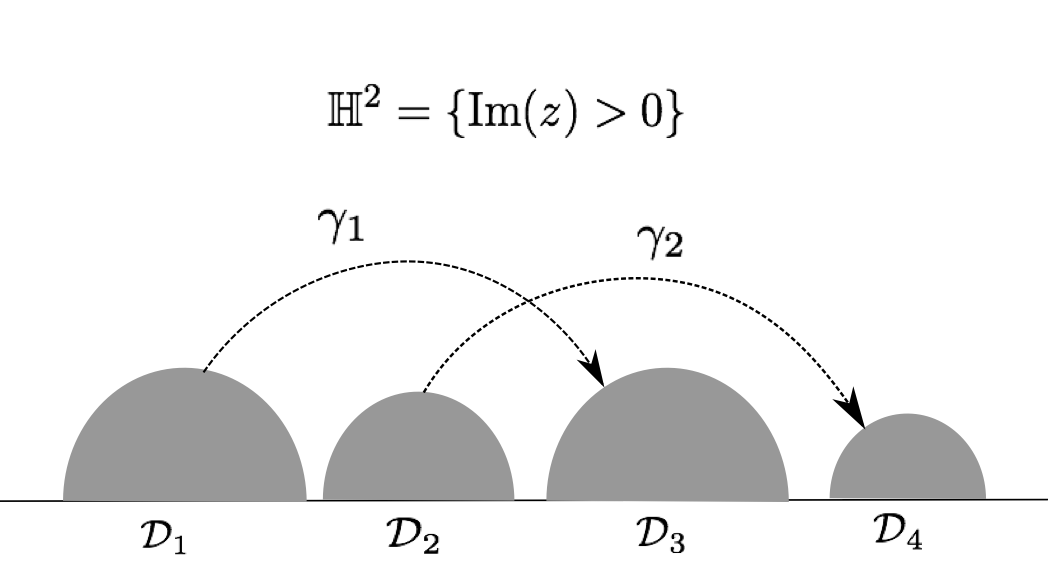}
\end{center}

\bigskip \noindent
Let $\Gamma$ be the free group generated by $\gamma_i,\gamma_i^{-1}$ for $i=1,\ldots,m$, then $\Gamma$ is a convex co-compact group, i.e. it is finitely generated and has no non-trivial parabolic element. The {\it converse is true} : up to isometry, convex co-compact hyperbolic surfaces
can be obtained as a quotient by a group as above, see \cite{Button}. 

For all $j=1,\ldots,2m$, set $I_j:=\D_j\cap \R$. One can define a map 
$$T:I:=\cup_{j=1}^{2m}I_j\rightarrow\R\cup\{\infty\}$$
by setting
$$T(x)=\gamma_j(x)\ \mathrm{if}\ x\in I_j.$$
This map encodes the dynamics of the full group $\Gamma$, and is called the Bowen-Series map, see 
\cite{BowenSeries1, Bowen1} for the genesis of these type of coding. The key properties are orbit equivalence and
uniform expansion of $T$ on the maximal invariant subset $\cap_{n\geq 1} T^{-n}(I)$ which coincides with the limit set
$\Lambda(\Gamma)$, see \cite{Bowen1}.

We now define the function space and the associated transfer operators.
Set 
$$\Omega:=\cup_{j=1}^{2m}\D_j.$$
 Each complex representation space $V_\rr$ is endowed with an inner product $\langle .,.\rangle_\rr$ which makes each representation
$$\rr: \G\rightarrow \mathrm{End}(V_\rr)$$
unitary, where we use the notations of $\S 1$ i.e. $\G$ is the Galois group of the cover $\pi_\G:\widetilde{X}\rightarrow X$,
and we have the associated natural projection $r_\G:\Gamma \rightarrow \G$ such that $\mathrm{Ker}(r_\G)=\widetilde{\Gamma}$.

Consider now the Hilbert space $H^2_\rr(\Omega)$ which is defined as the set of {\it vector valued holomorphic functions} 
$F:\Omega\rightarrow V_\rr$ such that
$$\Vert F\Vert_{H^2_\rr}^2:=\int_{\Omega} \Vert F(z)\Vert^2_\rr dm(z)<+\infty, $$
where $dm$ is Lebesgue measure on $\C$. On the space $H^2_\rr(\Omega)$, we define a "twisted" by $\rr$ transfer operator $\lt_{\rr,s}$ by
$$\lt_{\rr,s}(F)(z):=\sum_{j} ((T')(T_j^{-1}))^{-s}F(y)\rr(T_j^{-1})=
\sum_{j\neq i} (\gamma_j')^s F(\gamma_j z)\rr(\gamma_j),\ \mathrm{if}\ z\in \D_i,$$
where $s\in \C$ is the spectral parameter. Here $\rr(\gamma_j)$ is understood as
$$\rr(r_\G(\gamma_j)),\ \gamma_j\in SL_2(\Z).$$
We also point out that the linear map $\rr(g)$ acts "on the right" on vectors $U \in V_\rr$ simply by fixing an orthonormal basis $\mathcal{B}=(e_1,\ldots,e_{d_\rr})$ of $V_\rr$ and setting
$$U\rr(g):=(U_1,\ldots,U_{d_\rr}) {\mathcal Mat}_{\mathcal{B}}(\rho(g)).$$
Notice that for all $j\neq i$, $\gamma_j:\D_i\rightarrow \D_{m+j}$ is a holomorphic contraction since $\overline{\gamma_j(\D_i)}\subset \D_{m+j}$.
Therefore, $\lt_{\rr,s}$ is a compact {\it trace class} operator and thus has a {\it Fredholm determinant}. We start by recalling a few facts.

We need to introduce some more notations. Considering a finite sequence $\alpha$ with
\[\alpha=(\alpha_1,\ldots,\alpha_n)\in \{1,\ldots, 2m\}^n,\]
we set 
\[ \gamma_\alpha:=\gamma_{\alpha_1}\circ \ldots \circ \gamma_{\alpha_n}. \]
We then denote by $\mathscr{W}_n$ the set of admissible sequences of length $n$ by
\[ \mathscr{W}_n:=\left \{ \alpha \in \{1,\ldots, 2m\}^n\ :\ 
\forall\ i=1,\ldots,n-1,\ \alpha_{i+1}\neq \alpha_i +m\ \mathrm{mod}\ 2m \right \}.\]
The set $\mathscr{W}_n$ is simply the set of reduced words of length $n$.
For all $j=1,\ldots, 2m$, we define $\mathscr{W}_n^j$ by
\[ \mathscr{W}_n^j:=\{ \alpha \in \mathscr{W}_n\ :\ \alpha_n\neq j \}. \] 
If $\alpha \in \mathscr{W}_n^j$, then $\gamma_\alpha$ maps $\overline{\D_j}$ into $\D_{\alpha_1+m}$. Using this set of notations, we have the formula for all
$z\in \D_j$, $j=1,\ldots,2m$, 
$$\lt_{\rr,s}^N(F)(z)=\sum_{\alpha \in \mathscr{W}_N^j} (\gamma_\alpha'(z))^s F(\gamma_\alpha z)\rr(\gamma_\alpha).$$
A key property of the contraction maps $\gamma_\alpha$ is that they are {\it eventually uniformly contracting}, see \cite{Borthwick}, prop 15.4 : there exist $C>0$ and $0<\rho_2<\rho_1<1$ such that for all $z\in \D_j$, for all $\alpha \in \mathscr{W}_n^j$ we have for all $n\geq 1$, 
\begin{equation}
\label{Ucont}
C^{-1}\rho_2^N\leq \sup_{z\in \D_j} \vert \gamma_\alpha'(z) \vert\leq C \rho_1^n
\end{equation}
In addition, they have the {\it bounded distortion property} (see \cite{Naud1} for proofs):
There exists $M_1>0$ such that for all $n,j$ and all $\alpha \in \mathscr{W}_n^j$, we have for all $z \in \D_j$,
\begin{equation}
\label{Bdist}
\left \vert \frac{\gamma''_\alpha(z)}{\gamma'_\alpha(z)}\right \vert \leq M_1.
\end{equation}
We will also need to use the {\it topological pressure} as a way to estimate certain weighted sums over words. 
We will rely on the following fact \cite{Naud1}. Fix $\sigma_0 \in \R$, then there exists $C(\sigma_0)$ such that for all $n$ and 
$\sigma\geq \sigma_0$, we have
\begin{equation}
\label{Pressure0}
\sum_{j=1}^{2m} \left ( \sum_{\alpha \in \mathscr{W}_n^j} \sup_{\D_j} \vert \gamma'_\alpha \vert^\sigma \right) 
\leq C_0e^{nP(\sigma_0)}.
\end{equation}
Here $\sigma\mapsto P(\sigma)$ is the {\bf topological pressure}, which is a strictly convex decreasing function which vanishes at $\sigma=\delta$, see \cite{Bowen1}.  In particular, whenever $\sigma>\delta$, we have $P(\sigma)<0$.
A definition of $P(\sigma)$ is by a variational formula:
$$P(\sigma)=\sup_{\mu} \left ( h_\mu(T)-\sigma \int_{\Lambda}\log \vert T' \vert d\mu \right), $$
where $\mu$ ranges over the set of $T$-invariant probability measures, and $h_\mu(T)$ is the measure theoretic entropy.
For general facts on topological pressure and thermodynamical formalism we refer to \cite{ParryPollicott2}.
Since we will only use it once for the spectral radius estimate below, we don't feel the need to elaborate more on various other definitions of the topological pressure and refer the reader to the above references.
\bigskip
\subsection{Norm estimates and determinant identity}
We start with an a priori norm estimate that will be used later on, see also \cite{JN1} where a similar bound (on a different function space) is proved in appendix.
\begin{propo}
\label{norm1}
Fix $\sigma=\Re(s) \in \R$, then there exists $C_\sigma>0$, independent of $\G,\rr$ such that for all $s \in \C$  with $\Re(s)=\sigma$ and all 
$N$ we have
$$\Vert \lt_{\rr,s}^N\Vert_{H^2_\rr}\leq C_\sigma e^{C_\sigma \vert \Im(s)\vert} e^{2NP(\sigma)}.$$
\end{propo}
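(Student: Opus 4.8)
The plan is to bound the operator norm $\Vert \lt_{\rr,s}^N\Vert_{H^2_\rr}$ by estimating $\Vert \lt_{\rr,s}^N F\Vert_{H^2_\rr}$ for an arbitrary $F \in H^2_\rr(\Omega)$, using the explicit iterated formula
$$\lt_{\rr,s}^N(F)(z)=\sum_{\alpha \in \mathscr{W}_N^j} (\gamma_\alpha'(z))^s F(\gamma_\alpha z)\rr(\gamma_\alpha),\quad z\in \D_j.$$
The first step is to take pointwise norms in $V_\rr$: since each $\rr(\gamma_\alpha)$ is unitary, $\Vert F(\gamma_\alpha z)\rr(\gamma_\alpha)\Vert_\rr = \Vert F(\gamma_\alpha z)\Vert_\rr$, so the representation simply disappears from the size estimate — this is exactly why the constant $C_\sigma$ can be taken independent of $\G$ and $\rr$. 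Writing $\vert (\gamma_\alpha'(z))^s\vert = \vert\gamma_\alpha'(z)\vert^\sigma e^{-\Im(s)\arg \gamma_\alpha'(z)}$ and controlling the argument via the bounded distortion property \eqref{Bdist} (which gives $\vert \arg \gamma_\alpha'(z)\vert \leq C$ uniformly on each disc once one fixes a branch, by integrating $\gamma_\alpha''/\gamma_\alpha'$ along a path in $\D_j$), one extracts a factor $e^{C_\sigma\vert\Im(s)\vert}$ and is left with $\sum_{\alpha \in \mathscr{W}_N^j}\vert\gamma_\alpha'(z)\vert^\sigma \Vert F(\gamma_\alpha z)\Vert_\rr$.

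Next I would apply Cauchy--Schwarz to the word sum, splitting $\vert\gamma_\alpha'(z)\vert^\sigma = \vert\gamma_\alpha'(z)\vert^{\sigma/2}\cdot\vert\gamma_\alpha'(z)\vert^{\sigma/2}$, to get
$$\Vert \lt_{\rr,s}^N(F)(z)\Vert_\rr^2 \leq C e^{C_\sigma\vert\Im(s)\vert}\left(\sum_{\alpha}\vert\gamma_\alpha'(z)\vert^\sigma\right)\left(\sum_{\alpha}\vert\gamma_\alpha'(z)\vert^\sigma \Vert F(\gamma_\alpha z)\Vert_\rr^2\right).$$
The first factor is bounded by $Ce^{NP(\sigma)}$ (uniformly in $z$) using the pressure estimate \eqref{Pressure0} — or more precisely a pointwise version of it, which follows from \eqref{Pressure0} together with bounded distortion allowing one to compare $\sup_{\D_j}\vert\gamma_\alpha'\vert$ with $\vert\gamma_\alpha'(z)\vert$ up to a uniform constant. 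Then I integrate over $z \in \Omega$, and for the remaining term I use the standard change-of-variables / Koebe-type argument for Bergman-type spaces: for a holomorphic contraction $\gamma_\alpha$ mapping $\D_j$ into a disc compactly contained in $\Omega$, one has $\int_{\D_j}\vert\gamma_\alpha'(z)\vert^\sigma\Vert F(\gamma_\alpha z)\Vert_\rr^2\,dm(z) \leq C\sup_{\D_j}\vert\gamma_\alpha'\vert^{\sigma-2}\int_{\gamma_\alpha(\D_j)}\Vert F\Vert_\rr^2\,dm$, because the Jacobian of $z\mapsto\gamma_\alpha z$ is $\vert\gamma_\alpha'(z)\vert^2$ and the derivative factors are comparable by distortion. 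Summing over $\alpha$, using that the images $\gamma_\alpha(\D_j)$ have bounded overlap (they live in the $\D_{\alpha_1+m}$ and reduced words control multiplicity), and applying \eqref{Pressure0} once more to $\sum_\alpha\sup\vert\gamma_\alpha'\vert^{\sigma-2}$ — wait, here one must be slightly careful since $\sigma-2$ may be below any fixed $\sigma_0$; one instead keeps the weight $\vert\gamma_\alpha'\vert^\sigma$ attached to the integral and absorbs the extra $\vert\gamma_\alpha'\vert^{-2}$ against the area scaling directly, so that the net bound is $\sum_\alpha\sup_{\D_j}\vert\gamma_\alpha'\vert^\sigma \cdot\Vert F\Vert^2_{H^2_\rr,\,\gamma_\alpha(\D_j)} \leq Ce^{NP(\sigma)}\Vert F\Vert^2_{H^2_\rr}$.

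Combining the two factors gives $\Vert \lt_{\rr,s}^N(F)\Vert_{H^2_\rr}^2 \leq C_\sigma e^{C_\sigma\vert\Im(s)\vert}e^{2NP(\sigma)}\Vert F\Vert_{H^2_\rr}^2$, which is the claimed estimate after taking square roots (adjusting $C_\sigma$). I expect the main obstacle to be the bookkeeping in the change of variables step: making sure the Jacobian weights, the bounded distortion comparisons, and the bounded-overlap count of the images $\gamma_\alpha(\D_j)$ all fit together so that exactly one power of $e^{NP(\sigma)}$ is produced at each of the two places, yielding the sharp exponent $2NP(\sigma)$ rather than something weaker. The argument-of-derivative estimate producing the $e^{C_\sigma\vert\Im(s)\vert}$ factor is routine given \eqref{Bdist}, and the disappearance of the $\rr$-dependence is immediate from unitarity; the pressure input is quoted directly from \eqref{Pressure0}. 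This follows the scheme of \cite{Naud1,JN1}, the only genuinely new point being that everything is done with $V_\rr$-valued functions, which as noted changes nothing in the norm bounds.
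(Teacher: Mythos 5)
Your first two steps are fine and match the paper: unitarity of $\rr$ kills the $\G$-dependence immediately, and the complex power $\gamma_\alpha'(z)^s$ produces a factor $e^{C\vert\Im(s)\vert}$ controlled by the bounded distortion property \eqref{Bdist}. The Cauchy--Schwarz split $\vert\gamma'_\alpha\vert^\sigma=\vert\gamma'_\alpha\vert^{\sigma/2}\cdot\vert\gamma'_\alpha\vert^{\sigma/2}$ and the pointwise bound $\sum_\alpha\vert\gamma'_\alpha(z)\vert^\sigma\leq Ce^{NP(\sigma)}$ are also sound (the paper instead expands $\Vert\cdot\Vert^2$ as a double sum over $(\alpha,\beta)$ and applies Cauchy--Schwarz there, so the exponent $2NP(\sigma)$ arises as $e^{NP(\sigma)}\cdot e^{NP(\sigma)}$ from two copies of \eqref{Pressure0}, but your route to the first factor is equivalent).

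The gap is in the change-of-variables step. Substituting $w=\gamma_\alpha z$ gives
$$\int_{\D_j}\vert\gamma'_\alpha(z)\vert^\sigma\Vert F(\gamma_\alpha z)\Vert^2_\rr\,dm(z)
=\int_{\gamma_\alpha(\D_j)}\vert\gamma'_\alpha(\gamma_\alpha^{-1}w)\vert^{\sigma-2}\Vert F(w)\Vert^2_\rr\,dm(w)
\asymp\bigl(\sup_{\D_j}\vert\gamma'_\alpha\vert\bigr)^{\sigma-2}\int_{\gamma_\alpha(\D_j)}\Vert F\Vert^2_\rr\,dm,$$
and the exponent $\sigma-2$ cannot be traded away: there is no ``absorption against the area scaling'' beyond exactly this computation. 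Summing over $\alpha$ then produces $\sum_\alpha\bigl(\sup_{\D_j}\vert\gamma'_\alpha\vert\bigr)^{\sigma-2}\Vert F\Vert^2_{H^2_\rr,\gamma_\alpha(\D_j)}$. When $\sigma<2$ the weight $\vert\gamma'_\alpha\vert^{\sigma-2}$ grows exponentially in the word length (by \eqref{Ucont}), so even though the images are disjoint and $\sum_\alpha\Vert F\Vert^2_{H^2_\rr,\gamma_\alpha(\D_j)}\leq\Vert F\Vert^2_{H^2_\rr}$, the sum is not controlled by $e^{NP(\sigma)}\Vert F\Vert^2$. When $\sigma\geq 2$ the weight is bounded, but then disjointness only yields $\Vert F\Vert^2_{H^2_\rr}$ with no pressure gain at all, whereas $P(\sigma)<0$ for such $\sigma$, so again the claimed $e^{NP(\sigma)}$ factor does not appear. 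Either way the second $e^{NP(\sigma)}$ is not produced, and the final exponent $2NP(\sigma)$ is not obtained.

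What the paper does at this point is different and is the key idea you need: rather than changing variables, it applies the scalar Bergman reproducing kernel componentwise, $F(\gamma_\alpha z)=\int_\Omega F(w)B_\Omega(\gamma_\alpha z,w)\,dm(w)$, followed by Cauchy--Schwarz in $w$. Since all the points $\gamma_\alpha z$ lie in a fixed compact $K\subset\Omega$ by uniform contraction, one gets $\sup_{z\in\D_j}\Vert F(\gamma_\alpha z)\Vert_\rr\leq C\Vert F\Vert_{H^2_\rr}$ with $C$ independent of $\alpha$ and $N$. This bound entirely detaches the $F$-dependence from the combinatorics, so that all of the expansion weight $\vert\gamma'_\alpha\vert^\sigma$ flows into the pressure estimate \eqref{Pressure0}, and two such applications give $e^{2NP(\sigma)}$. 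If you replace your change-of-variables step with this reproducing-kernel bound (keeping the rest of your argument), the proof closes.
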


\begin{proof}
First we need to be more specific about the complex powers involved here. First we point out that given $z\in \D_i$ then for all $j\neq i$, $\gamma'_j(z)$ belongs to $\C\setminus (-\infty,0]$, simply because each $\gamma_j$ is in $PSL_2(\R)$. 
This make it possible to define $\gamma'_j(z)^s$ by 
$$\gamma'_j(z)^s:=e^{s\mathbb{L}(\gamma'_j(z))},$$
where $\mathbb{L}(z)$ is the complex logarithm defined  on $\C\setminus (-\infty,0]$ by the contour integral
$$\mathbb{L}(z):=\int_1^z\frac{d\zeta}{\zeta}.$$
By analytic continuation, the same identity holds for iterates.
In particular, because of bound (\ref{Ucont}) and also bound (\ref{Bdist}) one can easily show that there exists $C_1>0$ such that for all $N,j$ and all 
$\alpha \in \mathscr{W}_N^j$, we have
\begin{equation}
\label{Bound1}
\sup_{z\in \D_j} \vert \gamma'_\alpha(z)^s\vert \leq e^{C_1\vert \Im(s)\vert}\sup_{\D_j}\vert \gamma'_\alpha \vert^\sigma,
\end{equation}
where $\sigma=\Re(s)$.
We can now compute, given $F\in H^2_\rr(\Omega)$,
$$\Vert \lt_{\rr,s}^N(F)\Vert^2_{H^2_\rr}:=\sum_{j=1}^{2m}\sum_{\alpha,\beta \in \mathscr{W}_N^j}
\int_{\D_j} \gamma'_\alpha(z)^s \overline{\gamma'_\beta(z)^s} 
\langle F(\gamma_\alpha z) \rr(\gamma_\alpha),F(\gamma_\beta z)\rr(\gamma_\beta) \rangle_\rr dm(z).$$
By unitarity of $\rr$ and Schwarz inequality we obtain
$$\Vert \lt_{\rr,s}^N(F)\Vert^2_{H^2_\rr}\leq  e^{2C_1\vert \Im(s)\vert}\sum_j \sum_{\alpha,\beta}
\sup_{\D_j}\vert \gamma'_\alpha \vert^\sigma \sup_{\D_j}\vert \gamma'_\beta \vert^\sigma
\int_{D_j} \Vert F(\gamma_\alpha z) \Vert_\rr \Vert F(\gamma_\beta z) \Vert_\rr dm(z).$$ 
We now remark that $z\mapsto F(z)$ has components in $H^2(\Omega)$, the Bergman space of $L^2$ holomorphic functions
on $\Omega=\cup_j \D_j$, so we can use the scalar reproducing kernel $B_{\Omega}(z,w)$ to write (in a vector valued way)
$$F(\gamma_\alpha z )=\int_{\Omega}F(w) B_{\Omega}(\gamma_\alpha z,w)dm(w).$$
Therefore we get
$$\Vert F(\gamma_\alpha z )\Vert_\rr\leq \int_{\Omega}\Vert F(w)\Vert_\rr \vert B_{\Omega}(\gamma_\alpha z,w)\vert dm(w), $$
and by Schwarz inequality we obtain
$$\sup_{z\in \D_j}  \Vert F(\gamma_\alpha z )\Vert_\rr\leq \Vert F \Vert_{H^2_\rr} 
\left (\int_{\Omega} \vert B_{\Omega}(\gamma_\alpha z,w)\vert^2 dm(w) \right )^{1/2}.$$
Observe now that by uniform contraction of branches $\gamma_\alpha:\D_j\rightarrow \Omega$,  there exists a compact
subset $K\subset \Omega$ such that for all $N,j$ and $\alpha\in \mathscr{W}_{N}^j$,
$$\gamma_\alpha(\D_j)\subset K.$$
We can therefore bound 
$$\int_{\Omega} \vert B_{\Omega}(\gamma_\alpha z,w)\vert^2 dm(w)\leq C$$
uniformly in $z,\alpha$. We have now reached
$$ \Vert \lt_{\rr,s}^N(F)\Vert^2_{H^2_\rr}\leq  \Vert F \Vert_{H^2_\rr}^2 C_2e^{2C_1\vert \Im(s)\vert}\sum_j \sum_{\alpha,\beta}
\sup_{\D_j}\vert \gamma'_\alpha \vert^\sigma \sup_{\D_j}\vert \gamma'_\beta \vert^\sigma,$$
and the proof is now done using the topological pressure estimate (\ref{Pressure0}).
\end{proof}

The main point of the above estimate is to obtain a bound which is independent of $d_\rr$. In particular the spectral radius 
$\rho_{sp}(\lt_{\rr,s})$ of $\lt_{\rr,s}:H^2_\rr(\Omega)\rightarrow H^2_\rr(\Omega)$ is bounded by
\begin{equation}
\label{radius1}
\rho_{sp}(\lt_{\rr,s})\leq e^{P(\Re(s))},
\end{equation}
which is uniform with respect to the representation $\rr$, and also shows that it is a contraction whenever $\sigma=\Re(s)>\delta$.
Notice also that using the variational principle for the topological pressure, it is possible to show that there exist $a_0,b_0>0$
such that for all $\sigma \in \R$,
\begin{equation}
\label{Pressure2}
P(\sigma)\leq a_0-\sigma b_0.
\end{equation}

We continue with a key determinantal identity. We point out that representations of Selberg zeta functions
as Fredholm determinants of transfer operators have a long history going back to Fried \cite{Fried1}, Pollicott 
\cite{Pollicott1} and also Mayer \cite{Mayer1,Mayer2} for the Modular surface. For more recent works involving transfer operators and unitary representations we also mention \cite{Pohl1,Pohl2}.
\begin{propo}
For all $\Re(s)$ large, we have the identity :
\begin{equation}
\label{eq1}
\det(I-\lt_{\rr,s})=L_\Gamma(s,\rr),
\end{equation}
\end{propo}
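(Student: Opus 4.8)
The plan is to deduce \eqref{eq1} from the standard "sum over periodic orbits" expansion of a Fredholm determinant, and to match the outcome term by term with the Euler product defining $L_\Gamma(s,\rr)$. Throughout one works with $\Re(s)$ large enough that $\lt_{\rr,s}$ is a strict contraction on $H^2_\rr(\Omega)$ — guaranteed by Proposition~\ref{norm1} (or by \eqref{radius1}) — so that, $\lt_{\rr,s}$ being trace class, one has $\det(I-\lt_{\rr,s})=\exp\bigl(-\sum_{N\ge1}\tfrac1N\,\mathrm{Tr}\,\lt_{\rr,s}^N\bigr)$ with all series below absolutely convergent; once both sides of \eqref{eq1} are known to be entire (part~(1) of Theorem~\ref{main1}) the identity propagates to all of $\C$.

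The first and central step is a Lefschetz-type trace formula for the iterates. Since $\lt_{\rr,s}^N(F)(z)=\sum_{\alpha\in\mathscr{W}_N^j}(\gamma_\alpha'(z))^sF(\gamma_\alpha z)\rr(\gamma_\alpha)$ for $z\in\D_j$, with $\gamma_\alpha$ sending $\overline{\D_j}$ into $\D_{\alpha_1+m}$, the trace of $\lt_{\rr,s}^N$ only collects the \emph{periodic} reduced words of length $N$, namely those $\alpha\in\mathscr{W}_N^j$ with $\alpha_1+m=j$ (equivalently, the cyclically reduced words of length $N$), for which $\gamma_\alpha$ is a holomorphic self-contraction of $\D_j$ with a unique attracting fixed point $x_\alpha\in I_j$. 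For each such word I would invoke the one-variable holomorphic fixed point formula — equivalently the standard evaluation, via the Bergman reproducing kernel, of the trace of $f\mapsto u\cdot(f\circ\phi)$ on $H^2(\D_j)$ — to get the contribution $\dfrac{\gamma_\alpha'(x_\alpha)^s}{1-\gamma_\alpha'(x_\alpha)}\,\mathrm{Tr}_{V_\rr}\rr(\gamma_\alpha)=\dfrac{\gamma_\alpha'(x_\alpha)^s}{1-\gamma_\alpha'(x_\alpha)}\,\chi_\rr(\gamma_\alpha)$, the character factor appearing as the trace of right multiplication by $\rr(\gamma_\alpha)$ on $V_\rr$. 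This gives $\mathrm{Tr}\,\lt_{\rr,s}^N=\sum_{\alpha}\frac{\gamma_\alpha'(x_\alpha)^s}{1-\gamma_\alpha'(x_\alpha)}\chi_\rr(\gamma_\alpha)$, summed over periodic $\alpha$ with $|\alpha|=N$.

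The second step is bookkeeping with closed geodesics. Every periodic reduced word is a power $\beta^r$ of a primitive one of length $n=N/r$; the $n$ cyclic shifts of $\beta$ determine one and the same $\Gamma$-conjugacy class, hence a primitive closed geodesic $\mathcal{C}\in\mathcal{P}$ with $l(\mathcal{C})=l_\beta$, the attracting multiplier is $\gamma_\beta'(x_\beta)=e^{-l(\mathcal{C})}$ (so $\gamma_{\beta^r}'(x_{\beta^r})=e^{-rl(\mathcal{C})}$), and $\chi_\rr(\gamma_{\beta^r})=\mathrm{Tr}\bigl(\rr(\gamma_\mathcal{C})^r\bigr)$. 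Expanding $\tfrac{w^s}{1-w}=\sum_{k\ge0}w^{s+k}$ with $w=\gamma_\alpha'(x_\alpha)$, substituting, and interchanging the absolutely convergent sums over $\mathcal{C}$, $r$ and $k$, one reaches $-\sum_{N\ge1}\tfrac1N\,\mathrm{Tr}\,\lt_{\rr,s}^N=\sum_{\mathcal{C}\in\mathcal{P}}\sum_{k\ge0}\sum_{r\ge1}\tfrac1r\bigl(e^{-(s+k)l(\mathcal{C})}\bigr)^r\mathrm{Tr}\bigl(\rr(\gamma_\mathcal{C})^r\bigr)$. Finally I would use the elementary identity $\sum_{r\ge1}\tfrac1r z^r\,\mathrm{Tr}(A^r)=-\log\det(\mathrm{Id}-zA)$ for $A$ unitary and $|z|<1$ (diagonalize $A$) to recognize the inner double sum as $-\log\det\bigl(\mathrm{Id}_{V_\rr}-\rr(\gamma_\mathcal{C})e^{-(s+k)l(\mathcal{C})}\bigr)$; exponentiating then gives $\det(I-\lt_{\rr,s})=\prod_{\mathcal{C}\in\mathcal{P}}\prod_{k\in\N}\det\bigl(\mathrm{Id}_{V_\rr}-\rr(\gamma_\mathcal{C})e^{-(s+k)l(\mathcal{C})}\bigr)=L_\Gamma(s,\rr)$.

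The only genuinely non-formal ingredient is the trace formula of the second paragraph: one must pin down exactly which words contribute, verify trace-class-ness, and compute the trace in the vector-valued setting where $\rr$ acts on the right — all of this uniformly in $\alpha$. Here the eventual-contraction and bounded-distortion estimates \eqref{Ucont}--\eqref{Bdist}, together with the fact that the branches $\gamma_\alpha$ send $\D_j$ into a fixed compact subset of $\Omega$ (already used in the proof of Proposition~\ref{norm1}), are precisely what is needed to legitimate the fixed-point trace computation and the tensor decomposition of the trace over $H^2(\D_j)\otimes V_\rr$. The passage to closed geodesics and the $\log\det$ identity are then routine.
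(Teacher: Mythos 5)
Your proposal takes essentially the same route as the paper: the trace class Fredholm expansion $\log\det(I-\lt_{\rr,s})=-\sum_{N\geq1}\tfrac{1}{N}\mathrm{Tr}\,\lt_{\rr,s}^N$ for $\Re(s)$ large, the Bergman/Lefschetz evaluation of each $\mathrm{Tr}\,\lt_{\rr,s}^N$ over the cyclically reduced words (with the $V_\rr$ factor contributing $\chi_\rr(\gamma_\alpha)$), the cyclic-shift bookkeeping matching those orbits to powers of primitive conjugacy classes, and the finite-dimensional $\log\det$ identity on $V_\rr$. One sign slip to correct: your displayed intermediate identity should read $\sum_{N\geq1}\tfrac{1}{N}\mathrm{Tr}\,\lt_{\rr,s}^N=\sum_{\mathcal{C}\in\mathcal{P}}\sum_{k\geq0}\sum_{r\geq1}\tfrac{1}{r}\bigl(e^{-(s+k)l(\mathcal{C})}\bigr)^r\mathrm{Tr}\bigl(\rr(\gamma_\mathcal{C})^r\bigr)$ \emph{without} the leading minus on the left-hand side — taken literally, your chain would give the reciprocal of $L_\Gamma(s,\rr)$ after exponentiating — though the conclusion you state is of course the correct one.
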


\begin{proof}
Remark that the above statement implies analytic continuation to $\C$ of each L-function $L_\Gamma(s,\rr)$, since each $s\mapsto \det(I-\lt_{\rr,s})$ is readily an entire function of $s$. 
For all integer $N\geq 1$, let us compute the trace of $\lt_{\rr,s}^N$. 
Our basic reference for the theory of Fredholm determinants on Hilbert spaces is \cite{Simon}. Let $(e_1,\ldots,e_{d_\rr})$ be an orthonormal
basis of $V_\rr$. For each disc $\D_j$ let $(\varphi_\ell^{j})_{\ell \in \N}$ be a Hilbert basis of the Bergmann space $H^2(\D_j)$, that
is the space of square integrable holomorphic functions on $\D_j$. Then the family defined by
$$\Psi_{j,\ell,k}(z):=\left \{ \varphi_\ell^{j}(z)e_k\ \mathrm{if}\ z\in \D_j \atop 0\  \mathrm{otherwise}, \right.$$
is a Hilbert basis of $H^2_\rr(\Omega)$. Writing
$$\langle \lt_{\rr,s}^N(\Psi_{j,\ell,k}) ,\Psi_{j,\ell,k}\rangle_{H^2_\rr(\Omega)}=
\sum_{\alpha \in \mathscr{W}_N^j} \int_{\D_j} (\gamma_\alpha'(z))^s \varphi^j_\ell(\gamma_\alpha z)\overline{\varphi^j_\ell(z)}
\langle e_k \rr(\gamma_\alpha), e_k \rangle_\rr dm(z),$$
we deduce that
$$\mathrm{Tr}( \lt_{\rr,s}^N)=\sum_{j,\ell,k}\langle \lt_{\rr,s}^N(\Psi_{j,\ell,k}) ,\Psi_{j,\ell,k}\rangle_{H^2_\rr(\Omega)}$$
$$=\sum_j \sum_{\alpha \in \mathscr{W}_N^j \atop \alpha_1=m+j} \chi_\rr(\gamma_\alpha) \int_{\D_j} (\gamma_\alpha'(z))^s B_{\D_j}(\gamma_\alpha z,z)dm(z),$$
where $\chi_\rr$ is the character of $\rr$ and $B_{\D_j}(w,z)$ is the {\it Bergmann reproducing kernel} of $H^2(\D_j)$.
There is an explicit formula for the Bergmann kernel of a disc $\D_j=D(c_j,r_j)$ : 
$$B_{\D_\ell}(w,z)=\frac{r_j^2}{\pi \left [ r_j^2-(w-c_j)(\overline{z}-c_j)\right ]^2}.$$
It is now an exercise involving Stoke's and Cauchy formula (for details we refer to Borthwick \cite{Borthwick}, P. 306) to obtain the Lefschetz identity
$$\int_{\D_j} (\gamma_\alpha'(z))^s
 B_{\D_j}(\gamma_\alpha z,z) dm(z)= \frac{(\gamma_\alpha'(x_\alpha))^s}{1-\gamma_\alpha'(x_\alpha)},$$
 where $x_\alpha$ is the unique fixed point of $\gamma_\alpha:\D_j\rightarrow \D_j$. Moreover, 
 $$\gamma_\alpha'(x_\alpha)=e^{-l( \mathcal{C}_\alpha)},$$
 where $\mathcal{C}_\alpha$ is the closed geodesic represented by the conjugacy class of $\gamma_\alpha \in \Gamma$, and
 $l( \mathcal{C}_\alpha)$ is the length. There is a one-to-one correspondence between prime reduced words (up to circular permutations) in
 $$\bigcup_{N\geq 1} \bigcup_{j=1}^{2m} \{\alpha \in \mathscr{W}_N^j\ \mathrm{such\ that}\ \alpha_1=m+j \},$$
and prime conjugacy classes in $\Gamma$ (see Borthwick \cite[page~303]{Borthwick}), therefore each prime conjugacy class in $\Gamma$ and its iterates appear in the above sum, when $N$ ranges from $1$ to $+\infty$.
 
We have therefore reached formally (absolute convergence is valid for $\Re(s)$ large, see later on)
$$\sum_{N\geq 1} \frac{1}{N}\mathrm{Tr}( \lt_{\rr,s}^N)=\sum_{N\geq 1} \frac{1}{N}\sum_j \sum_{\alpha \in \mathscr{W}_N^j \atop \alpha_1=m+j}
\chi_\rr(\gamma_\alpha) \frac{(\gamma_\alpha'(x_\alpha))^s}{1-\gamma_\alpha'(x_\alpha)}$$
$$=\sum_{\mathcal{C}\in \mathcal{P}} \sum_{k\geq 1} \frac{\chi_\rr(\mathcal{C}^k)}{k}
\frac{e^{-skl(\mathcal{C})}}{1-e^{-kl(\mathcal{C})}}.$$
The prime orbit theorem for convex co-compact groups says that as $T\rightarrow +\infty$, (see for example \cite{Lalley,Naud3}), 
$$\#\{ (k,\mathcal{C})\in \N_0\times \mathcal{P}\ :\ kl(\mathcal{C})\leq T\}=\frac{e^{\delta T}}{\delta T}\left (1+o(1)\right).$$
On the other hand, since $\chi_\rr$ takes obviously finitely many values on $\G$
we get absolute convergence of the above series for $\Re(s)>\delta$. For all $\Re(s)$ large, we get again formally
$$\det(I-\lt_{\rr,s})=\mathrm{exp}\left( \sum_{N\geq 1} \frac{1}{N}\mathrm{Tr}( \lt_{\rr,s}^N)   \right)$$
$$=\mathrm{exp}\left(-\sum_{\mathcal{C},k,n} \frac{\chi_\rr(\mathcal{C}^k)}{k}
e^{-(s+n)kl(\mathcal{C})} \right)=\prod_{\mathcal{C}\in \mathcal{P}}\prod_{n\in \N}
\mathrm{exp}\left(-\sum_{k\geq 1} \frac{\chi_\rr(\mathcal{C}^k)}{k} e^{-(s+n)kl(\mathcal{C})} \right)$$
$$=\prod_{\mathcal{C}\in \mathcal{P}} \prod_{k\in \N} \det \left (Id_{V_\rr}-\rr(\mathcal{C^k})e^{-(s+k)l(\mathcal{C})} \right).$$
This formal manipulations are justified for $\Re(s)>\delta$ by using the spectral radius estimate (\ref{radius1}) and the fact that if $A$
is a trace class operator on a Hilbert space $\mathcal{H}$ with $\Vert A\Vert_{\mathcal{H}}<1$ then we have 
$$\det(I-A)=\mathrm{exp}\left( -\sum_{N\geq 1} \frac{1}{N}\mathrm{Tr}( A^N)\right),$$
(this is a direct consequence of Lidskii's theorem, see \cite[Chapter~3]{Simon}).
The proof is finished and we have claim $1)$ of Theorem \ref{main1}. 
\end{proof}

\noindent
Claim $3)$ follows from the formula (valid for $\Re(s)>\delta$)
$$ \det(I-\lt_{\rr,s})=\mathrm{exp}\left(-\sum_{\mathcal{C},k,n} \frac{\chi_\rr(\mathcal{C}^k)}{k}
e^{-(s+n)kl(\mathcal{C})} \right),$$
and the identity for the character of the regular representation (see \cite[Chapter~2]{serre}) 
\begin{equation}
\label{Schur1}
\sum_{\rr\  \mathrm{irreducible}} d_\rr \chi_\rr(g)=\vert \G \vert \mathcal{D}_e(g),
\end{equation}
where $\mathcal{D}_e$ is the dirac mass at the neutral element $e$. Indeed, using (\ref{Schur1}), we get
\begin{equation}
\label{form1}
\prod_{\rr\  \mathrm{irreducible}} \left (\det(I-\lt_{\rr,s})\right)^{d_\rr}=
\mathrm{exp}\left(-\vert \G \vert \sum_{k,n} \sum_{\mathcal{C} \in \mathcal{P}\atop r_\G(\mathcal{C})=e }\frac{1}{k}
e^{-(s+n)kl(\mathcal{C})} \right).
\end{equation}
The end of the proof rests on an algebraic fact related to the splitting of conjugacy classes in $\widetilde{\Gamma}$. For the benefit of the reader, we give the outline.
It is easy to check that any prime conjugacy class $\widetilde{\mathcal{C}}$ in $\widetilde{\Gamma}$ has a representative given by (representative of) a power of a prime conjugacy class (in $\Gamma$), i.e.
$$\widetilde{\mathcal{C}}=\mathcal{C}^\ell,$$
for some $1\leq \ell\leq \vert G \vert$. It is then a fact of group theory that the conjugacy class of  $\mathcal{C}^\ell$ in $\Gamma$ will split in $\widetilde{\Gamma}$ in one-to-one correspondence with the cosets of
$$\Gamma/ \widetilde{\Gamma} C_\Gamma(\mathcal{C}^\ell),$$
where $C_\Gamma(\mathcal{C}^\ell)$ is the centralizer in $\Gamma$ of $\mathcal{C}^\ell$. Because we are in a free group, this centralizer is the elementary group generated
by $\mathcal{C}$, which shows that the number of conjugacy classes in $\widetilde{\Gamma}$ is $\vert G \vert /\ell$. This factor $\ell$ is exactly what's needed to recognize
in (\ref{form1}) the length $\ell l(\mathcal{C})=l(\mathcal{C}^\ell)=l(\widetilde{\mathcal{C}})$.

We refer the reader to \cite{PohlFedosova} for more details, including a
complete proof of the factorization formula $3)$ for geometrically finite groups. We point out that this type of analog of the Artin factorization had already been proved by Venkov-Zograv in \cite{VenkovZograf} for cofinite groups.

\subsection{Singular value estimates}
The proof of claim $2)$ will require more work and will use singular values estimates for vector-valued operators.  We now recall a few facts on singular values of trace class operators.
Our reference for that matter is for example the book \cite{Simon}. If $T:\mathcal{H}\rightarrow \mathcal{H}$ is a compact operator acting on a Hilbert space $\mathcal{H}$, the {\it singular value sequence} is by definition the sequence $\mu_1(T)=\Vert T\Vert\geq \mu_2(T)\geq\ldots \geq \mu_n(T)$ of the eigenvalues of the positive self-adjoint operator $\sqrt{T^*T}$. To estimate singular values in a vector valued setting, we will rely on the following fact.
\begin{lem}
\label{singular}
Assume that $(e_j)_{j\in J}$ is a Hilbert basis of $\mathcal{H}$, indexed by a countable set $J$. Let $T$ be a compact operator on $\mathcal{H}$.
Then for any subset $I\subset J$ with $\# I=n$ we have 
$$\mu_{n+1}(T)\leq \sum_{j\in J\setminus I} \Vert T e_j \Vert_{\mathcal H}.$$
\end{lem}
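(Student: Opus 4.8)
The plan is to use the variational (min-max) characterization of singular values together with the fact that truncating a Hilbert basis gives an explicit finite-rank approximation. Recall that for a compact operator $T$ on a Hilbert space $\mathcal{H}$ one has
$$\mu_{n+1}(T) = \min\left\{ \Vert T - R \Vert_{\mathcal{H}}\ :\ R \text{ has rank} \leq n \right\},$$
the infimum being attained (this is the Schmidt--Mirsky / Courant--Fischer statement, see \cite[Chapter~1]{Simon}). So it suffices to exhibit one operator $R$ of rank at most $n$ with $\Vert T - R\Vert_{\mathcal{H}} \leq \sum_{j \in J\setminus I} \Vert T e_j \Vert_{\mathcal H}$.

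First I would fix the subset $I \subset J$ with $\#I = n$ and let $\Pi_I$ denote the orthogonal projection onto the finite-dimensional subspace $\mathrm{span}\{e_j : j \in I\}$. The natural candidate is $R := T \Pi_I$, which has rank at most $n$. Then $T - R = T(\mathrm{Id} - \Pi_I)$, and for an arbitrary $u = \sum_{j\in J} u_j e_j \in \mathcal{H}$ with $\Vert u \Vert_{\mathcal{H}} \leq 1$ we expand
$$\Vert (T - R) u \Vert_{\mathcal{H}} = \Big\Vert \sum_{j \in J\setminus I} u_j\, T e_j \Big\Vert_{\mathcal{H}} \leq \sum_{j \in J\setminus I} |u_j|\, \Vert T e_j \Vert_{\mathcal{H}} \leq \sum_{j \in J\setminus I} \Vert T e_j \Vert_{\mathcal{H}},$$
where the last step uses $|u_j| \leq \Vert u\Vert_{\mathcal{H}} \leq 1$ for each $j$ (Bessel's inequality). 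Taking the supremum over such $u$ gives $\Vert T - R\Vert_{\mathcal{H}} \leq \sum_{j\in J\setminus I} \Vert T e_j\Vert_{\mathcal{H}}$, and combined with the min-max formula above this yields the claimed bound on $\mu_{n+1}(T)$.

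There is essentially no serious obstacle here; the only point requiring a word of care is the convergence and manipulation of the infinite sum $\sum_{j\in J\setminus I}\Vert Te_j\Vert_{\mathcal H}$ — if it diverges the inequality is vacuous, and if it converges the interchange above is justified by absolute convergence, so $R$ is a genuine bounded (indeed finite-rank) operator and the estimate on $\Vert (T-R)u\Vert_{\mathcal H}$ is legitimate for every unit vector $u$. The slight subtlety worth flagging is that the bound is not symmetric in the choice of Hilbert basis, so in applications one chooses the basis $(e_j)$ and the index set $I$ adapted to the operator at hand (here the basis $\Psi_{j,\ell,k}$ of $H^2_\rr(\Omega)$), which is exactly what makes the lemma useful for the $d_\rr$-uniform singular value estimates needed for claim $2)$ of Theorem \ref{main1}.
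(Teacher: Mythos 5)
Your proof is correct and essentially matches the paper's: the paper invokes the Courant--Fischer min-max formula for $\sqrt{T^*T}$ restricted to the orthogonal complement of $\mathrm{span}\{e_j : j \in I\}$, while you invoke the equivalent approximation-number characterization via the finite-rank operator $R = T\Pi_I$, but in both cases the heart of the argument is the identical estimate $\Vert T w\Vert \leq \sum_{j\notin I}|c_j|\,\Vert Te_j\Vert \leq \sum_{j\notin I}\Vert Te_j\Vert$ for a unit vector $w=\sum_{j\notin I}c_j e_j$.
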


\begin{proof}
By the min-max principle for bounded self-adjoint operators, we have
$$\mu_{n+1}(T)=\min_{\mathrm{dim}(F)=n} \max_{w \in F^{\perp},\Vert w \Vert=1} \langle \sqrt{T^*T}w,w\rangle.$$
Set $F=\mathrm{Span}\{ e_j,\ j\in I\}$. Given $w=\sum_{j\not \in I} c_j e_j$ with $\sum_j \vert c_j \vert^2=1$, we obtain via Cauchy-Schwarz inequality
$$\vert \langle \sqrt{T^*T}w,w\rangle\vert \leq \Vert \sqrt{T^*T}(w)\Vert=\Vert T(w)\Vert\leq \sum_{j\not \in I} \Vert T(e_j)\Vert,$$
which concludes the proof. 
\end{proof}
 
Our aim is now to prove the following bound.
\begin{propo}
\label{eigen1}
Let $(\lambda_k(\lt_{\rr,s}))_{k\geq 1}$ denote the eigenvalue sequence of the compact operators $\lt_{\rr,s}$. There exists $C>0$ and
$0<\eta$ such that for all $s\in \C$ and all representation $\rr$, we have for all $k$,
$$\vert\lambda_k( \lt_{\rr,s}) \vert \leq Cd_\rr e^{C\vert s\vert} e^{-\frac{\eta}{d_\rr}k}.$$
\end{propo}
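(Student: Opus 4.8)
The plan is to bound the eigenvalues $\lambda_k(\lt_{\rr,s})$ by the singular values $\mu_k(\lt_{\rr,s})$ (using $|\lambda_k|\leq \mu_k$ after ordering, Weyl's inequality), and to estimate the singular values via Lemma~\ref{singular} applied to the Hilbert basis $\Psi_{j,\ell,k}=\varphi_\ell^j e_k$ of $H^2_\rr(\Omega)$ introduced in the proof of the determinant identity. The key point is that $\lt_{\rr,s}$ decouples as (transfer operator on the scalar Bergman space) $\otimes$ (unitary action of $\rr$), so its action on a basis vector $\Psi_{j,\ell,k}$ has norm controlled by $\|\lt_{0,s}(\varphi_\ell^j)\|$ times $\|\rr(\gamma_\alpha)\|=1$; the $d_\rr$-fold multiplicity of the $V_\rr$-factor is what will eventually produce the $1/d_\rr$ in the exponent.

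First I would set up the scalar estimate: for the untwisted operator on the Bergman spaces $H^2(\D_j)$, the contraction estimate \eqref{Ucont} together with bounded distortion \eqref{Bdist} and the explicit Bergman kernel $B_{\D_j}$ give, by a standard argument (as in \cite{Naud1,Borthwick}), a bound of the form $\sum_{j,\ell\geq L}\|\lt_{0,s}(\varphi_\ell^j)\|\leq C e^{C|s|}\rho_1^{cL}$ for suitable $c>0$; equivalently, enumerating the scalar basis one gets $\mu_{n+1}(\lt_{0,s})\leq C e^{C|s|} e^{-\eta_0 n}$ for some $\eta_0>0$. Now for the vector-valued operator, the basis $\{\Psi_{j,\ell,k}\}$ has $d_\rr$ copies (indexed by $k=1,\dots,d_\rr$) of each scalar basis element. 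Applying Lemma~\ref{singular} with $I$ chosen as the span of $\Psi_{j,\ell,k}$ for $\ell$ below some cutoff $L$ and all $k$, we have $\#I \asymp d_\rr L$, and
$$\mu_{\#I+1}(\lt_{\rr,s})\leq \sum_{k=1}^{d_\rr}\sum_{j}\sum_{\ell\geq L}\|\lt_{\rr,s}\Psi_{j,\ell,k}\|_{H^2_\rr}\leq d_\rr\cdot Ce^{C|s|}e^{-\eta_0 L},$$
where the extra $d_\rr$ is exactly the sum over $k$ (the $\rr$-factor contributes nothing beyond its unitarity, by the same Schwarz-type manipulation as in Proposition~\ref{norm1}). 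Writing $n=\#I\asymp d_\rr L$, i.e. $L\asymp n/d_\rr$, yields $\mu_{n}(\lt_{\rr,s})\leq C d_\rr e^{C|s|} e^{-\eta n/d_\rr}$ for a suitable $\eta>0$, and then $|\lambda_k(\lt_{\rr,s})|\leq \mu_k(\lt_{\rr,s})$ gives the claim.

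The main obstacle is making the scalar singular-value decay $\mu_{n+1}(\lt_{0,s})\leq Ce^{C|s|}e^{-\eta_0 n}$ genuinely uniform and with the correct dependence on $s$: one must control $\|\lt_{\rr,s}\Psi_{j,\ell,k}\|$ using the holomorphic-contraction structure (the $\gamma_\alpha(\D_j)$ land in a fixed compact $K\subset\Omega$), extract geometric decay in $\ell$ from the decay of the tail of the Bergman basis $\varphi_\ell^j$ under precomposition with a strict contraction, and track the factor $e^{C_1|\Im(s)|}$ from \eqref{Bound1} together with the pressure bound \eqref{Pressure2} to absorb $\Re(s)$ into $e^{C|s|}$. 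Once this scalar estimate is in place with constants independent of $\rr$ and $\G$, the passage to the vector-valued setting is purely bookkeeping of the $d_\rr$ multiplicity, as sketched above.
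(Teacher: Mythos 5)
Your overall strategy mirrors the paper's: you use the same explicit Hilbert basis $\Psi_{j,\ell,k}$ of $H^2_\rr(\Omega)$, the same Schottky contraction estimate $\|\lt_{\rr,s}\Psi_{j,\ell,k}\|_{H^2_\rr}\leq Ce^{C|s|}e^{-\eta_1\ell}$ obtained from (\ref{Ucont2}) and (\ref{Bound1}), and the same application of Lemma~\ref{singular} to get the singular value bound $\mu_{n+1}(\lt_{\rr,s})\leq Cd_\rr e^{C|s|}e^{-\eta n/d_\rr}$; the $d_\rr$-fold multiplicity bookkeeping also matches the paper.

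The gap is the very last step, where you assert that ``$|\lambda_k(\lt_{\rr,s})|\leq\mu_k(\lt_{\rr,s})$ (Weyl's inequality) gives the claim.'' The termwise comparison $|\lambda_k(T)|\leq\mu_k(T)$ between the $k$-th eigenvalue and the $k$-th singular value of a compact operator is \emph{not} Weyl's inequality and is in general \emph{false} for $k\geq 2$: for the $2\times 2$ matrix with $1$'s on the diagonal and a large entry $M$ above it, both eigenvalues equal $1$ but $\mu_2\approx 1/M$. Only $|\lambda_1|\leq\mu_1=\|T\|$ holds termwise. What Weyl's inequality actually provides is the multiplicative form $\prod_{k=1}^N|\lambda_k(T)|\leq\prod_{k=1}^N\mu_k(T)$ for all $N$ (this is \cite[Theorem~1.14]{Simon}, the reference the paper cites). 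One then uses $|\lambda_N|^N\leq\prod_{k=1}^N|\lambda_k|\leq\prod_{k=1}^N\mu_k$, takes the $N$-th root, and applies $\sum_{k=1}^N k=N(N+1)/2$ to convert the singular value bound $\mu_k\leq Cd_\rr e^{C|s|}e^{-\eta_2 k/d_\rr}$ into the eigenvalue bound $|\lambda_N|\leq C'd_\rr e^{C'|s|}e^{-\eta N/d_\rr}$ with $\eta=\eta_2/2$. The conclusion is unchanged (up to the value of $\eta$), but you need this geometric-mean argument; the termwise comparison you invoke does not exist.
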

\noindent Before we prove this bound, let us show quickly how the combination of the above bound with (\ref{radius1}) 
gives the estimate $2)$ of Theorem
\ref{main1}. By definition of Fredholm determinants, we have
$$\log \vert L_\Gamma(s,\rr) \vert\leq \sum_{k=1}^\infty \log(1+\vert\lambda_k( \lt_{\rr,s}) \vert)$$
$$=\sum_{k=1}^N \log(1+\vert\lambda_k( \lt_{\rr,s}) \vert) +\sum_{k=N+1}^\infty \log(1+\vert\lambda_k( \lt_{\rr,s}) \vert),$$
where $N$ will be adjusted later on. The first term is estimated via (\ref{Pressure2}) as
$$ \sum_{k=1}^N \log(1+\vert\lambda_k( \lt_{\rr,s}) \vert)\leq \widetilde{C}(\vert s\vert +1)N,$$
for some large constant $\widetilde{C}>0$. On the other hand we have by the eigenvalue bound from Proposition \ref{eigen1}
$$\sum_{k=N+1}^\infty \log(1+\vert\lambda_k( \lt_{\rr,s}) \vert )\leq \sum_{k=N+1}^\infty \vert\lambda_k( \lt_{\rr,s}) \vert$$
$$\leq Cd_\rr e^{C\vert s\vert} \sum_{k\geq N+1} e^{-\frac{\eta}{d_\rr}k}=
Cd_\rr e^{C\vert s\vert} \frac{e^{-(N+1)\eta/d_\rr}}{1-e^{-\eta/d_\rr}}$$
$$\leq C' \frac{d_\rr^2}{\eta}e^{C\vert s\vert}  e^{-N\frac{\eta}{d_\rr}}.$$
Choosing $N=B[\vert s\vert d_\rr]+B[d\rr \log (d_\rr+1)]$ for some large $B>0$ leads to
$$\sum_{k=N+1}^\infty \log(1+\vert\lambda_k( \lt_{\rr,s}) \vert )\leq \widetilde{B}$$
for some constant $\widetilde{B}>0$ uniform in $\vert s\vert$ and $d_\rr$. Therefore we get 
$$ \log \vert L_\Gamma(s,\rr) \vert\leq O\left(d_\rr \log(d_\rr+1)(\vert s\vert^2+1)\right),$$
which is the bound claimed in statement $2)$.

\begin{proof}[Proof of Proposition \ref{eigen1}]
We first recall that if $\D_j=D(c_j,r_j)$, an explicit Hilbert basis of the Bergmann space $H^2(\D_j)$ is given by
the functions ( $\ell=0,\ldots,+\infty$, $j=1,\ldots,2m$)
$$\varphi_\ell^{(j)}(z)=\sqrt{\frac{\ell+1}{\pi}}\frac{1}{r_j} \left (\frac{z-c_j}{r_j} \right)^\ell.$$
By the Schottky property, one can find $\eta_0>0$ such for all $z\in \D_j$, for all $i\neq j$ we have $\gamma_i(z)\in \D_{i+m}$ and
$$\frac{\vert \gamma_i(z)-c_{m+i}\vert}{r_{m+i}}\leq e^{-\eta_0},$$
so that we have uniformly in $i,z$,
\begin{equation}
\label{Ucont2}
 \vert \varphi_\ell^{(i+m)}(\gamma_i z)\vert \leq Ce^{-\eta_1 \ell},
\end{equation}
for some $0<\eta_1<\eta_0$. Going back to the basis $\Psi_{j,\ell,k}(z)$ of $H^2_\rr(\Omega)$, we can write
$$\Vert \lt_{\rr,s}( \Psi_{j,\ell,k})\Vert^2_{H^2_\rr}=\sum_{n=1}^{2m} \sum_{i,i'\neq n}
\int_{\D_n}(\gamma_i(z))^s \overline{(\gamma_{i'}(z))^s} 
\langle\Psi_{j,\ell,k}(\gamma_i z)\rr(\gamma_i),\Psi_{j,\ell,k}(\gamma_{i'} z)\rr(\gamma_{i'})  \rangle_\rr dm(z).$$
Using Schwarz inequality and unitarity of the representation $\rr$ for the inner product $\langle .,.\rangle_\rr$, 
we get by (\ref{Ucont2}) and also (\ref{Bound1}),
$$ \Vert \lt_{\rr,s}( \Psi_{j,\ell,k})\Vert^2_{H^2_\rr}\leq \widetilde{C}e^{\widetilde{C}\vert s \vert} e^{-2\eta_1 \ell},$$
for some large constant $\widetilde{C}>0$. We can now use Lemma \ref{singular} to write
$$\mu_{2md_\rho n+1}(\lt_{\rr,s})\leq \sum_{j=1}^{2m} \sum_{\ell=n}^{+\infty} \sum_{k=1}^{d_\rr} \Vert \lt_{\rr,s}(\Psi_{j,\ell,k}) \Vert_{H^2_\rr} $$
$$\leq C d_\rho e^{\widetilde{C}\vert s\vert} e^{-\eta_1 n},$$
for some $C>0$. Given $N\in \N$, we write $N=2md_\rr k+r$ where $0\leq r<2md_\rr$ and $k=[\frac{N}{2md_\rho}]$. We end up with
$$\mu_{N+1}(\lt_{\rho,s})\leq\mu_{2md_\rr k+1}(\lt_{\rr,s})\leq C'd_\rr e^{\widetilde{C}\vert s\vert} e^{-\eta_2 N/d_\rr},$$
for some $\eta_2>0$. To produce a bound on the eigenvalues, we use then a variant of Weyl inequalities (see \cite[Theorem~1.14]{Simon}) to get
$$\vert \lambda_N(\lt_{\rr,s})\vert\leq \prod_{k=1}^N \vert \lambda_k(\lt_{\rr,s})\vert \leq \prod_{k=1}^N \mu_k(\lt_{\rr,s}),$$
which yields
$$\vert \lambda_N(\lt_{\rr,s})\vert \leq C_1 d_\rr e^{C_2 \vert s\vert} e^{-\frac{\eta_2}{Nd_\rr}\sum_{k=1}^N k}.$$
Using the well known identity $\sum_{k=1}^N k=\frac{N(N+1)}{2}$ we finally recover
$$\vert \lambda_N(\lt_{\rr,s})\vert \leq C_1 d_\rr e^{C_2 \vert s\vert} e^{-\frac{\eta N}{d_\rr}},$$
for some $\eta>0$ and the proof is done. 
\end{proof}

\section{Equidistribution of resonances and abelian covers}
In this section we prove Theorem \ref{main3}. We use the notations of $\S 1$. We recall that we consider a family of Abelian covers of a fixed surface
$X=\Gamma \backslash \hh$ given by normal subgroups $\Gamma_j \vartriangleleft \Gamma$ with Galois group
$$\G_j=\Z / N_1^{(j)} \Z \times \Z / N_2^{(j)} \Z \times \ldots \times \Z / N_m^{(j)} \Z.$$ 
Since we assume that $\vert \G_j \vert \rightarrow +\infty$ as $j\rightarrow +\infty$, we can extract a sequence (and reindex) such that
$$\G_j=\Z / N_1^{(j)} \Z \times \ldots \times \Z / N_r^{(j)} \Z \times \Z / N_{r+1} \Z \times \ldots \times \Z / N_m \Z,$$
with $\min\{N_1^{(j)},\ldots,N_r^{(j)}\}\rightarrow +\infty$ as $j\rightarrow +\infty$ and $N_{r+1},\ldots N_m$ are fixed (and could be $1$).
The characters of $\G_j$ are given by 
$$\chi_\alpha(g):=\exp\left(2i\pi \sum_{\ell=1}^m \frac{\alpha_\ell}{N_\ell}g_\ell \right),$$
where $g=(g_1,\ldots,g_m)$ and $\alpha=(\alpha_1,\ldots,\alpha_m)$ with $\alpha_\ell \in \{0,\ldots,N_\ell-1\}$. 
Thanks to Theorem \ref{main1} and since the representations are one-dimensional, we have the factorization formula
$$Z_{\Gamma_j}(s)=\prod_{\alpha}L_\Gamma(s,\chi_\alpha),$$
where $\alpha$ belongs to the above specified set product. The case $\alpha=0$ corresponds to the trivial representation, hence the associated $L$-function is $Z_\Gamma(s)$ which has a simple zero at $s=\delta$. Roughly speaking, we need to split this product into two separate factors:
the one corresponding to "small $\alpha$'s" which will produce a zero close to $s=\delta$ via an implicit function theorem, and the other ones
for which we have to show that they do not vanish in a small neighbourhood of $\delta$. To that effect, we will introduce an auxiliary $L$-function that is related to characters of the homology group $H^1(X,\Z)\simeq \Z^m$.

Consider the Hilbert space $H^2(\Omega)$ which is defined as the set of {\it holomorphic functions} 
$F:\Omega\rightarrow \C$ such that
$$\Vert F\Vert_{H^2}^2:=\int_{\Omega} \vert F(z)\vert^2 dm(z)<+\infty, $$
where $dm$ is Lebesgue measure on $\C$. On the space $H^2(\Omega)$, given $\theta \in \C^m$, 
we define a "twisted" transfer operator $\lt_{s,\theta}$ by
$$\lt_{s,\theta}(F)(z):=
\sum_{\ell\neq k} (\gamma_\ell')^s e^{2i\pi \theta \bullet P(\gamma_\ell)}F(\gamma_\ell z),\ \mathrm{if}\ z\in \D_k,$$
where $s\in \C$ is the spectral parameter, $P:\Gamma \rightarrow \Z^m$ is the projection in the first homology group.
In addition we have denoted by $\theta \bullet a$ the pairing
$$\theta \bullet a:=\sum_{k=1}^m \theta_k a_k.$$
This family (of trace class operators) depends holomorphically on $(s,\theta)$, therefore the Fredholm determinant
$$L_\Gamma(s,\theta):=\det(I-\lt_{s,\theta}) $$
is holomorphic on $\C\times \C^m$. When $\theta \in \R^m/\Z^m$, this is actually the $L$-function associated to the obvious character $\chi_\theta$
of $\Z^m$.
Note that using this auxiliary function, we have now
\begin{equation}
\label{factor1}
Z_{\Gamma_j}(s)=\prod_{k=(k_1,\ldots ,k_m)\in \mathcal{S}_j}L_\Gamma(s,k_1/N_1^{(j)},\ldots,k_r/N_r^{(j)},\ldots ,k_m/N_m),
\end{equation}
where
$$\mathcal{S}_j=\{0,\ldots,N_1^{(j)}-1 \}\times \ldots \times \{0,\ldots,N_r^{(j)} -1\}\times \ldots \times \{0,\ldots,N_m-1\}.$$

\subsection{A non vanishing result for $L_\Gamma(s,\theta)$}
The goal of this subsection is to establish the following fact which is crucial in the analysis of resonances close to $s=\delta$.
\begin{propo}
 \label{nonv}
 Using the above notations, we have for $\theta \in \R^m$,
 $$L_\Gamma(\delta,\theta)=0 \Leftrightarrow \theta \in \Z^m.$$
\end{propo}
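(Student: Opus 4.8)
The plan is to treat the two implications separately. The implication $\theta\in\Z^m\Rightarrow L_\Gamma(\delta,\theta)=0$ is immediate: for $\theta\in\Z^m$ the weights $e^{2i\pi\theta\bullet P(\gamma_\ell)}$ all equal $1$, so $\lt_{s,\theta}=\lt_{s,0}$ is the untwisted transfer operator and, by the determinant identity \eqref{eq1} applied to the trivial representation, $L_\Gamma(s,\theta)=\det(I-\lt_{s,0})=Z_\Gamma(s)$; since $\delta$ is the leading resonance of $X$ (Patterson--Perry \cite{PatPerry}) we get $Z_\Gamma(\delta)=0$. The content is in the converse, which I would attack via Ruelle--Perron--Frobenius (RPF) theory for $\lt_{\delta,0}$ on the limit set together with an equality case in the triangle inequality.

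So assume $L_\Gamma(\delta,\theta)=\det(I-\lt_{\delta,\theta})=0$. Since $\lt_{\delta,\theta}$ is nuclear, there is $u\in H^2(\Omega)\setminus\{0\}$ with $\lt_{\delta,\theta}u=u$, hence $\lt_{\delta,\theta}^Nu=u$ for all $N$. First I would check that $u$ does not vanish identically on the limit set $\Lambda=\Lambda(\Gamma)$ (note $\Lambda\subset\Omega$, so $u$ is smooth near $\Lambda$): the level-$N$ pieces $\gamma_\alpha(\D_k)$, $\alpha\in\mathscr{W}_N^k$, lie in an arbitrarily small neighbourhood of $\Lambda$ once $N$ is large by \eqref{Ucont}, while $\sum_{\alpha}\sup_{\D_k}|\gamma_\alpha'|^\delta$ stays bounded by \eqref{Pressure0} (as $P(\delta)=0$), so $u=\lt_{\delta,\theta}^Nu$ and $u|_\Lambda\equiv0$ would force $\|u\|_{\infty,\D_k}\to0$, i.e. $u\equiv0$, a contradiction. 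Next, recall the RPF data for $\lt_{\delta,0}$ on $\Lambda$: the leading eigenvalue $1$ is simple with a strictly positive eigenfunction $h$, and there is a probability measure $\nu$ on $\Lambda$ with $\int_\Lambda\lt_{\delta,0}f\,d\nu=\int_\Lambda f\,d\nu$. Because the weights $(\gamma_\ell'(z))^\delta$ are positive for $z\in\Lambda\subset\R$, the identity $\lt_{\delta,\theta}u=u$ gives $|u|\le\lt_{\delta,0}|u|$ on $\Lambda$, and integrating against $\nu$ yields $\int_\Lambda|u|\,d\nu\le\int_\Lambda\lt_{\delta,0}|u|\,d\nu=\int_\Lambda|u|\,d\nu$. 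Hence $|u|=\lt_{\delta,0}|u|$ on $\Lambda$, and simplicity of the RPF eigenvalue forces $|u|=c\,h$ on $\Lambda$ for some $c>0$; in particular $u$ is nonvanishing on $\Lambda$ and $\varphi:=u/|u|$ is a continuous unimodular function there.

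The rigidity step is the equality case of the triangle inequality. For $z\in\Lambda\cap\D_k$ we have simultaneously
$$u(z)=\sum_{\ell\neq k}(\gamma_\ell'(z))^\delta\,e^{2i\pi\theta\bullet P(\gamma_\ell)}\,u(\gamma_\ell z)\qquad\text{and}\qquad|u(z)|=\sum_{\ell\neq k}(\gamma_\ell'(z))^\delta\,|u(\gamma_\ell z)|,$$
so, the weights being positive, every summand $e^{2i\pi\theta\bullet P(\gamma_\ell)}u(\gamma_\ell z)$ is a positive multiple of $u(z)$; taking arguments gives the cocycle relation $\varphi(z)=e^{2i\pi\theta\bullet P(\gamma_\ell)}\varphi(\gamma_\ell z)$ for all $z\in\Lambda\cap\D_k$ and $\ell\neq k$. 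Iterating this along the periodic orbit of the fixed point $x_\alpha\in\D_j$ of $\gamma_\alpha$, for $\alpha\in\mathscr{W}_N^j$ with $\alpha_1=m+j$ — the admissibility condition on $\alpha$ being exactly what makes the branch applied at each step legal — and using that $P$ is a homomorphism, one obtains $\varphi(x_\alpha)=e^{2i\pi\theta\bullet P(\gamma_\alpha)}\varphi(x_\alpha)$; since $|\varphi(x_\alpha)|=1$ this gives $\theta\bullet P(\gamma_\alpha)\in\Z$. Such words $\gamma_\alpha$ run over all conjugacy classes of $\Gamma$; in particular each generator $\gamma_i$ occurs (as the length-one word $(i)$, based at the disc containing its attracting fixed point), so $\theta\bullet P(\gamma_i)\in\Z$ for $i=1,\dots,m$. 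As $P:\Gamma\to\Z^m$ is surjective and $\Gamma$ is free on $\gamma_1,\dots,\gamma_m$, the vectors $P(\gamma_1),\dots,P(\gamma_m)$ generate $\Z^m$, whence $\theta\bullet v\in\Z$ for all $v\in\Z^m$, i.e. $\theta\in\Z^m$.

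The main obstacle I anticipate is the interface between the holomorphic setting in which $L_\Gamma(s,\theta)$ is defined — a nuclear operator on the Bergman space $H^2(\Omega)$ — and the real, limit-set thermodynamic formalism where RPF and the triangle-inequality rigidity naturally live: one needs that the Bergman-space eigenvalue $1$ of $\lt_{\delta,0}$ is the RPF leading eigenvalue (so that $h$ and $\nu$ are at one's disposal) and that the eigenfunction $u$ restricts to a non-identically-zero continuous solution of the eigenvalue equation on $\Lambda$. Both are standard for Bowen--Series transfer operators but require a little care; everything downstream (the equality-case argument and the homological bookkeeping over closed geodesics) is then elementary.
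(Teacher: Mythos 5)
Your proposal is correct and follows essentially the same route as the paper: analytic continuation reduces the vanishing of $L_\Gamma(\delta,\theta)$ to $1$ being an eigenvalue of the twisted transfer operator, the Ruelle--Perron--Frobenius theorem at $s=\delta$ plus the equality case of the triangle inequality forces a unimodular cocycle relation on the limit set, and evaluating at fixed points of the generators $\gamma_1,\dots,\gamma_m$ together with the fact that $P(\gamma_1),\dots,P(\gamma_m)$ is a $\Z$-basis of $H^1(X,\Z)\simeq\Z^m$ yields $\theta\in\Z^m$. The only (inessential) differences are that you extract constancy of $|u|$ via the conformal eigenmeasure $\nu$ rather than the paper's normalized operator $\mathcal{M}_\delta$ and maximum-point/density argument, and you close the cocycle along periodic words directly instead of via a continuous lift $V$; both variants are routine within the same RPF framework.
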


\begin{proof}
Obviously if $\theta \in \Z^m$, then $L_\Gamma(s,\theta)=Z_\Gamma(s,\theta)$ and vanishes at $s=\delta$.
The converse will follow from a convexity argument that is similar to what has been used by Parry and Pollicott \cite{ParryPollicott2} to analyze 
dynamical Ruelle zeta functions on the line $\{\Re(s)=1\}$, see chapter 5. First we need to recall the usual "normalizing trick" which
is essential in the latter part of the argument. By the Ruelle-Perron-Frobenius Theorem (see \cite[Theorem~2.2]{ParryPollicott2}), the operator
$$\lt_{\delta,0}:H^2(\Omega)\rightarrow H^2(\Omega)$$
has $1$ as a simple eigenvalue and the associated eigenspace is spanned by a real-analytic function $H$ which satisfies
$H(x)>0$ for all $x \in \Lambda(\Gamma)$. By setting (we work on $\Lambda(\Gamma)$)
$$\mathcal{M}_\delta(F)(x):=\sum_{\ell \neq k}e^{ g_\ell(x)} F(\gamma_\ell x),\ x\in I_k\cap \Lambda(\Gamma),$$
where
$$g_\ell(x)=\delta \log(\gamma'_\ell (x))-\log H(x)+\log H(\gamma_\ell x),$$
we obtain an operator 
$$\mathcal{M}_\delta:C^0(\Lambda(\Gamma))\rightarrow C^0(\Lambda(\Gamma))$$
which satisfies $\mathcal{M}_\delta ({\bf 1})={\bf 1 }$. Assume now that $L_\Gamma(\delta,\theta)=0$ for some $\theta \in \R^m$.
Then $\lt_{\delta,\theta}$ has $1$ as an eigenvalue and pick an associated non trivial eigenfunction $W$, obviously continuous 
on $\Lambda(\Gamma)$. By writing
$$H^{-1}\lt_{\delta,\theta}(H. (H^{-1}W))=H^{-1}W, $$
we deduce that 
\begin{equation}
\label{convex1}
\sum_{\ell \neq k}e^{ g_\ell(x)} e^{2i\pi \theta \bullet P(\gamma_\ell)}\widetilde{W}(\gamma_\ell x)
=\widetilde{W}(x),\ x\in I_k\cap \Lambda(\Gamma),
\end{equation}
where we have set
$$\widetilde{W}(x)=H^{-1}(x) W(x). $$
Choosing $x_0 \in \Lambda(\Gamma)$ (say in $I_k\cap \Lambda(\Gamma))$) such that 
$$\vert\widetilde{W}(x_0)\vert =\sup_{\xi \in \Lambda(\Gamma)} \vert\widetilde{W}(\xi)\vert,$$
we get by the triangle inequality
$$\sup_{\xi \in \Lambda(\Gamma)} \vert\widetilde{W}(\xi)\vert\leq 
\mathcal{M}_\delta(  \vert\widetilde{W}\vert)(x_0)\leq  \sup_{\xi \in \Lambda(\Gamma)} \vert\widetilde{W}(\xi)\vert.$$
The same conclusion holds when iterating $\mathcal{M}_\delta$ so that for all $N\geq 0$, we have
$$\sup_{\xi \in \Lambda(\Gamma)} \vert\widetilde{W}(\xi)\vert=
\mathcal{M}_\delta^N(  \vert\widetilde{W}\vert)(x_0).$$
Because $\mathcal{M}^N_\delta$ are normalized, this forces
$$\sup_{\xi \in \Lambda(\Gamma)} \vert\widetilde{W}(\xi)\vert= \vert \widetilde{W}(\gamma_\alpha x_0)\vert$$
for all words $\alpha \in \mathscr{W}_N^k$. By density in $\Lambda(\Gamma)$ as $N\rightarrow +\infty$ of the set of inverse images
$\{ \gamma_\alpha x_0 \}_{\alpha \in \mathscr{W}_N^k}$, we deduce that $\vert  \widetilde{W}\vert$ is constant on $\Lambda(\Gamma)$.
We further assume that 
$$\vert  \widetilde{W}\vert=1.$$
By strict convexity of the unit euclidean ball in $\C$, we deduce from (\ref{convex1}) that for all $\ell \neq k$, we have
$$e^{2i\pi \theta \bullet P(\gamma_\ell)} \widetilde{W}\circ \gamma_\ell(x)=\widetilde{W}(x),\ x \in x\in I_k\cap \Lambda(\Gamma).$$
Writing 
$$\widetilde{W}(x)=e^{2i\pi V(x)},$$
where $V:\Lambda(\Gamma)\rightarrow \R$ is a continuous lift, we end up with the identity
($ x\in I_k\cap \Lambda(\Gamma)$, $\ell\neq k$)
\begin{equation}
 \label{convex2}
 \theta \bullet P(\gamma_\ell)=V(x)-V(\gamma_\ell x)+M_{x,\ell},
\end{equation}
where $M_{x,\ell}$ is $\Z$-valued. Now for each $k=1,\ldots,m$, let $x_k \in I_{m+k}$ be the unique attracting fixed point of 
$$\gamma_k:I_{m+k}\rightarrow I_{m+k}.$$
We get therefore from (\ref{convex2}) that for all $k=1,\ldots,m$
\begin{equation}
\label{convex3}
 \theta \bullet P(\gamma_k)\in \Z.
\end{equation}
Since $\Gamma$ is a free group on $m$ elements generated by $\gamma_1,\ldots,\gamma_m$, then
$$(P(\gamma_1),\ldots,P(\gamma_m))$$
is a $\Z$-basis of $H^1(X,\Z)\simeq \Z^m$. As a consequence, the $m\times m$ matrix whose rows are given by the vectors
$P(\gamma_1),\ldots, P(\gamma_m)$ has determinant $\pm 1$ and is thus invertible with integer coefficients : this implies
by (\ref{convex3}) that $\theta \in \Z^m$, the proof is done. 
\end{proof}

A direct corollary, which is what we will actually use in the proof of Theorem \ref{main3}, is the following.
\begin{cor}
\label{smallz}
Using the above notations, for all $\epsilon>0$ small enough, one can find a complex neighbourhood $\mathcal{V}$ of $\delta$
such that for all $s\in \mathcal{V}$ and $\theta \in \R^m$, 
$$L_\Gamma(s,\theta)=0 \Rightarrow \mathrm{dist}(\theta, \Z^m)< \epsilon.$$
\end{cor}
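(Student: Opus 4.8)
The plan is to deduce the corollary from Proposition~\ref{nonv} by a soft compactness-and-continuity argument, exploiting two structural features of $L_\Gamma(s,\theta)$: its $\Z^m$-periodicity in the variable $\theta\in\R^m$, and its joint continuity in $(s,\theta)$.

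First I would record the periodicity. The operator $\lt_{s,\theta}$ depends on $\theta$ only through the scalars $e^{2i\pi\theta\bullet P(\gamma_\ell)}$, and since $P(\gamma_\ell)\in\Z^m$ these are unchanged when $\theta$ is translated by an element of $\Z^m$. Hence $\theta\mapsto\lt_{s,\theta}$, and therefore $\theta\mapsto L_\Gamma(s,\theta)=\det(I-\lt_{s,\theta})$, descends to a function on the compact torus $\R^m/\Z^m$. Next I would invoke the joint continuity: by construction $(s,\theta)\mapsto\lt_{s,\theta}$ is holomorphic from $\C\times\C^m$ into the trace-class operators on $H^2(\Omega)$, and since the Fredholm determinant is continuous for the trace norm, $(s,\theta)\mapsto L_\Gamma(s,\theta)$ is jointly continuous on $\C\times\C^m$ (indeed holomorphic).

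With these in hand the argument is immediate. Fix $\epsilon>0$ small enough that $K_\epsilon:=\{\theta\in\R^m/\Z^m:\mathrm{dist}(\theta,\Z^m)\geq\epsilon\}$ is a nonempty compact subset of the torus; note $K_\epsilon$ does not meet the class of $\Z^m$. By Proposition~\ref{nonv}, $L_\Gamma(\delta,\theta)\neq 0$ for every $\theta\in K_\epsilon$, so by continuity of $\theta\mapsto|L_\Gamma(\delta,\theta)|$ and compactness of $K_\epsilon$ there is $c>0$ with $|L_\Gamma(\delta,\theta)|\geq c$ on $K_\epsilon$. Using joint continuity on the compact set $\{\delta\}\times K_\epsilon$, one obtains a complex neighbourhood $\mathcal{V}$ of $\delta$ such that $|L_\Gamma(s,\theta)|\geq c/2>0$ for all $(s,\theta)\in\mathcal{V}\times K_\epsilon$. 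Consequently, if $s\in\mathcal{V}$ and $\theta\in\R^m$ satisfy $L_\Gamma(s,\theta)=0$, then by $\Z^m$-periodicity the class of $\theta$ in $\R^m/\Z^m$ lies outside $K_\epsilon$, i.e. $\mathrm{dist}(\theta,\Z^m)<\epsilon$, which is the claim.

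I do not expect any real obstacle here: the content is entirely in Proposition~\ref{nonv}, and the corollary is a routine "open condition" upgrade. The only points that merit a sentence of justification are the reduction to the compact torus (without which the "far from $\Z^m$" parameter set would fail to be compact and the uniform lower bound could not be extracted) and the joint continuity of $L_\Gamma(s,\theta)$ in the trace-norm topology, which follows at once from the holomorphic dependence of $\lt_{s,\theta}$ on $(s,\theta)$ used repeatedly in this section.
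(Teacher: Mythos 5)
Your proof is correct and uses exactly the same ingredients as the paper's: $\Z^m$-periodicity of $\theta\mapsto L_\Gamma(s,\theta)$, joint continuity in $(s,\theta)$, compactness, and Proposition~\ref{nonv}. The only cosmetic difference is that the paper argues by contradiction with a sequential-compactness extraction (taking $s_j\to\delta$, $\theta_j\to\widetilde\theta$ with $\mathrm{dist}(\widetilde\theta,\Z^m)\geq\epsilon$, and then noting $L_\Gamma(\delta,\widetilde\theta)=0$ contradicts the proposition), whereas you phrase the same compactness argument directly via a uniform lower bound $|L_\Gamma|\geq c/2$ on $\mathcal{V}\times K_\epsilon$; these are equivalent renderings of one and the same argument.
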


\begin{proof}
Argue by contradiction. Fix some $\epsilon>0$. If the above statement is not true, then one can find a sequence 
 $$(s_j,\theta_j)\in \C\times \R^m$$
 such that for all $j$ we have  $L_\Gamma(s_j,\theta_j)=0$ and $\mathrm{dist}(\theta_j,\Z^m)\geq\epsilon$ and $\lim_j s_j=\delta$.
 Using the $\Z^m$-periodicity of $L_\Gamma(s,\theta)$ with respect to $\theta$, we can assume that $\theta_j$ remains in a bounded subset of
 $\R^m$ and use compactness to extract a subsequence such that $\theta_j\rightarrow \ \widetilde{\theta}$ with 
 $\mathrm{dist}(\widetilde{\theta},\Z^m)\geq\epsilon$. We have $L_\Gamma( \delta,\widetilde{\theta})=0$, which is a contradiction with Proposition \ref{nonv}. \end{proof}

\subsection{Proof of Theorem \ref{main3}} 
We are now ready to prove Theorem \ref{main3}. We go back to the holomorphic map defined on $\C\times \C^m$
$$(s,\theta)\mapsto L_\Gamma(s,\theta).$$
Since $L_\Gamma(\delta,0)=0$ and we have (recall that $s=0$ is a simple zero of $Z_\Gamma(s)$)
$$\partial_s L_\Gamma(\delta,0)=Z'_\Gamma(\delta)\neq 0,$$
we can apply the Holomorphic implicit function theorem, which tells us that there exists an open set $\mathcal{O}\subset \C$ with $\delta \in \mathcal{O}$
and some $\epsilon>0$ such that for all $(s,\theta)\in \mathcal{O}\times B_\infty( 0,\epsilon)$,
$$ L_\Gamma(s,\theta)=0 \Longleftrightarrow s=\phi(\theta),$$
where $\phi:B_\infty( 0,\epsilon)\rightarrow \mathcal{O}$ is a real-analytic map and 
$$B_\infty( 0,\epsilon):=\{ x=(x_1,\ldots,x_r) \in \R^r\ :\ \max_\ell \vert x_\ell\vert < \epsilon\}.$$
Using Corollary \ref{smallz} with the above $\epsilon$, we deduce that if $s\in \mathcal{U}:=\mathcal{O}\cap \mathcal{V}$ is a such that
$$L_\Gamma(s,\theta)=0,$$
for some $\theta \in \R^m$, then $\mathrm{dist}(\theta,\Z^m)<\epsilon$, and $s=\phi(\widetilde{\theta})$
where $\widetilde{\theta}=\theta\ \mathrm{mod}\ \Z^m$ and $\widetilde{\theta}\in B_\infty( 0,\epsilon)$.

\bigskip  Now pick $\varphi \in C_0^\infty(\mathcal{U})$, using the factorization formula (\ref{factor1}), we observe that provided $\epsilon$ is taken small enough
we have
$$\sum_{\lambda \in \mathcal{R}_{X_j}\cap \mathcal{U}} \varphi(\lambda)
=\sum_{\vert k_1\vert <\epsilon N_1^{(j)},\ldots,\vert k_r\vert <\epsilon N_r^{(j)}}
\varphi \circ \phi \left(\frac{k_1}{N_1^{(j)}},\ldots,\frac{k_r}{N_r^{(j)}},0,\ldots,0 \right ).$$

Next we will apply the following Lemma.
\begin{lem}
 Fix $\epsilon>0$ and assume that $\psi$ is a $C^\infty(\R^r)$, compactly supported function on $B_\infty( 0,\epsilon)\subset \R^r$.
 Then we have,
 $$\lim_{j\rightarrow \infty}\frac{1}{N_1^{(j)}\ldots N_r^{(j)}}\sum_{\vert k_1\vert \leq \epsilon N_1^{(j)},\ldots,\vert k_r\vert \leq \epsilon N_r^{(j)}} 
 \psi\left(\frac{k_1}{N_1^{(j)}},\ldots,\frac{k_r}{N_r^{(j)}}\right)=\int_{B_\infty( 0,\epsilon)}\psi(x)dx.$$
\end{lem}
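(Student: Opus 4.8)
The plan is to recognize the sum as a Riemann sum for the integral $\int_{B_\infty(0,\epsilon)}\psi(x)\,dx$ and invoke uniform continuity of $\psi$ together with the standard convergence of Riemann sums. More precisely, write $M_\ell := M_\ell^{(j)} := \lfloor \epsilon N_\ell^{(j)}\rfloor$, so that the index set $\{|k_\ell|\le \epsilon N_\ell^{(j)}\}$ is exactly $\{-M_\ell,\ldots,M_\ell\}$ in each coordinate. Partition $\R^r$ into the half-open cubes $Q_k := \prod_{\ell=1}^r \big[\tfrac{k_\ell}{N_\ell^{(j)}}, \tfrac{k_\ell+1}{N_\ell^{(j)}}\big)$, each of volume $\big(N_1^{(j)}\cdots N_r^{(j)}\big)^{-1}$, indexed by $k\in\Z^r$. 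Then
$$\frac{1}{N_1^{(j)}\cdots N_r^{(j)}}\sum_{|k_\ell|\le \epsilon N_\ell^{(j)}} \psi\!\left(\frac{k_1}{N_1^{(j)}},\ldots,\frac{k_r}{N_r^{(j)}}\right) = \sum_{k} \int_{Q_k} \psi\!\left(\frac{k_1}{N_1^{(j)}},\ldots,\frac{k_r}{N_r^{(j)}}\right) dx,$$
where the right-hand sum is over all $k\in\Z^r$ once we observe that the terms with $|k_\ell|>\epsilon N_\ell^{(j)}$ contribute nothing: for such $k$ the corner point $(k_1/N_1^{(j)},\ldots)$ lies outside $B_\infty(0,\epsilon)$ — actually a small care is needed here, which I address below.

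First I would make that last point precise. Since $\psi$ is supported in $B_\infty(0,\epsilon)$, there is $\epsilon' < \epsilon$ with $\mathrm{supp}(\psi)\subset \overline{B_\infty(0,\epsilon')}$, and for $j$ large enough every cube $Q_k$ meeting $\mathrm{supp}(\psi)$ has all its corners with $|k_\ell/N_\ell^{(j)}| \le \epsilon'+ 1/\min_\ell N_\ell^{(j)} < \epsilon$, hence is included in the summation range $|k_\ell|\le \epsilon N_\ell^{(j)}$; conversely any $k$ in the summation range whose cube does not meet $\mathrm{supp}(\psi)$ contributes a zero term since $\psi$ evaluated at its corner vanishes. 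Therefore for $j$ large the finite sum over $|k_\ell|\le\epsilon N_\ell^{(j)}$ equals the sum over all $k\in\Z^r$ of $\int_{Q_k}\psi(k_1/N_1^{(j)},\ldots)\,dx$, and only finitely many terms (those with $Q_k$ meeting $\mathrm{supp}(\psi)$) are nonzero.

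Next I would estimate the difference with $\int_{\R^r}\psi(x)\,dx = \int_{B_\infty(0,\epsilon)}\psi(x)\,dx$. For $x\in Q_k$ we have $\|x - (k_1/N_1^{(j)},\ldots,k_r/N_r^{(j)})\|_\infty \le \max_\ell 1/N_\ell^{(j)} \le 1/\min_\ell N_\ell^{(j)} =: \eta_j$, and $\eta_j\to 0$ because $\min\{N_1^{(j)},\ldots,N_r^{(j)}\}\to+\infty$ by the standing reindexing assumption. Hence
$$\left| \sum_k \int_{Q_k}\psi\!\left(\tfrac{k_1}{N_1^{(j)}},\ldots\right)dx - \int_{\R^r}\psi(x)\,dx \right| \le \sum_k \int_{Q_k}\left|\psi\!\left(\tfrac{k_1}{N_1^{(j)}},\ldots\right) - \psi(x)\right|dx \le \omega_\psi(\eta_j)\cdot \mathrm{vol}\big(B_\infty(0,\epsilon+\eta_j)\big),$$
where $\omega_\psi$ is the modulus of continuity of $\psi$ (which is uniformly continuous, being $C^\infty$ with compact support). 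As $j\to\infty$ the right-hand side tends to $0$, which gives the claimed limit.

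I do not expect any genuine obstacle here — this is a routine Riemann-sum convergence. The only point requiring a little attention is the bookkeeping at the boundary of the summation range (matching the discrete index set $|k_\ell|\le\epsilon N_\ell^{(j)}$ with the cubes that actually meet the support of $\psi$), which is why one uses that $\psi$ is compactly supported strictly inside $B_\infty(0,\epsilon)$ and that $\min_\ell N_\ell^{(j)}\to\infty$; without the latter the statement would fail for the fixed coordinates, but those have been separated off into the $N_{r+1},\ldots,N_m$ in the reindexing and do not appear in this lemma.
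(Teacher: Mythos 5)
Your argument is correct, but it takes a genuinely different route from the paper. The paper proves this lemma via the Poisson summation formula: after noting (as you do) that the sum over $|k_\ell|\le\epsilon N_\ell^{(j)}$ equals the sum over all $k\in\Z^r$ because $\psi$ is compactly supported inside $B_\infty(0,\epsilon)$, the authors apply Poisson summation to the rescaled function $x\mapsto\psi(x_1/N_1^{(j)},\ldots,x_r/N_r^{(j)})$. The $k=0$ dual term is exactly $\int\psi$, and the nonzero dual terms are $\widehat{\psi}(2\pi N_1^{(j)}k_1,\ldots,2\pi N_r^{(j)}k_r)$; since $\widehat\psi$ is Schwartz, these vanish faster than any power of $1/\min_\ell N_\ell^{(j)}$. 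Your approach instead treats the sum directly as a Riemann sum over the mesh of anisotropic cubes $Q_k$, bounds the discrepancy by the modulus of continuity $\omega_\psi(\eta_j)$ with $\eta_j=1/\min_\ell N_\ell^{(j)}\to 0$, and concludes. The Riemann-sum route is more elementary and only uses uniform continuity of $\psi$ (it would in fact prove the statement for $\psi$ merely continuous with compact support), whereas the Poisson route gives for free the quantitative remainder $O_\alpha\bigl((\min_\ell N_\ell^{(j)})^{-\alpha}\bigr)$ for every $\alpha$, which the authors record even though the lemma as stated asserts only the limit. Your bookkeeping at the boundary of the index set is correct and is precisely the point that makes both proofs go through.
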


\begin{proof}
Use the Poisson summation formula to write
$$\frac{1}{N_1^{(j)}\ldots N_r^{(j)}}\sum_{\vert k_1\vert \leq \epsilon N_1^{(j)},\ldots,\vert k_r\vert \leq \epsilon N_r^{(j)}} 
 \psi\left(\frac{k_1}{N_1^{(j)}},\ldots,\frac{k_r}{N_r^{(j)}}\right)=\frac{1}{N_1^{(j)}\ldots N_r^{(j)}}\sum_{k\in \Z^r} 
 \psi\left(\frac{k_1}{N_1^{(j)}},\ldots,\frac{k_r}{N_r^{(j)}}\right)$$
 $$=\sum_{k\in \Z^r,k\neq 0}\widehat{\psi}(2\pi N_1k_1,\ldots,2\pi N_rk_r)+\int_{\R^r}\psi(x)dx,$$
 where $\widehat{\psi}$ is as usual the Fourier transform defined by
 $$\widehat{\psi}(\xi)=\int_{\R^r}\psi(x) e^{-i\xi.x}dx. $$
  Since $\widehat{\psi}$ has rapid decay (Schwartz class), a simple summation argument
 gives  
 $$\frac{1}{N_1^{(j)}\ldots N_r^{(j)}}\sum_{\vert k_1\vert \leq \epsilon N_1^{(j)},\ldots,\vert k_r\vert \leq \epsilon N_r^{(j)}} 
 \psi\left(\frac{k_1}{N_1^{(j)}},\ldots,\frac{k_r}{N_r^{(j)}}\right)$$
 $$=\int \psi(x)dx+O_\alpha \left (\frac{1}{(\min \{N_1^{(j)},\ldots N_r^{(j)} \})^\alpha } \right),$$
 for all integers $\alpha$, and the proof is done.
\end{proof}
 
Applying the above lemma with $\psi(x)=\varphi\circ \phi(x,0)$ we get as $j\rightarrow +\infty$,
$$\lim_{j\rightarrow +\infty} \frac{1}{\vert \G_j \vert} \sum_{\lambda \in \mathcal{R}_{X_j}\cap \mathcal{U}} \varphi(\lambda)=
N_{r+1}\ldots N_m \int_{\R^r} \varphi\circ \phi(x,0)dx$$
$$:=\int \varphi d\mu,$$
where $\phi(x,0)=\phi(x_1,\ldots,x_r,0,\ldots,0)$.

\bigskip 
The measure $\mu$ is nothing but the {\it push-forward of Lebesgue measure} on the ball $B_\infty(0,\epsilon)$ via the map $\phi$.
It is clear from the above formula that $\delta$ belongs to the support of $\mu$ since $\phi(0)=\delta$. 
What remains to show is:
\begin{itemize}
\item The maps $x\mapsto \phi(x,0)$ are real valued so that all the resonances in the vicinity of $s=\delta$ are actually real.
\item The maps $x\mapsto \phi(x,0)$ are non constant.
\item The corresponding push-forward measure $\mu$ is absolutely continuous.
\end{itemize}

Since all the resonances in $\mathcal{R}_{X_j}$ (also all zeros of $s\mapsto L_\Gamma(s,\theta)$ for $\theta \in \R^m$) are in the half plane $\{ \Re(s)\leq \delta \}$, we must have $\nabla \Re(\phi)(0)=0$. However, one can actually show that 
$\Im(\phi)=0$ identically. Indeed, recall that by using the same ideas as in $\S 2$, one can show that for $\Re(s)>\delta$ we have for all $\theta \in \Z^m$,
$$L_\Gamma(s,\theta)=\mathrm{exp}\left(-\sum_{\mathcal{C},k,n} \frac{\chi_\theta(\mathcal{C}^k)}{k}
e^{-(s+n)kl(\mathcal{C})} \right),$$
where the sum runs over prime conjugacy classes. By complex conjugation and uniqueness of analytic continuation, we have first the identity
valid for all $s\in \C$ and $\theta \in \Z^m$, 
$$\overline{L_\Gamma(s,\theta)}=L_\Gamma(\overline{s},-\theta),$$
which implies that for all $\theta \in B_\infty(0,\epsilon)$, we have 
$$\phi(-\theta)=\overline{\phi(\theta)}.$$ 
On the other hand, if $\mathcal{C}\in \mathcal{P}$, then $\mathcal{C}^{-1}\in \mathcal{P}$ and $l(\mathcal{C}^{-1})=l(\mathcal{C})$, while
$\chi_\theta( \mathcal{C}^{-1})=\chi_{-\theta}( \mathcal{C})$. Therefore "time reversal" invariance of $\mathcal{P}$ yields another identity (again use unique continuation) valid for all $s\in \C$ and $\theta \in \Z^m$,
$$L_\Gamma(s,\theta)=L_\Gamma(s,-\theta).$$
It shows that for all $\theta \in B_\infty(0,\epsilon)$, $\overline{\phi}(\theta)=\phi(-\theta)=\phi(\theta)$, hence $\phi$ is real valued. This fact was observed in previous works
related to prime orbit counting (in homology classes) for geodesics flows, see for example \cite[Chapter~12]{ParryPollicott2}. 
By the same arguments as above, we know that the Hessian matrix $\nabla^2 \Re(\phi)(0)$ must be negative. Because the zeta functions $Z_{\Gamma_j}(s)$ have all a simple zero at $s=\delta$, the maps $x\mapsto \phi(x,0)$ have to be non constant.

One can actually show, using that the length spectrum of $X$ is not a lattice, that (see for example the arguments in \cite[page~199]{ParryPollicott2}) we have
$$\det\left (\nabla^2 \Re(\phi)(0)\right)<0.$$
We point out that the non-deneneracy of this critical point has historically played an important role on works related to prime orbit counting
in homology classes, see \cite{Anantharaman, KatSun,Lalley2, PhillipsSarnak,Pollicott2}.
Since each map $(x_1,\ldots,x_r)\mapsto \phi(x_1,\dots,x_r,0)\in \R$ is non-constant, the (closure of the) image is a non-trivial interval of the type $I=[a,\delta]$ 
for some $a<\delta$. Moreover, because 
$$(x_1,\ldots,x_r)\mapsto F(x):=\phi(x_1,\dots,x_r,0)$$
is real analytic (and non-constant),
the set of points $x=(x_1,\dots,x_r)\in B_\infty(0,\epsilon)$ such $\nabla F(x)=0$ has zero lebesgue measure. It follows
from standard arguments (see for example in \cite{Pono}) that $F$ has the "$0$-set" property: the preimage of each set of
zero Lebesgue measure has zero Lebesgue measure. We can apply Radon-Nikodym  theorem and conclude
that $\mu$ is absolutely continuous with respect to Lebesgue on $I$, the proof is complete. 

It is possible to describe the Radon-Nikodym derivative $\frac{d\mu}{dm}(u)$ in the vicinity of $\delta$, 
where $m$ is Lebesgue measure on $I$. Indeed, we know from the above that locally, 
$$\phi(x)=\delta-Q(x)+O(\Vert x\Vert^3),$$ 
where $Q(x)$ is a positive definite quadratic form.

The Morse lemma implies that for all $ \epsilon > 0 $ small enough there an open neighbourhood $ \tilde{U}\subset \mathbb{R}^{r} $ of $ 0 $ and a diffeomorphism
$$
\Psi : B_{\infty}(0,\varepsilon) \to  \tilde{U}, \quad (x_{1},\dots, x_{r})\mapsto (y_{1},\dots, y_{r})
$$
such that $ \Psi(0)=0 $ and $ \phi\circ \Psi^{-1}(y) = \delta - y_{1}^{2}-\cdots-y_{r}^{2}. $ Therefore, for any $ \varphi\in C^{\infty}_{0}(\mathcal{U}) $ we have
\begin{align*}
\int \varphi d\mu &= \int_{\R^r} \varphi\circ \phi(x,0)dx\\
&= \int_{\tilde{U}} \varphi(\delta-y_{1}^{2}-\cdots-y_{r}^{2})\cdot \vert D\Psi^{-1}(y)\vert dy\\
& \asymp\int_{\tilde{U}} \varphi(\delta-y_{1}^{2}-\cdots-y_{r}^{2}) dy,
\end{align*}
where $ \vert D\Psi^{-1}(y)\vert $ is the Jacobian determinant. Choosing polar coordinates yields
$$ \int \varphi d\mu \asymp\int_{\mathbb{R}^{+}} \varphi(\delta-R^{2}) R^{r-1}dR. $$
With one last change of variables $ R\mapsto \xi= R^{2} $ we obtain
$$ \int \varphi d\mu \asymp\int_{\mathbb{R}^{+}} \varphi(\delta-\xi) \xi^{\frac{r-2}{2}}d\xi. $$
We conclude that there exists a constant $C>0$ such that for all $ u $ close enough to delta ($u<\delta$)
$$C^{-1}(\delta-u)^{\frac{r-2}{2}}\leq \frac{d\mu}{dm}(u)\leq C(\delta-u)^{\frac{r-2}{2}},$$
where $r$ is defined above as the number of unbounded cyclic factors in the sequence of abelian groups $\G_j$. 
In particular we observe a drastic difference in the density shape when $r=1,2$ and $r>2$.

We conclude this section on abelian covers by a remark on the case of elementary groups (which we have excluded so far). Given a non trivial hyperbolic isometry
$\gamma$ in $PSL_2(\R)$, we set $\Gamma=\langle \gamma \rangle$ and $X=\Gamma \backslash \hh$ the corresponding hyperbolic cylinder. It is easy to check that
all finite covers of $X$ are (obviously) abelian given by 
$$X_N=\Gamma_N \backslash \hh,\ \Gamma_N=\langle \gamma^N \rangle,$$
with $N\geq 1$. In that case, it is possible to compute explicitly (see Borthwick \cite{Borthwick} p. 179) the Selberg zeta function 
$$Z_{X_N}(s)=\prod_{k\geq 0}\left(1-e^{(s+k)Nl(\gamma)}\right)^2,$$
where $l(\gamma)$ is the length of $\gamma$. The zero-set of $Z_{X_N}(s)$ is therefore the half-lattice
$$\frac{2i\pi}{Nl(\gamma)}\Z-\N_0,$$
from which we can see that resonances accumulate as $N\rightarrow +\infty$ on the axis 
$$\{ \Re(s)=\delta=0\}.$$
Notice that resonances have multiplicity {\it two}, which explains why the perturbative argument used in the non elementary case doesn't work here.

\section{Zero-free regions for $L$-functions and explicit formulae}
The goal of this section is to prove the following result which will allow us to convert zero-free regions into upper bounds on sums
over closed geodesics. The results are completely general, but will be used in the last section on congruence subgroups.

\begin{propo}
\label{ExplicitF}
Fix $\overline{\alpha}>0$, $0\leq \sigma<\delta$ and $\varepsilon>0$. Then there exists a $C_0^\infty$ test function $\varphi_0$, with
$\varphi_0\geq 0$, $\mathrm{Supp}(\varphi_0)=[-1,+1]$ and such that that for $\rr$ non trivial, if $L_\Gamma(s,\rr)$ has no zeros in the rectangle
$$\{ \sigma \leq \Re(s) \leq 1\ \mathrm{and}\ \vert \Im(s)\vert \leq (\log T)^{1+\overline{\alpha}}\},$$
for some $T$ large enough, then we have 
$$\sum_{\mathcal{C},k} \chi_\rr(\mathcal{C}^k)
\frac{l(\mathcal{C})}{1-e^{kl(\mathcal{C})}}\varphi_0\left( \frac{kl(\mathcal{C})}{T}\right)=
O\left(d_\rr \log(d_\rr+1)e^{(\sigma+\varepsilon)T}\right),$$
where the implied constant is uniform in $T, d_\rr$.
\end{propo}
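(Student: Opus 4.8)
The plan is to run the classical ``explicit formula'' mechanism. Write $g(\xi):=\int_{\R}\varphi_0(u)e^{-u\xi}\,du$ for the two-sided Laplace transform of $\varphi_0$; since $\varphi_0\in C_0^\infty$ this is entire and $\varphi_0(x)=\frac{1}{2i\pi}\int_{(c)}g(\xi)e^{x\xi}\,d\xi$ for every vertical line $\{\Re(\xi)=c\}$. Recall from $\S 2$ that for $\Re(s)>\delta$ the Euler product for $L_\Gamma(s,\rr)$ converges absolutely and yields
$$\frac{L_\Gamma'}{L_\Gamma}(s,\rr)=\sum_{\mathcal{C}\in\mathcal{P}}\sum_{k\geq 1}\chi_\rr(\mathcal{C}^k)\,l(\mathcal{C})\,\frac{e^{-skl(\mathcal{C})}}{1-e^{-kl(\mathcal{C})}}.$$
The left-hand side of the Proposition is, up to sign, the smooth truncation at scale $T$ of a specialisation of this Dirichlet series. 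Inserting the inversion formula for $\varphi_0(kl(\mathcal{C})/T)$ and interchanging the two summations with the $\xi$--integral --- legitimate once $c$ is chosen so that the resulting $s$--contour lies in $\{\Re(s)>\delta\}$, using the prime orbit theorem and $|\chi_\rr|\leq d_\rr$ --- one rewrites the left-hand side as $\frac{T}{2i\pi}$ times a vertical contour integral of $\frac{L_\Gamma'}{L_\Gamma}(s,\rr)$ against a rescaled copy of $g$, along a line $\{\Re(s)=\sigma_0\}$ with $\delta<\sigma_0<1$ fixed. One then shifts the contour onto the broken path following $\{\Re(s)=\sigma+\varepsilon/3\}$ for $|\Im(s)|\leq(\log T)^{1+\overline{\alpha}}$, closed off by two horizontal segments at height $\pm(\log T)^{1+\overline{\alpha}}$ and returning to $\{\Re(s)=\sigma_0\}$ for larger $|\Im(s)|$; since $L_\Gamma(\cdot,\rr)$ is entire (Theorem~\ref{main1}) and, by hypothesis, zero-free in the rectangle, no residues are crossed.

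The construction of $\varphi_0$ is the crux, and is exactly where $\overline{\alpha}>0$ enters. One needs $\widehat{\varphi_0}$ to decay faster than any power of the frequency but at a controlled sub-exponential rate: given $c>0$, one wants $\varphi_0\in C_0^\infty$ with $\mathrm{Supp}(\varphi_0)=[-1,1]$, $\varphi_0\geq 0$, and $|g(a+ib)|\leq C_c\,e^{|a|}\exp(-c|b|/(\log(e+|b|))^{1+\overline{\alpha}})$ for all $a,b\in\R$. A non-zero compactly supported smooth function whose Fourier transform is majorised by $C\exp(-c|\xi|/(\log(e+|\xi|))^{1+\overline{\alpha}})$ exists if and only if the logarithmic integral $\int_{\R}\frac{|\xi|\,(\log(e+|\xi|))^{-1-\overline{\alpha}}}{1+\xi^2}\,d\xi$ is finite --- a Beurling / Denjoy--Carleman non-quasi-analyticity criterion --- and this integral converges \emph{precisely because} $\overline{\alpha}>0$; taking $\varphi_0=|\psi|^2$ for such a $\psi$ (with $\mathrm{Supp}(\psi)=[-1,1]$) yields non-negativity while preserving the decay up to constants. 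An ordinary $C^\infty$ bump would not suffice, since its Fourier transform decays only polynomially and the tail contributions below would then overwhelm the main term; one genuinely needs this Denjoy--Carleman-type regularity, and moreover $c$ must be taken strictly larger than $\sigma_0$ (e.g.\ $c=1$, which works since $\sigma_0<1$).

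With such a $\varphi_0$ fixed, the estimate splits into three pieces. On the central segment $\{\Re(s)=\sigma+\varepsilon/3,\ |\Im(s)|\leq(\log T)^{1+\overline{\alpha}}\}$ one bounds $\frac{L_\Gamma'}{L_\Gamma}(s,\rr)$ by a Borel--Carath\'eodory argument on a disc whose centre lies slightly to the right of $\{\Re(s)=\delta\}$ and whose radius is bounded below: there one uses the lower bound $|L_\Gamma(s,\rr)|\geq e^{-Cd_\rr}$, immediate from the convergent series for $\log L_\Gamma$, together with the upper bound $|L_\Gamma(s,\rr)|\leq C_1\exp(C_2 d_\rr\log(1+d_\rr)(1+|s|^2))$ of part $(2)$ of Theorem~\ref{main1}; since the disc may be taken to extend arbitrarily far to the right, where $L_\Gamma$ is automatically zero-free, there is always enough room. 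This gives $|\frac{L_\Gamma'}{L_\Gamma}(s,\rr)|\ll d_\rr\log(1+d_\rr)(\log T)^{2+2\overline{\alpha}}$ throughout the rectangle, uniformly in $\rr$. On that segment $|g|\ll e^{(\sigma+\varepsilon/3)T}\exp(-cT|\Im(s)|/(\log(e+T|\Im(s)|))^{1+\overline{\alpha}})$, so the $\Im(s)$--integral contributes a factor $O(1/T)$, and since $(\log T)^{2+2\overline{\alpha}}\leq e^{\varepsilon T/3}$ for $T$ large, this piece is $O(d_\rr\log(1+d_\rr)e^{(\sigma+\varepsilon)T})$ --- the asserted bound. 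On the horizontal segments and the far tails one has $\Re(s)\leq\sigma_0$, so $|g|\ll e^{\sigma_0 T}\exp(-cT(\log T)^{1+\overline{\alpha}}/(\log(e+T(\log T)^{1+\overline{\alpha}}))^{1+\overline{\alpha}})=e^{(\sigma_0-c)T(1+o(1))}$, which is exponentially small as $c>\sigma_0$; combined with the bound $O(d_\rr)$ for $\frac{L_\Gamma'}{L_\Gamma}$ on the part with $\Re(s)>\delta$ and the Borel--Carath\'eodory bound on the remaining bounded portion $\sigma+\varepsilon/3\leq\Re(s)<\delta$, these contributions are negligible. Summing the three pieces proves the Proposition, with all implied constants independent of $T$ and $d_\rr$ because every constant entering the argument is (the Borel--Carath\'eodory constants, the constants $C_1,C_2$, and $\int_{\R}\exp(-c|u|/(\log(e+|u|))^{1+\overline{\alpha}})\,du$).

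The main obstacle is therefore the test-function construction of the second paragraph: producing $\varphi_0\in C_0^\infty([-1,1])$, $\varphi_0\geq 0$, with Fourier transform decaying like $\exp(-c|\xi|/(\log|\xi|)^{1+\overline{\alpha}})$ for $c$ as large as needed, and pinning down that $\overline{\alpha}>0$ is exactly the non-quasi-analytic regime in which such functions exist. Once this is available, the contour shift, the Borel--Carath\'eodory estimate, and the bookkeeping of the three pieces are routine.
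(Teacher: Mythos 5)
Your proposal follows the same architecture as the paper's proof: represent the weighted sum over closed geodesics as a contour integral of $L_\Gamma'/L_\Gamma$ against the Laplace transform of $\varphi_0$ on a line to the right of $\delta$, push the contour onto a broken path that dips to $\Re(s)=\sigma+\varepsilon/3$ inside the zero-free rectangle, control $L_\Gamma'/L_\Gamma$ there by a Borel--Carath\'eodory argument on a disc that extends far to the right, and control the far-field and horizontal pieces via the Fourier decay of $\varphi_0$. All of that is what the paper does, and your bookkeeping is consistent with the conclusion. The one genuinely different (and slightly heavier) choice is in the test-function construction, and it is worth pointing out the trade-off precisely.

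The paper chooses the Fourier decay exponent $\alpha$ \emph{strictly smaller} than the exponent $\overline{\alpha}$ appearing in the height $(\log T)^{1+\overline{\alpha}}$ of the zero-free rectangle. With $R=(\log T)^{1+\overline{\alpha}}$ the far-field factor then becomes
$$\exp\!\left(-C_2\,\frac{RT}{(\log(RT))^{1+\alpha}}\right)\asymp\exp\!\left(-C_2\,T\,(\log T)^{\overline{\alpha}-\alpha}\right),$$
which is \emph{super}-exponentially small in $T$. This means the constant $C_2$ produced by the construction is irrelevant; any $C_2>0$ kills $e^{TA}$ for any fixed $A>\delta$. The paper can therefore afford a completely explicit, self-contained construction of $\varphi_0$ (the infinite convolution of scaled indicator functions from Lemma~\ref{Fourier1}), which yields an unspecified $C_2$ and manifest positivity. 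You instead keep $\alpha=\overline{\alpha}$, so your horizontal/far-field decay is only $\exp(-cT)$, and you must insist on $c>\sigma_0$. You correctly identify that the Beurling--Malliavin multiplier theorem (the log-integral criterion) allows $c$ to be taken arbitrarily large for support $[-1,1]$, and that $\overline{\alpha}>0$ is exactly the convergence threshold. This works, but it leans on the full strength of Beurling--Malliavin, whereas the paper's choice of $\alpha<\overline{\alpha}$ renders the constant immaterial and lets the authors get by with the elementary Denjoy--Carleman-type convolution construction. (Also note that $\varphi_0=|\psi|^2$ replaces the decay constant by roughly $c/2$, so one should start with $c>2\sigma_0$; and $\widehat{|\psi|^2}$ is a convolution of $\widehat\psi$ with itself, so the sub-exponential decay rate survives only with this loss. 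Your phrase ``preserving the decay up to constants'' glosses over this.)

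Two smaller technical remarks. First, your horizontal segments sit at $\pm(\log T)^{1+\overline{\alpha}}$, i.e.\ at the full height of the hypothesized zero-free rectangle; but the Borel--Carath\'eodory estimate of Proposition~\ref{Derivative1} is only valid on the strictly smaller rectangle of height $C(\varepsilon)(\log T)^{1+\overline{\alpha}}$ with $C(\varepsilon)<1$, because the Carath\'eodory disc must sit inside the zero-free region. You should place the horizontal segments at $\pm C(\varepsilon)(\log T)^{1+\overline{\alpha}}$; this changes nothing qualitatively. Second, your power of $\log T$ in the $L'/L$ bound ($(\log T)^{2+2\overline{\alpha}}$) differs from the paper's $R^6$; the disc in the paper's Proposition~\ref{Derivative1} is centred far to the right (at roughly $R^2/(2(1-\sigma))$) with a comparably large radius, whereas yours is centred at $O(1)$. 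Either geometry is acceptable: any polynomial power of $\log T$ is absorbed into $e^{\varepsilon T/3}$. Overall the proposal is correct; it replaces the paper's elementary, self-contained test-function construction and the $\alpha<\overline{\alpha}$ trick by an appeal to Beurling--Malliavin with a prescribed large decay constant. That is a legitimate but heavier route.
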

The proof will occupy the full section and will be broken into several elementary steps.

\bigskip
\subsection{Preliminary Lemmas}
We start this section by the following fact from harmonic analysis.
\begin{lem}
\label{Fourier1}
For all $\alpha>0$, there exists $C_1,C_2>0$ and a positive test function $\varphi_0 \in C_0^\infty(\R)$ with 
$\mathrm{Supp}(\varphi)=[-1,+1]$ such that for all $\vert \xi\vert \geq 2$, we have
$$\vert \widehat{\varphi_0}(\xi)\vert \leq C_1 e^{\vert \Im(\xi)\vert} \exp\left (-C_2\frac{\vert \Re(\xi)\vert}{(\log\vert \Re(\xi)\vert)^{1+\alpha}}   \right),$$
where $\widehat{\varphi_0}(\xi)$ is the Fourier transform, defined as usual by
$$ \widehat{\varphi_0}(\xi)=\int_{-\infty}^{+\infty} \varphi_0(x)e^{-ix\xi}dx. $$
\end{lem}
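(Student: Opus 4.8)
The lemma asks for a positive bump function $\varphi_0$ supported on $[-1,1]$ whose Fourier transform decays on the real axis almost like $e^{-|\xi|}$ — specifically at rate $\exp(-C_2|\Re\xi|/(\log|\Re\xi|)^{1+\alpha})$ — while growing at most like $e^{|\Im\xi|}$ in the imaginary direction. This is a classical Beurling–Ingham / Paley–Wiener type construction: one builds a compactly supported $C^\infty$ function as an infinite convolution of scaled indicator functions (or of triangle functions) with a carefully chosen sequence of scales, so that the product formula for the Fourier transform of a convolution yields the stated near-exponential decay.

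**Plan.** The plan is to construct $\varphi_0$ as a normalized infinite convolution. Pick a sequence of positive reals $(a_n)_{n\ge 1}$ with $\sum_n a_n = 1$ (so the convolution is supported in $[-1,1]$ after symmetrization), and set
$$\varphi_0 := c \cdot \bigast_{n\ge 1} \frac{1}{2a_n}\mathbf{1}_{[-a_n,a_n]},$$
where $c>0$ normalizes and $\bigast$ denotes the infinite convolution (which converges to a $C^\infty$ function provided $a_n\to 0$ not too fast, by Ingham's criterion). Its Fourier transform is the product
$$\widehat{\varphi_0}(\xi) = c\prod_{n\ge 1}\frac{\sin(a_n\xi)}{a_n\xi}.$$
For $\xi = t$ real, each factor $\left|\frac{\sin(a_n t)}{a_n t}\right|\le 1$, and whenever $a_n t$ is of order $1$ the factor is bounded away from $1$ by a fixed constant; choosing $a_n \asymp \frac{1}{n(\log n)^{1+\alpha}}$ (so that $\sum a_n$ converges) makes roughly $N(t) \asymp |t|/(\log|t|)^{1+\alpha}$ of the indices $n$ contribute a uniform multiplicative saving, giving the bound $|\widehat{\varphi_0}(t)| \le C_1\exp(-C_2|t|/(\log|t|)^{1+\alpha})$. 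For complex $\xi = t+iu$, one uses $|\sin(a_n\xi)|\le e^{a_n|u|}$ crudely in every factor, so the product of these contributes $\le e^{|u|\sum a_n} = e^{|u|}$, while the same real-part analysis (applied to $\Re\xi$) still yields the near-exponential decay factor; combining gives exactly the claimed inequality. Positivity of $\varphi_0$ follows since each $\mathbf{1}_{[-a_n,a_n]}$ is nonnegative and convolution preserves nonnegativity, and $\mathrm{Supp}(\varphi_0) = [-1,1]$ provided $\sum a_n = 1$ with all $a_n > 0$ (the support of a convolution is the closure of the sum of the supports).

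**Main obstacle.** The delicate point is the quantitative lower bound on how many factors $\frac{\sin(a_n t)}{a_n t}$ are simultaneously bounded away from $1$ for a given large real $t$, and that their product decays at the stated rate rather than merely going to zero. One must show that for $t$ large, the set of indices $n$ with $a_n t \in [\tfrac12, \tfrac32]$ (say) — where $|\sin(a_n t)/(a_n t)| \le \kappa < 1$ for an absolute $\kappa$ — has cardinality $\gtrsim |t|/(\log |t|)^{1+\alpha}$, which requires that the sequence $(a_n)$ is regular enough (e.g. $a_n = c_0/(n(\log(n+2))^{1+\alpha})$) that the counting function $\#\{n : a_n \ge 1/|t|\}$ is comparable to $|t|/(\log|t|)^{1+\alpha}$. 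This is an elementary but slightly fiddly computation with the defining series of $a_n$; the $C^\infty$ regularity (convergence of the infinite convolution to a smooth function) is then automatic from the faster-than-$1/n^{1+\epsilon}$ decay of the $a_n$ together with standard Ingham-type estimates. I would carry out the construction, verify support and positivity, prove the counting estimate, and finally assemble the two bounds (real decay and imaginary growth) into the stated inequality, restricting to $|\xi|\ge 2$ so that $\log|\Re\xi|$ is harmless when $\Re\xi$ is small (in which regime the $e^{|\Im\xi|}$ factor already dominates).
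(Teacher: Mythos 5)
Your proposal is correct and builds exactly the same test function as the paper — the (normalized) infinite convolution $\bigstar_n \frac{1}{2a_n}\mathbf{1}_{[-a_n,a_n]}$ with $a_n\asymp 1/(n(\log n)^{1+\alpha})$ and $\sum a_n=1$, whose Fourier transform is the infinite product of sinc factors $\prod_n \sin(a_n\xi)/(a_n\xi)$ — but the way you extract the decay estimate is genuinely different from the paper's. The paper never touches the product directly: it observes that $f$ has Fourier coefficients $\varphi(k)=\prod_j\sin(\mu_j k)/(\mu_j k)$ of rapid decay (hence $f\in C^\infty$), then performs $N$-fold integration by parts to get $|\widehat f(\xi)|\le e^{|\Im\xi|}\,|\Re\xi|^{-N}\|f^{(N)}\|_{L^2}$, controls $\|f^{(N)}\|_{L^2}$ by Plancherel as $\prod_{j\le N+1}\mu_j^{-2}$, and finally optimizes over $N\approx |\Re\xi|/(\log|\Re\xi|)^{1+2\epsilon}$ via Stirling. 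You instead bound the infinite product factor by factor: for each $n$ you have $\bigl|\sin(a_n\xi)/(a_n\xi)\bigr|\le e^{a_n|\Im\xi|}\min\{1,\,1/(a_n|\Re\xi|)\}$, so multiplying the $e^{a_n|\Im\xi|}$'s gives $e^{|\Im\xi|}$ outright (since $\sum a_n=1$), while the indices with $a_n|\Re\xi|\ge 2$ each contribute a factor $\le 1/2$, and a counting estimate shows there are $\asymp|\Re\xi|/(\log|\Re\xi|)^{1+\alpha}$ of them. This is an honest alternative: it avoids both the optimization over $N$ and the Stirling asymptotics, at the cost of the (elementary but, as you note, fiddly) counting lemma for $\#\{n:a_n\ge c/|\Re\xi|\}$. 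One small point to tighten up in your write-up: the phrase "whenever $a_n t$ is of order 1 the factor is bounded away from $1$" is not quite right as stated (the factor can be close to $1$ for $a_n t$ near $0$); the clean statement is that $|\sin x/x|\le 1/|x|\le 1/2$ for all $|x|\ge 2$, which is what your counting argument actually uses. Positivity, the support being exactly $[-1,1]$, and $C^\infty$ regularity all go through as you indicate.
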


\begin{proof}
It is known from the Beurling-Malliavin multiplier Theorem, or the Denjoy-Carleman Theorem, that for
compactly supported test functions $\psi$, one cannot beat the Fourier decay rate ($\xi \in \R$, large)
$$ \vert \widehat{\psi}(\xi)\vert =O\left (\exp\left(-C\frac{\vert \xi\vert}{\log\vert \xi\vert}\right)  \right),$$
because this rate of Fourier decay implies quasi-analyticity (hence no compactly supported test functions). We refer the reader to
\cite[Chapter~5]{Katz} for more details. The above statement is definitely a {\it folklore} result. However since we need a precise control for complex valued
$\xi$ and couldn't find the exact reference for it, we provide an outline of the proof which follows closely the construction that one can find in \cite[Chapter~5, Lemma~2.7]{Katz}.

Let $(\mu_j)_{j\geq 1}$ be a sequence of positive numbers such that $\sum_{j=1}^\infty \mu_j=1$. For all $k\in \Z$, set
$$\varphi_N(k)=\prod_{j=1}^N \frac{\sin(\mu_j k)}{\mu_j k},\ \  \varphi(k)=\prod_{j=1}^\infty \frac{\sin(\mu_j k)}{\mu_j k}.$$
Consider the Fourier series given by
$$f(x):=\sum_{k \in \Z} \varphi(k) e^{ikx},\ \ f_N(x):=\sum_{k \in \Z} \varphi_N(k) e^{ikx},$$
then one can observe that by rapid decay of $\varphi(k)$, $f(x)$ defines a $C^\infty$ function on $[-2\pi,2\pi]$.
On the other hand, one can check that $f_N(x)$ converges uniformly to $f$ as $N$ goes to $\infty$ and that
$$f_N(x)=(g_1 \star g_2\star \ldots \star g_N)(x),$$
where $\star$ is the convolution product and each $g_j$ is given by
$$g_j(x):=\left \{ \frac{2\pi}{\mu_j}\ \mathrm{if}\ \vert x\vert \leq \mu_j \atop 0\ \mathrm{elsewhere}. \right.$$
From this observation one deduces that $f$ is positive and supported on $[-1,+1]$ since we assume
$$\sum_{j=1}^\infty \mu_j=1.$$
We now extend $f$ outside $[-1,+1]$ by zero and write by integration by parts and Schwarz inequality,
$$\vert \widehat{ f}(\xi)\vert \leq  \frac{e^{\vert \Im(\xi)\vert}}{\vert \Re(\xi)\vert^N}\Vert f^{(N)}\Vert_{L^2(-1,+1)}.$$
By Plancherel formula, we get
$$\Vert f^{(N)}\Vert_{L^2(-1,+1)}^2=\sum_{k \in \Z} k^{2N}(\varphi(k))^2\leq C \prod_{j=1}^{N+1}\mu_j^{-2},$$
where $C>0$ is some universal constant. Fixing $\epsilon>0$, we now choose
$$\mu_j=\frac{\widetilde{C}}{j(\log(1+j))^{1+\epsilon}},$$
where $\widetilde{C}$ is adjusted so that $\sum_{j=1}^\infty \mu_j=1$, and we get
$$\vert \widehat{ f}(\xi)\vert \leq  \frac{e^{\vert \Im(\xi)\vert}}{\vert \Re(\xi)\vert^N} (C_1)^N N! e^{N(1+\epsilon)\log\log(N)}.$$
Using Stirling's formula and choosing $N$ of size
$$N=\left [ \frac{\vert \Re(\xi)\vert}{(\log(\vert\Re(\xi)\vert)^{1+2\epsilon}} \right ] $$
yields (after some calculations) to
$$\vert \widehat{ f}(\xi)\vert \leq  O\left (e^{\vert \Im(\xi)\vert} e^{-C_2 \frac{\vert \Re(\xi)\vert}{(\log(\vert \Re(\xi)\vert)^{1+2\epsilon}}}  
\right),$$
and the proof is finished. 
\end{proof}

One can obviously push the above construction further below the threshold $\frac{\vert \xi \vert}{\log\vert \xi\vert}$
by obtaining decay rates of the type
$$\exp \left (-\frac{\vert \xi\vert}{\log\vert \xi \vert \log(\log\vert \xi \vert) \ldots (\log_{(n)}\vert \xi \vert)^{1+\alpha}}\right),$$
where $\log_{(n)}(x)=\log\log\ldots\log(x)$, iterated $n$ times. However this would only yield a very mild improvement to the main statement,
so we will content ourselves with the above lemma.

We continue with another result which will allow us to estimate the size of the log-derivative of $L_\Gamma(s,\rr)$ in a narrow rectangular
zero-free region. More precisely, we have the following:
\begin{propo}
\label{Derivative1}
Fix $\sigma<\delta$. For all $\epsilon>0$, there exist $C(\epsilon), R(\epsilon) >0$ such that for all $R\geq R(\epsilon)$,
if $L_\Gamma(s,\rr)$ ($\rr$ is non-trivial) has no zeros in the rectangle
$$\{ \sigma \leq \Re(s) \leq 1\ \mathrm{and}\ \vert \Im(s)\vert \leq R\},$$
then we have for all $s$ in the smaller rectangle
$$\{ \sigma+\epsilon \leq \Re(s) \leq 1\ \mathrm{and}\ \vert \Im(s)\vert \leq C(\epsilon)R\},$$
$$\left \vert \frac{L'_\Gamma(s,\rr)}{L_\Gamma(s,\rr)}   \right \vert \leq B(\epsilon) d_\rr \log(d_\rr+1) R^6.$$
\end{propo}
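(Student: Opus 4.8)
The plan is to deduce the logarithmic–derivative bound from the zero-free hypothesis by the classical route: apply the Borel--Carath\'eodory inequality to a single-valued branch of $\log L_\Gamma(\cdot,\rr)$ on a disc contained in the zero-free region. Three ingredients are needed. First, the growth estimate of Theorem \ref{main1}(2), $\log|L_\Gamma(w,\rr)|\le\log C_1+C_2\,d_\rr\log(1+d_\rr)(1+|w|^2)$, uniform in $\rr$ and in the cover, which will bound $\Re g$ on circles. Second, the remark that, since $\rho_{sp}(\lt_{\rr,s})\le e^{P(\Re s)}<1$ for $\Re s>\delta$ by \eqref{radius1}, the operator $I-\lt_{\rr,s}$ is invertible there, so $L_\Gamma(s,\rr)=\det(I-\lt_{\rr,s})\ne 0$ for \emph{every} $\rr$ on $\{\Re s>\delta\}$; together with the hypothesis this shows $L_\Gamma(\cdot,\rr)$ is holomorphic and zero-free on the half-strip $\Sigma:=\{\Re s\ge\sigma,\ |\Im s|\le R\}$. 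Third, a lower bound for $|L_\Gamma(\cdot,\rr)|$ on a fixed vertical line far to the right, to anchor the estimate: fixing a constant $A_0>\delta$ (large in terms of $\epsilon$), the absolutely convergent series from \S 2,
$$\log L_\Gamma(s,\rr)=-\sum_{\mathcal{C},k,n}\frac{\chi_\rr(\mathcal{C}^k)}{k}\,e^{-(s+n)kl(\mathcal{C})}\qquad(\Re s>\delta),$$
yields, via $|\chi_\rr(\mathcal{C}^k)|\le\chi_\rr(e)=d_\rr$, the fact that the geodesic lengths of $X$ are bounded below, and the convergence of $\sum_{\mathcal{C}}e^{-A_0l(\mathcal{C})}$, a constant $\kappa>0$ depending only on $X$ and $A_0$ such that $|L_\Gamma(s,\rr)|\ge e^{-\kappa d_\rr}$ on $\{\Re s=A_0\}$, for all $\rr$.

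For the main step, take $C(\epsilon):=\tfrac12$ and let $s$ be in the rectangle $\{\sigma+\epsilon\le\Re s\le 1,\ |\Im s|\le R/2\}$. Put $\tilde{s}_0:=A_0+i\,\Im(s)$, $r_2:=A_0-\sigma-\tfrac{\epsilon}{2}$, $r_1:=r_2-\tfrac{\epsilon}{2}$; then $|s-\tilde{s}_0|=A_0-\Re s\le r_1$, and for $R\ge R(\epsilon)$ the closed disc $\overline{D(\tilde{s}_0,r_2)}$ lies strictly inside $\Sigma$ (its real parts lie in $[\sigma+\tfrac{\epsilon}{2},2A_0-\sigma]$, its imaginary parts in $[-R/2-r_2,R/2+r_2]\subset(-R,R)$), so a single-valued holomorphic branch $g:=\log L_\Gamma(\cdot,\rr)$ is defined on it. The Borel--Carath\'eodory inequality for derivatives gives
$$\Big|\frac{L'_\Gamma(s,\rr)}{L_\Gamma(s,\rr)}\Big|=|g'(s)|\ \ll\ \frac{r_2}{(r_2-r_1)^2}\Big(\sup_{|w-\tilde{s}_0|=r_2}\log|L_\Gamma(w,\rr)|-\log|L_\Gamma(\tilde{s}_0,\rr)|\Big).$$
Here $\sup_{|w-\tilde{s}_0|=r_2}\log|L_\Gamma(w,\rr)|\le\log C_1+C_2\,d_\rr\log(1+d_\rr)\big(1+(A_0+R/2+r_2)^2\big)\ll_\epsilon d_\rr\log(d_\rr+1)R^2$ by the first ingredient, $-\log|L_\Gamma(\tilde{s}_0,\rr)|\le\kappa d_\rr$ by the anchor, and $\tfrac{r_2}{(r_2-r_1)^2}=O_\epsilon(1)$ since $r_2-r_1=\tfrac{\epsilon}{2}$ and $r_2=O(1)$. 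This gives $|L'_\Gamma(s,\rr)/L_\Gamma(s,\rr)|\le B(\epsilon)\,d_\rr\log(d_\rr+1)R^2$, which is even stronger than the stated bound with $R^6$; the exact power is immaterial for the application.

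The only genuinely delicate point is uniformity in $\rr$ and in the cover. The majorant is uniform by construction, but it has to be balanced against a lower bound at the centre of the disc that does not decay faster than $e^{-O(d_\rr)}$, and supplying such a bound is precisely the role of the convergent series representation — which also forces the anchor line to be chosen to the right of $\delta$, where the Euler-type product converges. Once this is in hand, arranging the disc to remain inside $\Sigma$ while still reaching a height comparable to $R$ is only a matter of taking $C(\epsilon)$ small and $R$ large; since $L_\Gamma(\cdot,\rr)$ has order at most $2$ a disc of bounded radius already suffices, so no chaining of discs is needed. (Alternatively, one could use the Hadamard factorization of $L_\Gamma(\cdot,\rr)$ together with a Jensen-type count of its zeros in a disc of radius $\asymp R$, which leads to a bound of the same shape.)
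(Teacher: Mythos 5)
Your proof is correct and gives a sharper bound than the paper's; the two proofs share the same skeleton (Borel--Carath\'eodory applied to a branch of $\log L_\Gamma(\cdot,\rr)$, majorised by the a~priori bound of Theorem~\ref{main1}(2) and anchored by a lower bound on $\log|L_\Gamma|$ at a point with $\Re(s)>\delta$), but the geometry of the disc is genuinely different. The paper uses a \emph{single} large disc $D(M,r)$ with $M\asymp r\asymp R^2/(1-\sigma)$, centred on the real axis and internally tangent to the line $\{\Re(s)=\sigma\}$ at $s=\sigma$, chosen so that its intersection with the half-plane $\{\Re(s)\le 1\}$ sits inside the zero-free rectangle; because $|w|\lesssim R^2$ on that disc, the a~priori bound contributes $d_\rr\log(d_\rr+1)R^4$, and the Carath\'eodory prefactor $r/\varepsilon\asymp R^2$ then produces the stated $R^6$. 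You instead translate a disc of \emph{bounded} radius vertically with $s$, keeping its centre $\tilde s_0=A_0+i\Im(s)$ on a fixed line $\Re=A_0>\delta$; since the disc has $O(1)$ radius and $|w|=O(R)$ on it, the a~priori bound only gives $d_\rr\log(d_\rr+1)R^2$ and the Carath\'eodory factor is $O_\epsilon(1)$, yielding the stronger exponent $R^2$. Each route buys something: the paper's fixed disc avoids any discussion of how the disc depends on $s$, whereas your family of translated discs needs the (easy) observation that every translate stays inside the zero-free region once $|\Im(s)|\le R/2$ and $R$ is large enough; in return you pay fewer powers of $R$. As you note, the improvement from $R^6$ to $R^2$ is immaterial for Proposition~\ref{ExplicitF} (any polynomial in $R=(\log T)^{1+\overline\alpha}$ is eventually absorbed). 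One small point worth spelling out to make the anchor line fully uniform: take $A_0\ge\max(1,\delta)+1$ so that $A_0\ge\Re(s)$ for all $s$ in the target rectangle, and assume WLOG $\epsilon<2(\delta-\sigma)$ so that $r_2=A_0-\sigma-\epsilon/2>0$; then $|s-\tilde s_0|=A_0-\Re(s)\le r_1$ as you claim, and the rest goes through verbatim.
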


\begin{proof}
We will use Caratheodory's Lemma and take advantage of the {\it a priori} bound from Theorem \ref{main1}. More precisely,
our goal is to rely on this estimate (see Titchmarsh \cite[5.51]{Tits}).
\begin{lem}
\label{Bigtits}
 Assume that $f$ is a holomorphic function on a neighborhood of the closed disc $\overline {D}(0,r)$, then
 for all $r'<r$, we have
 $$ \max_{\vert z\vert\leq r'} \vert f'(z) \vert \leq \frac{8r}{(r-r')^2}\left ( 
 \max_{\vert z  \vert \leq r} \vert \Re(f(z))\vert+\vert f(0)\vert \right).$$
 \end{lem}
 First we recall that for all $\Re(s)>\delta$, $L_\Gamma(s,\rr)$ does not vanish and has a representation as
 $$L_\Gamma(s,\rr)=\mathrm{exp}\left(-\sum_{\mathcal{C},k} \frac{\chi_\rr(\mathcal{C}^k)}{k}
\frac{e^{-skl(\mathcal{C})}}{1-e^{kl(\mathcal{C})}} \right),$$
so that we get for all $\Re(s)\geq A>\delta$,
\begin{equation}
\label{Center1}
\left \vert \log\vert L_\Gamma(s,\rr)\vert \right \vert \leq C_A d_\rr,\ \ \left \vert \frac{L'_\Gamma(s,\rr)}{L_\Gamma(s,\rr)} \right \vert \leq  C'_A d_\rr
\end{equation}
where $C_A, C'_A>0$ are uniform constants on all half-planes 
$$\{\Re(s)\geq A>\delta \}.$$
We have simply used the prime orbit theorem and the trivial bound on characters of unitary representations:
 $$\vert\chi_\rr(g)\vert \leq d_\rr,\ \mathrm{for\ all}\  g\in \G.$$
 Let us now assume that $L_\Gamma(s,\rr)$ does not vanish on the rectangle
 $$\{ \sigma \leq \Re(s) \leq 1\ \mathrm{and}\ \vert \Im(s)\vert \leq R\}.$$ 
 Consider the disc $D(M,r)$ centered at $M$ and with radius $r$ where $M(\sigma,R)$ and $r(\sigma,R)$ are given by
 $$M(\sigma,R)=\frac{R^2}{2(1-\sigma)}+\frac{\sigma+1}{2};\ \ r(\sigma,R)=M(\sigma,R)-\sigma,$$
 see the figure below.
 
 \begin{center}
\includegraphics[scale=0.5]{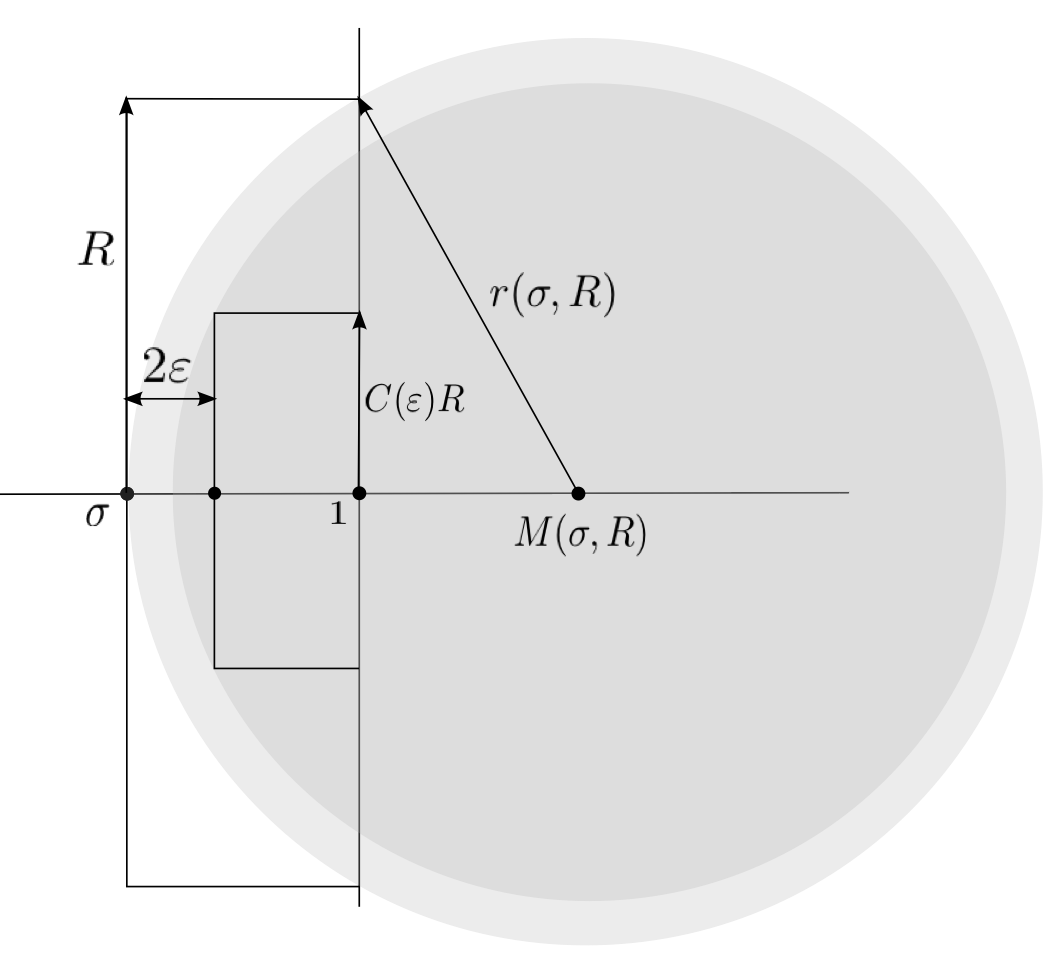}
\end{center}
Since by assumption $s\mapsto L_\Gamma(s,\rr)$ does not vanish on the closed disc $\overline{D}(M,r)$, we can choose a determination
of the complex logarithm of $L_\Gamma(s,\rr)$ on this disc to which we can apply Lemma \ref{Bigtits} on the smaller disc 
$D(M,r-\varepsilon)$, which yields (using the a priori bound from Theorem \ref{main1} and estimate (\ref{Center1}))
$$\left \vert \frac{L'_\Gamma(s,\rr)}{L_\Gamma(s,\rr)}   \right \vert
 \leq C\frac{r}{\varepsilon}\left (d_\rr \log(d_\rr+1) r^2 +A_1d_\rr \right )$$
 $$=O\left ( R^6 d_\rr \log(d_\rr+1) \right),$$
 where the implied constant is uniform with respect to $R$ and $d_\rr$.
 Looking at the picture, the smaller disc $D(M,r-\varepsilon)$ contains a rectangle
 $$\{ \sigma+2\varepsilon \leq \Re(s) \leq 1\ \mathrm{and}\ \vert \Im(s)\vert \leq L(\varepsilon)\},$$
 where $L(\varepsilon)$ satisfies the identity (Pythagoras Theorem!)
 $$L^2(\varepsilon)=\varepsilon(2M-2\sigma-3\varepsilon),$$
which shows that 
$$ L(\epsilon)\geq C(\varepsilon)R,$$
with $C(\varepsilon)>0$, as long as $R\geq R_0(\epsilon)$, for some $R_0>0$. The proof is done. 
\end{proof}

\subsection{Proof of Proposition \ref{ExplicitF}}
We are now ready to prove the main result of this section, by combining the above facts with a standard contour deformation argument. We fix a small $\varepsilon>0$ and $0<\alpha<\overline{\alpha}$. We use Lemma
\ref{Fourier1} to pick a test function $\varphi_0$ with Fourier decay as described, with same exponent $\alpha$. We set for all $T>0$, and $s\in \C$,
$$\psi_T(s)=\int_{-\infty}^{+\infty} e^{sx}\varphi_0\left(\frac{x}{T}\right)dx$$
$$=T\widehat{\varphi_0}(isT).$$
By the estimate from Lemma \ref{Fourier1}, 
we have 
\begin{equation}
\label{testdecay}
\vert \psi_T(s)\vert\leq C_1 Te^{T\vert \Re(s)\vert}\exp\left(-C_2 \frac{\vert \Im(s)\vert T}{(\log(T\vert \Im(s)\vert)^{1+\alpha}}\right).
\end{equation} 
We fix now $A>\delta$ and consider the contour integral
$$I(\rr,T)=\frac{1}{2i\pi} \int_{A-i\infty}^{A+i\infty} \frac{L'_\Gamma(s,\rr)}{L_\Gamma(s,\rr)}\psi_T(s)ds.$$
Convergence is guaranteed by estimate (\ref{Center1}) and rapid decay of $\vert \psi_T(s)\vert$ on vertical lines.
Because we choose $A>\delta$, we have absolute convergence of the series
$$\frac{L'_\Gamma(s,\rr)}{L_\Gamma(s,\rr)}=\sum_{\mathcal{C},k} \chi_\rr(\mathcal{C}^k)
\frac{l(\mathcal{C})e^{-skl(\mathcal{C})}}{1-e^{kl(\mathcal{C})}}$$
on the vertical line $\{\Re(s)=A\}$, 
and we can use Fubini to write
$$I(\rr,T)=\sum_{\mathcal{C},k} \chi_\rr(\mathcal{C}^k)
\frac{l(\mathcal{C})}{1-e^{kl(\mathcal{C})}}e^{-Akl(\mathcal{C})}\frac{1}{2\pi}\int_{-\infty}^{+\infty} e^{-itkl(\mathcal{C})}
\widehat{\psi_T}(iA-t)dt,$$
and Fourier inversion formula gives
$$I(\rr,T)=\sum_{\mathcal{C},k} \chi_\rr(\mathcal{C}^k)
\frac{l(\mathcal{C})}{1-e^{kl(\mathcal{C})}}\varphi_0\left( \frac{kl(\mathcal{C})}{T}\right).$$
Assuming that $L_\Gamma(s,\rr)$ has no zeros in 
$$\{ \sigma \leq \Re(s) \leq 1\ \mathrm{and}\ \vert \Im(s)\vert \leq R\},$$
where $R$ will be adjusted later on, our aim is to use Proposition \ref{Derivative1} to deform the contour integral $I(\rr,T)$ as depicted in the figure
below. 
 \begin{center}
\includegraphics[scale=0.6]{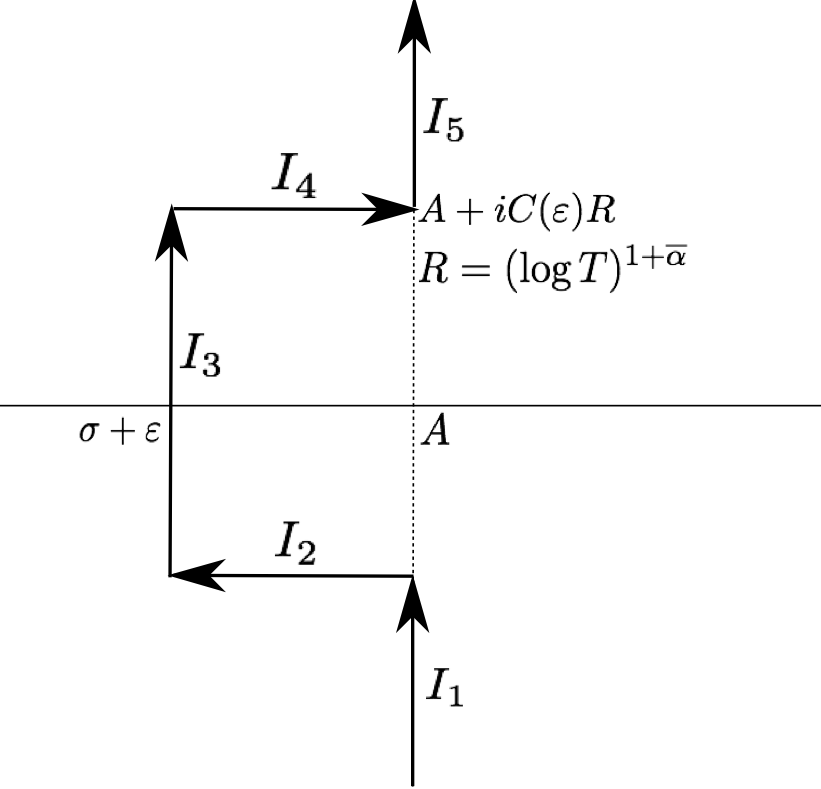}
\end{center} 
 Writing $I(\rr,T)=\sum_{j=1}^5 I_j$ (see the above figure), we need to estimate carefully each contribution. In the course of the proof, we will use the following basic fact.
 \begin{lem}
 \label{expo1}
 Let $\phi:[M_0,+\infty)\rightarrow \R^+$ be a $C^2$ map with $\phi'(x)>0$ on $[M_0,+\infty)$ and satisfying
 $$(*)\ \ \ \ \sup_{x\geq M_0} \left \vert \frac{\phi''(x)}{(\phi'(x))^3}\right \vert \leq C, $$
 then we have for all $M\geq M_0$,
 $$\int_M^{+\infty} e^{-\phi(t)}dt\leq \frac{e^{-\phi(M)}}{\phi'(M)}+Ce^{-\phi(M)}.$$
 \end{lem}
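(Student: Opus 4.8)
The plan is to prove Lemma \ref{expo1} by a single integration by parts, using $1/\phi'$ as an integrating factor, and then to control the resulting error term using hypothesis $(*)$. The starting point is the elementary identity $e^{-\phi(t)}=-\frac{1}{\phi'(t)}\frac{d}{dt}\big(e^{-\phi(t)}\big)$, valid since $\phi'>0$. Integrating by parts on $[M,+\infty)$ then gives
$$\int_M^{+\infty}e^{-\phi(t)}\,dt=\left[-\frac{e^{-\phi(t)}}{\phi'(t)}\right]_M^{+\infty}-\int_M^{+\infty}e^{-\phi(t)}\frac{\phi''(t)}{(\phi'(t))^2}\,dt.$$
The boundary term at $t=M$ contributes exactly $\dfrac{e^{-\phi(M)}}{\phi'(M)}$, which is the main term of the claimed bound, while the integral on the right is the error term to be estimated.

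For the error term I would simply use $(*)$ in the form $\left|\frac{\phi''(t)}{(\phi'(t))^2}\right|=\left|\frac{\phi''(t)}{(\phi'(t))^3}\right|\phi'(t)\leq C\,\phi'(t)$, which reduces the problem to an exact integral:
$$\left|\int_M^{+\infty}e^{-\phi(t)}\frac{\phi''(t)}{(\phi'(t))^2}\,dt\right|\leq C\int_M^{+\infty}\phi'(t)e^{-\phi(t)}\,dt=C\left[-e^{-\phi(t)}\right]_M^{+\infty}\leq Ce^{-\phi(M)}.$$
Combining the two displays yields $\int_M^{+\infty}e^{-\phi(t)}\,dt\leq \frac{e^{-\phi(M)}}{\phi'(M)}+Ce^{-\phi(M)}$, which is precisely the assertion. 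So the whole argument is two integrations by parts, the second of which is exact; there is no real analytic difficulty, only the bookkeeping of $(*)$.

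The only point requiring a sentence of justification — and the closest thing to an obstacle — is the convergence of $\int_M^{+\infty}e^{-\phi(t)}\,dt$ together with the vanishing of the boundary term $e^{-\phi(t)}/\phi'(t)$ as $t\to+\infty$; a priori $\phi$ is only assumed increasing and could have a finite limit. This too follows from $(*)$: since $\frac{d}{dt}\big((\phi'(t))^{-2}\big)=-2\phi''(t)/(\phi'(t))^3$ has absolute value at most $2C$, we get $(\phi'(t))^{-2}\leq (\phi'(M))^{-2}+2C(t-M)$, hence $\phi'(t)\gtrsim (1+t)^{-1/2}$ for large $t$, and therefore $\phi(t)\geq \phi(M)+\int_M^t\phi'(s)\,ds\gtrsim \sqrt{t}\to+\infty$. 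Consequently $e^{-\phi(t)}$ decays faster than any power of $t$ while $1/\phi'(t)$ grows at most like $\sqrt{t}$, so the product tends to $0$ and the improper integral converges, legitimizing the integration by parts. I expect no genuine obstacle here; the statement is a routine Laplace-type tail estimate.
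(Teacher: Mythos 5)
Your proof is correct and is essentially the same computation as the paper's, just in a different order: the paper first changes variables $u=\phi(t)$ and then integrates by parts in $u$, where the derivative of $1/\phi'(\phi^{-1}(u))$ is exactly $-\phi''/(\phi')^3$ and $(*)$ applies directly, whereas you integrate by parts in $t$ first and then reintroduce the factor $\phi'(t)$ in the error term to invoke $(*)$. Your justification of the vanishing of the boundary term at infinity via $(\phi'(t))^{-2}\le(\phi'(M))^{-2}+2C(t-M)$ is the same observation the paper uses (more tersely stated there), so there is no substantive difference.
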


\begin{proof}
First observe that condition $(*)$ implies that 
 $$x\mapsto \frac{1}{(\phi'(x))^2}$$
 has a uniformly bounded derivative, which is enough to guarantee that 
 $$\lim_{x\rightarrow +\infty} \frac{e^{-\phi(x)}}{\phi'(x)}=0. $$
 In particular $\lim_{x\to +\infty} \phi(x)=+\infty$ and for all $M\geq M_0$, $\phi:[M,+\infty)\rightarrow [\phi(M),+\infty)$ is a $C^2$-diffeomorphism.
 A change of variable gives
 $$\int_M^{+\infty} e^{-\phi(t)}dt=\int_{\phi(M)}^{+\infty} e^{-u}\frac{du}{\phi'(\phi^{-1}(u))},$$
 and integrating by parts yields the result. 
\end{proof}

\bigskip

\begin{itemize}
\item First we start with $I_1$ and $I_5$. Using estimate (\ref{Center1}) combined with (\ref{testdecay}), we have
$$\vert I_5 \vert \leq C d_\rr T e^{TA} \int_{C(\varepsilon)R} ^{+\infty} e^{-C_2 \frac{tT}{(\log(tT))^{1+\alpha}}}dt,$$
which by a change of variable leaves us with
$$\vert I_5 \vert \leq C d_\rr e^{TA} \int_{C(\varepsilon)RT}^{+\infty} e^{-C_2 \frac{u}{(\log(u))^{1+\alpha}}}du.$$
This where we use Lemma \ref{expo1} with 
$$\phi(x)=C_2 \frac{x}{(\log(x))^{1+\alpha}}.$$
Computing the first two derivatives, we can check that condition $(*)$ is fulfilled and therefore
$$\int_{M}^{+\infty} e^{-C_2 \frac{u}{(\log(u))^{1+\alpha}}}\leq C (\log(M))^{1+\alpha} e^{-C_2 \frac{M}{(\log(M))^{1+\alpha}}},$$
for some universal constant $C>0$. We have finally obtained
$$ \vert I_5 \vert \leq C d_\rr e^{TA}(\log(RT))^{1+\alpha} e^{-C_2 \frac{RT}{(\log(RT))^{1+\alpha}}}.$$
Choosing $R=(\log(T))^{1+\overline{\alpha}}$, with $\overline{\alpha}>\alpha$ gives 

$$\vert I_5 \vert =O\left(d_\rr e^{TA}(\log(T))^{1+\alpha} e^{-C_2 T(\log(T))^{\overline{\alpha}-\alpha}}\right)$$
$$=O\left ( d_\rho e^{-BT}\right),$$
where $B>0$ can be taken as large as we want. The exact same estimate is valid for $I_1$.
\item The case of $I_4$ and $I_2$. Here we use the bound from Proposition \ref{Derivative1} and again (\ref{testdecay}) to get
$$ \vert I_4\vert+\vert I_2\vert=O\left ( d_\rr \log(d_\rr+1) e^{-BT} \right),$$
where $B$ can be taken again as large as we want.
\item We are left with $I_3$ where 
$$I_3=\frac{1}{2\pi}\int_{-C(\epsilon)R}^{+C(\epsilon)R} \frac{L'_\Gamma(\sigma+\varepsilon+it,\rr)}{L_\Gamma(\sigma+\varepsilon+it,\rr)}
\psi_T(\sigma+\varepsilon+it)dt.$$
Using Proposition \ref{Derivative1} and (\ref{testdecay}) we get
$$\vert I_3 \vert =O\left( d_\rr \log(d_\rr+1) (\log(T))^{7(1+\overline{\alpha})} e^{(\sigma+\epsilon)T}\right).$$
\end{itemize}
Clearly the leading term in the contour integral is provided by $I_3$, and the proof of Proposition \ref{ExplicitF} is now complete. 

\bigskip
We conclude this section by a final observation. If $\rr=id$ is the {\it trivial representation}, then $L_\Gamma(s,id)=Z_\Gamma(s)$ has a zero
at $s=\delta$, thus the best estimate for the contour integral $I(id,T)$ is given by (\ref{Center1}) and (\ref{testdecay}) which yields (by a change of variable)
$$\vert I(id,T) \vert \leq C_Ad_\rr \int_{-\infty}^{+\infty} \vert \psi_T(A+it)\vert dt \leq \widetilde{C}_A d_\rr Te^{TA}
\int_{-\infty}^{+\infty}\exp\left(-C_2 \frac{\vert  t\vert T}{\left(\log(T\vert t\vert)\right)^{1+\alpha}}\right)dt$$
$$=O\left(d_\rr e^{TA}\right).$$
Since $d_\rr=1$ and $A$ can be taken as close to $\delta$ as we want, the contribution from the trivial representation is of size
\begin{equation}
\label{trivial}
I(id,T)=O\left(e^{(\delta+\epsilon)T} \right).
\end{equation}

\section{Congruence subgroups and existence of "low lying" zeros for $L_\Gamma(s,\rr)$}
\subsection{Conjugacy classes in $\G$.}
In this section, we will use more precise knowledge on the group structure of 
$$\G=SL_2(\Fp).$$
Our basic reference is the book \cite{Suzuki}, see Chapter 3, $\S 6$ for much more general statements over finite fields.
We start by describing the conjugacy classes in $\G$. Since we are only interested in the large $p$ behaviour, we will assume that
$p$ is an odd prime strictly bigger than $3$. Conjugacy classes of elements $g\in \G$ are essentially determined by the roots of the characteristic polynomial
$$\det(xI_2-g)=x^2-\mathrm{tr}(g)x+1,$$
which are denoted by $\lambda, \lambda^{-1}$, where $\lambda \in \Fp^{\times}$. There are three different possibilities.
\begin{itemize}
\item $\lambda\neq \lambda^{-1} \in  \Fp^{\times}$. In that case $g$ is diagonalizable over $\Fp$ and $g$ is conjugate to the matrix
$$D(\lambda)=\left (\begin{array}{cc}
\lambda &0\\
0&\lambda^{-1}
\end{array} \right ).$$
The centralizer $\mathcal{Z}(D(\lambda))=\{ h\in \G\ :\ hD(\lambda)h^{-1}=D(\lambda)\}$ is then equal to the "maximal torus"
$$A=\left\{  \left (\begin{array}{cc}
a &0\\
0&a^{-1}
\end{array} \right )\ :\ a \in   \Fp^{\times}\right\},$$
and we have $\vert A\vert=p-1$, the conjugacy class of $g$ has $p(p+1)$ elements.
\item $\lambda\neq \lambda^{-1} \not \in  \Fp^{\times}$. In that case $\lambda$ belongs to $\mathcal{F}\simeq \mathbb{F}_{p^2}$ the unique quadratic extension of $\Fp$. The root $\lambda$ can be written as 
$$ \lambda=a+b\sqrt{\epsilon},\  \lambda^{-1}=a-b\sqrt{\epsilon},$$
where $\{1,\sqrt{\epsilon}\}$ is a fixed $\Fp$-basis of $\mathcal{F}$.
Therefore $g$ is conjugate to 
$$\left (\begin{array}{cc}
a &\epsilon b\\
b& a
\end{array} \right),$$
and $\vert \mathcal{Z}(g)\vert=p+1$, its conjugacy class has $p(p-1)$ elements.
\item $\lambda=\lambda^{-1}\in \{\pm 1\}$. In that case $g$ is non-diagonalizable unless $g\in \mathcal{Z}(\G)=\{\pm I_2\}$, and is conjugate to
$\pm u$ or $\pm u'$ where 
$$u= \left (\begin{array}{cc}
1 & 1\\
0& 1
\end{array} \right),\ u'=\left (\begin{array}{cc}
1 & \epsilon\\
0& 1
\end{array} \right).$$
The centralizer $\mathcal{Z}(g)$ has cardinality $2p$ and the four conjugacy classes have $p(p+1)$ elements.  
\end{itemize}
Using this knowledge on conjugacy classes, one can construct all irreducible representations and write a character table for $\G$,
but we won't need it. There are two facts that we highlight and will use in the sequel:

\begin{enumerate}
 \item For all $g \in \G$, $\vert \mathcal{Z}(g)\vert \geq p-1$.
 \item For all $\rr$ non-trivial we have $d_\rho\geq \frac{p-1}{2}$.
\end{enumerate}
We will also rely on the very important observation below.
\begin{propo}
\label{Conj1}
Let $\Gamma$ be a convex co-compact subgroup of $SL_2(\Z)$ as above.
Fix $0<\beta<2$, and consider the set $\mathcal{E}_T$ of conjugacy classes $\overline{\gamma}\subset \Gamma\setminus \{Id\}$ such that for all 
$\overline{\gamma}\in \mathcal{E}_T$, we have $\l(\gamma)\leq T:=\beta \log(p)$. Then for all $p$ large and all
$\overline{\gamma_1},\overline{\gamma_2}\in \mathcal{E}_T$, the following are equivalent:
\begin{enumerate}
\item $\mathrm{tr}(\gamma_1)=\mathrm{tr}(\gamma_2)$.
\item $\gamma_1$ and $\gamma_2$ are conjugate in $\G$.
\end{enumerate}
\end{propo}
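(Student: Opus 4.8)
The plan is to route everything through the trace. The two facts I will lean on are: (i) since $\Gamma$ is convex co-compact, every $\gamma\in\Gamma\setminus\{\mathrm{Id}\}$ is hyperbolic, so $|\mathrm{tr}(\gamma)|>2$ and the classical identity $|\mathrm{tr}(\gamma)|=2\cosh(l(\gamma)/2)$ holds; and (ii) the description of the conjugacy classes of $SL_2(\Fp)$ recalled just above. First I would record the size estimate: if $\overline{\gamma}\in\mathcal{E}_T$, i.e. $l(\gamma)\leq T=\beta\log p$, then
$$2<|\mathrm{tr}(\gamma)|=2\cosh(l(\gamma)/2)\leq p^{\beta/2}+p^{-\beta/2}\leq 2p^{\beta/2}.$$
Because $\beta<2$ we have $\beta/2<1$, so $2p^{\beta/2}=o(p)$ and for $p$ large every $\overline{\gamma}\in\mathcal{E}_T$ satisfies $2<|\mathrm{tr}(\gamma)|<p/2$. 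I draw two consequences. \textbf{(a)} If $\overline{\gamma_1},\overline{\gamma_2}\in\mathcal{E}_T$ have $\mathrm{tr}(\gamma_1)\equiv\mathrm{tr}(\gamma_2)\pmod p$, then $\mathrm{tr}(\gamma_1)=\mathrm{tr}(\gamma_2)$ in $\Z$, since two integers of absolute value $<p/2$ congruent modulo $p$ coincide. \textbf{(b)} The reduction $\mathrm{tr}(\gamma)\bmod p$ is never $\pm 2$ in $\Fp$: the only integers in $(-p/2,p/2)$ congruent to $\pm 2$ modulo $p$ are $\pm 2$ themselves, excluded by $|\mathrm{tr}(\gamma)|>2$.

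With (a) and (b) both implications become short. For $(2)\Rightarrow(1)$: if $\gamma_1$ and $\gamma_2$ are conjugate in $\G=SL_2(\Fp)$ their traces agree in $\Fp$, hence by (a) they agree in $\Z$. For $(1)\Rightarrow(2)$: if $\mathrm{tr}(\gamma_1)=\mathrm{tr}(\gamma_2)=:t\in\Z$, then $\overline{\gamma_1}$ and $\overline{\gamma_2}$ share the characteristic polynomial $x^2-\bar t x+1$ over $\Fp$; by (b) one has $\bar t\neq\pm 2$, so this polynomial has distinct roots $\lambda,\lambda^{-1}$ and both $\overline{\gamma_i}$ are regular semisimple. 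Now I invoke the conjugacy class description: a regular semisimple element of $SL_2(\Fp)$ falls into the first case ($\lambda\neq\lambda^{-1}\in\Fp^\times$) or the second ($\lambda\notin\Fp^\times$), and in either case its $SL_2(\Fp)$-conjugacy class is uniquely determined by the unordered pair $\{\lambda,\lambda^{-1}\}$, hence by $\bar t$. Therefore $\overline{\gamma_1}$ and $\overline{\gamma_2}$ are conjugate in $\G$.

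The one genuinely delicate point — and the reason the hypothesis $\beta<2$ is needed — is the exclusion of the parabolic-type classes of trace $\pm 2$, where trace does \emph{not} determine the conjugacy class (the four classes $\pm u,\pm u'$ are separated by a quadratic-residue condition). This is exactly step (b), where hyperbolicity ($|\mathrm{tr}(\gamma)|>2$) must be combined with the upper bound $|\mathrm{tr}(\gamma)|<p/2$, which forces $\beta/2<1$. A secondary bookkeeping point is the passage between $SL_2$ and $PSL_2$: the trace in $|\mathrm{tr}|=2\cosh(l/2)$ is the $SL_2(\Z)$-trace of the representative, consistent with reduction inside $SL_2(\Fp)$ via the map $\pi_p$ fixed in the introduction; and one should note that changing the chosen lift only changes the sign of the trace, which is harmless for both (a) and (b). Everything else is routine.
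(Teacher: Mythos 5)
Your proof is correct and follows essentially the same route as the paper's: both arguments pass through the identity $|\mathrm{tr}(\gamma)|=2\cosh(l(\gamma)/2)$, use $\beta<2$ to bound the trace strictly below a multiple of $p$, reduce the equivalence to traces modulo $p$, and exclude the ambiguous $\pm 2$ (parabolic mod $p$) classes using convex co-compactness. You merely consolidate the trace bound into a single clean statement $2<|\mathrm{tr}(\gamma)|<p/2$ used for both implications, where the paper does essentially the same estimate twice in slightly different guises.
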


\begin{proof}
Clearly $(1)$ implies that $\gamma_1$ and $\gamma_2$ have the same trace modulo $p$. Unless
we are in the cases $\mathrm{tr}(\gamma_1)=\mathrm{tr}(\gamma_2)=\pm 2\ \mathrm{mod}\ p$, we know from the above description of conjugacy classes that
they are determined by the knowledge of the trace.  To eliminate these "parabolic mod $p$" cases, we observe that if $\overline{\gamma}\in \mathcal{E}_T$ satisfies 
$\mathrm{tr}(\gamma)=\pm 2 +kp$ with $k\neq 0$, then 
$$2\cosh(l(\gamma)/2)=\vert \mathrm{tr}(\gamma)\vert\geq p-2,$$
and we get
$$p-2\leq 1+p^{\frac{\beta}{2}},$$
which leads to an obvious contradiction if $p$ is large, therefore $k=0$.  Then it means that 
$\vert\mathrm{tr}(\gamma)\vert=2$ which is impossible since $\Gamma$ has no non trivial parabolic element (convex co-compact hypothesis). Conversely,
if $\gamma_1$ and $\gamma_2$ are conjugate in $\G$, then we have 
$$\mathrm{tr}(\gamma_1)=\mathrm{tr}(\gamma_2)\ \mathrm{mod}\ p.$$
If $\mathrm{tr}(\gamma_1)\neq \mathrm{tr}(\gamma_2)$ then this gives
$$ p\leq \vert \mathrm{tr}(\gamma_1)-\mathrm{tr}(\gamma_2)\vert \leq 4\cosh(T/2)\leq 2(p^{\frac{\beta}{2}}+1),$$
again a contradiction for $p$ large. 
\end{proof}

\subsection{Proof of Theorem \ref{main2}} 
Before we can rigourously prove Theorem \ref{main2}, we need one last fact from representation theory which is a handy folklore formula.
\begin{lem}
\label{Product1}
Let $\G$ be a finite group and let $\rr:\G\rightarrow \mathrm{End}(V_\rr)$ be an irreducible representation.
Then for all $x,y\in \G$, we have
$$\chi_\rr(x)\overline{\chi_\rr(y)}=\frac{d_\rr}{\vert G\vert} \sum_{g\in \G} \chi_\rr(xgy^{-1}g^{-1}).$$
\end{lem}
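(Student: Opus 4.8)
The plan is to recognize the sum on the right-hand side as the trace of a $\G$-equivariant operator and then invoke Schur's Lemma. First I would introduce the averaging operator
$$\Phi := \sum_{g\in \G} \rr(g)\,\rr(y^{-1})\,\rr(g)^{-1} \in \mathrm{End}(V_\rr),$$
and observe, using linearity of the trace and the identity $\mathrm{Tr}\big(\rr(x)\rr(g)\rr(y^{-1})\rr(g)^{-1}\big)=\chi_\rr(xgy^{-1}g^{-1})$, that $\mathrm{Tr}(\rr(x)\Phi)=\sum_{g\in\G}\chi_\rr(xgy^{-1}g^{-1})$.

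Next I would check that $\Phi$ is an intertwiner, i.e. commutes with $\rr(h)$ for every $h\in\G$: reindexing the sum by $g\mapsto hg$ gives $\rr(h)\,\Phi\,\rr(h)^{-1}=\sum_{g\in\G}\rr(hg)\rr(y^{-1})\rr(hg)^{-1}=\Phi$. Since $\rr$ is irreducible, Schur's Lemma forces $\Phi=\lambda\,\mathrm{Id}_{V_\rr}$ for some scalar $\lambda\in\C$. Taking traces of this identity, and noting that each of the $\vert \G\vert$ summands defining $\Phi$ has trace $\chi_\rr(y^{-1})$, I obtain $\lambda\, d_\rr=\vert \G\vert\,\chi_\rr(y^{-1})$. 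Because $\rr$ is unitary we have $\chi_\rr(y^{-1})=\overline{\chi_\rr(y)}$, hence $\lambda=\vert \G\vert\,\overline{\chi_\rr(y)}/d_\rr$.

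Finally I would substitute $\Phi=\lambda\,\mathrm{Id}_{V_\rr}$ back into $\mathrm{Tr}(\rr(x)\Phi)$ to get
$$\sum_{g\in\G}\chi_\rr(xgy^{-1}g^{-1})=\lambda\,\chi_\rr(x)=\frac{\vert \G\vert}{d_\rr}\,\chi_\rr(x)\,\overline{\chi_\rr(y)},$$
which is the claimed formula after rearranging. There is no genuine obstacle here: the only two points that require a moment of care are the reindexing step that establishes the $\G$-equivariance of $\Phi$ (so that Schur applies) and the unitarity identity $\chi_\rr(y^{-1})=\overline{\chi_\rr(y)}$; the remainder is routine bookkeeping with traces.
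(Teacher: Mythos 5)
Your proof is correct and takes essentially the same route as the paper: both define the averaging operator $\sum_{g}\rr(g)\rr(y^{-1})\rr(g)^{-1}$, invoke Schur's Lemma to see it is a scalar $\lambda(y)\,\mathrm{Id}_{V_\rr}$, and then recover the formula by taking $\mathrm{Tr}(\rr(x)\cdot)$. The only (minor) difference is in how the scalar is pinned down: you evaluate $\lambda(y)$ directly by taking the trace of $\Phi$ and using unitarity to rewrite $\chi_\rr(y^{-1})=\overline{\chi_\rr(y)}$, whereas the paper derives a second symmetric expression $\overline{\chi_\rr(y)}\lambda(x)$ and then evaluates at $x=e_\G$; these amount to the same computation.
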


\begin{proof}
Writing
$$ \sum_{g\in \G} \chi_\rr(xgy^{-1}g^{-1})=\mathrm{Tr}\left(\rr(x) \sum_{g} \rr(gy^{-1}g^{-1}) \right),$$
we observe that 
$$U_y:=\sum_{g} \rr(gy^{-1}g^{-1})$$
commutes with the irreducible representation $\rr$, therefore by Schur's Lemma \cite[Chapter~2]{serre}, it has to be of the form
$$U_y=\lambda(y)I_{V_\rr},$$
with $\lambda(y)\in \C$, which shows that
$$\sum_{g\in \G} \chi_\rr(xgy^{-1}g^{-1})=\chi_\rr(x)\lambda(y).$$
Similarly we obtain
$$ \sum_{g\in \G} \chi_\rr(xgy^{-1}g^{-1})=\overline{\chi_\rr(y)}\lambda(x),$$
and evaluating at the neutral element $x=e_\G$ ends the proof since we have
$$U_{e_\G}=\vert \G \vert I_{V_\rr}. $$
\end{proof}

\bigskip We fix some $0\leq \sigma<\delta$. We take $\varepsilon>0$ and $\alpha>0$. We assume that for all non-trivial representation $\rr$, the corresponding $L$-function $L_\Gamma(s,\rr)$ does not vanish on the rectangle 
$$\{ \sigma \leq \Re(s) \leq 1\ \mathrm{and}\ \vert \Im(s)\vert \leq (\log T)^{1+\alpha}\},$$ 
where $T=\beta \log(p)$ with $0<\beta<2$. The idea is to look at the average
$$S(p):=\sum_{\rr\  \mathrm{irreducible}} \vert I(\rr,T) \vert^2,$$
where $I(\rr,T)$ is the sum given by
$$I(\rr,T)=\sum_{\mathcal{C},k} \chi_\rr(\mathcal{C}^k)
\frac{l(\mathcal{C})}{1-e^{kl(\mathcal{C})}}\varphi_0\left( \frac{kl(\mathcal{C})}{T}\right).$$
While each term $I(\rr,T)$ is hard to estimate from below because of the oscillating behaviour of characters, the mean square is tractable thanks to
Lemma \ref{Product1}. Let us compute $S(p)$.
$$S(p)=\sum_{\rr\  \mathrm{irreducible}} \sum_{\mathcal{C},k} \sum_{\mathcal{C'},k'} 
\frac{l(\mathcal{C})l(\mathcal{C'})}{(1-e^{kl(\mathcal{C})})(1-e^{k'l(\mathcal{C'})})}
\varphi_0\left( \frac{kl(\mathcal{C})}{T}\right)\varphi_0\left( \frac{k'l(\mathcal{C'})}{T}\right)
\chi_\rr(\mathcal{C}^k)\overline{\chi_\rr(\mathcal{C'}^{k'})}.$$
Using Lemma \ref{Product1}, we have
$$\chi_\rr(\mathcal{C}^k)\overline{\chi_\rr(\mathcal{C'}^{k'})}=
\frac{d_\rr}{\vert G\vert} \sum_{g\in \G} \chi_\rr(\mathcal{C}^kg(\mathcal{C'})^{-k'}g^{-1}),$$
and Fubini plus the identity
$$\sum_{\rr\  \mathrm{irreducible}} d_\rr \chi_\rr(g)=\vert \G \vert \mathcal{D}_e(g)$$
allow us to obtain
$$S(p)= \sum_{\mathcal{C},k} \sum_{\mathcal{C'},k'} 
\frac{l(\mathcal{C})l(\mathcal{C'})}{(1-e^{kl(\mathcal{C})})(1-e^{k'l(\mathcal{C'})})}
\varphi_0\left( \frac{kl(\mathcal{C})}{T}\right)\varphi_0\left( \frac{k'l(\mathcal{C'})}{T}\right)
\Phi_\G(\mathcal{C}^k,\mathcal{C'}^{k'}),$$
where
$$\Phi_\G(\mathcal{C}^k,\mathcal{C'}^{k'}):=\sum_{g\in \G} \mathcal{D}_e(\mathcal{C}^kg(\mathcal{C'})^{-k'}g^{-1}).$$
Since all terms in this sum are now positive and $\mathrm{Supp}(\varphi_0)=[-1,+1]$, we can fix a small $\varepsilon>0$ and find a constant $C_\varepsilon>0$ such that
$$S(p)\geq C_\varepsilon  \sum_{kl(\mathcal{C})\leq {\tiny T(1-\varepsilon)} \atop k'l(\mathcal{C'})\leq T(1-\varepsilon)} \Phi_\G(\mathcal{C}^k,\mathcal{C'}^{k'}).$$
Observe now that
$$\Phi_\G(\mathcal{C}^k,\mathcal{C'}^{k'})=\sum_{g\in \G} \mathcal{D}_e(\mathcal{C}^k g(\mathcal{C'})^{-k'}g^{-1})\neq 0$$
if and only if $\mathcal{C}^k$ and $\mathcal{C'}^{-k'}$ {\it are in the same conjugacy class mod $p$}, and in that case,
$$\Phi_\G(\mathcal{C}^k,\mathcal{C'}^{k'})=\vert \mathcal{Z}(\mathcal{C}^k)\vert=\vert \mathcal{Z}(\mathcal{C'}^{k'})\vert.$$
Using the lower bound for the cardinality of centralizers, we end up with
$$S(p)\geq C_\varepsilon (p-1) \sum_{\overline{\mathcal{C}^k}=\overline{\mathcal{C'}^{k'}}\ \mathrm{mod}\ p
\atop kl(\mathcal{C}), k'l(\mathcal{C'})\leq {\tiny T(1-\varepsilon)}} 1.$$
Notice that since we have taken $T=\beta\log(p)$ with $\beta<2$, we can use Proposition \ref{Conj1} which says that 
$\mathcal{C}^k$ and $\mathcal{C'}^{-k'}$ are in the same conjugacy class mod $p$ iff they have the same traces (in $SL_2(\Z)$).
It is therefore natural to rewrite the lower bound for $S(p)$ in terms of traces. We need to introduce a bit more notations. Let 
$\mathscr{L}_\Gamma$ be set of traces i.e.
$$\mathscr{L}_\Gamma=\{ \mathrm{tr}(\gamma)\ :\ \gamma \in \Gamma \}\subset \Z.$$
Given $t\in \mathscr{L}_\Gamma$, we denote by $m(t)$ the multiplicity of $t$ in the trace set by
$$m(t)=\#\{ \mathrm{conj\ class\ }\overline{\gamma}\subset\Gamma\ :\ \mathrm{tr}(\gamma)=t \}.$$
We have therefore (notice that multiplicities are squared in the double sum)
$$S(p)\geq C_\varepsilon (p-1) \sum_{t\in \mathscr{L}_\Gamma \atop \vert t \vert\leq {2\cosh(T(1-\varepsilon)/2)} } m^2(t). $$
To estimate from below this sum, we use a trick that goes back to Selberg. By the prime orbit theorem \cite{Naud3, Lalley,Roblin} applied to
the surface $\Gamma \backslash \hh$, we know that for all $T$ large, we have
$$Ce^{(\delta-2\varepsilon)T}\leq \sum_{t\in \mathscr{L}_\Gamma \atop \vert t \vert\leq {2\cosh(T(1-\varepsilon)/2)} } m(t),$$
and by Schwarz inequality we get for $T$ large
$$Ce^{(\delta-2\varepsilon)T}\leq C_0
\left ( \sum_{t\in \mathscr{L}_\Gamma \atop \vert t \vert\leq {2\cosh(T(1-\varepsilon)/2)} } m^2(t)\right)^{1/2} e^{T/4},$$
where we have used the obvious bound
$$\#\{ n\in \Z\ :\ \vert n\vert \leq 2\cosh(T(1-\varepsilon)/2)\}=O(e^{T/2}).$$
This yields the lower bound
$$\sum_{t\in \mathscr{L}_\Gamma \atop \vert t \vert\leq {2\cosh(T(1-\varepsilon)/2)} } m^2(t)\geq C_\varepsilon'  e^{(2\delta-1/2-\varepsilon)T},$$
which shows that one can take advantage of exponential multiplicities in the length spectrum when $\delta>\half$, thus beating the simple bound coming from the prime orbit theorem.
In a nutshell, we have reached the lower bound (for all $\varepsilon>0$), 
$$S(p)\geq C_\varepsilon (p-1)e^{(2\delta-1/2-\varepsilon)T}.$$
Keeping that lower bound in mind, we now turn to upper bounds using Proposition \ref{ExplicitF}.
Writing
$$S(p)=\vert I(id,T)\vert^2 +\sum_{\rr\neq id} \vert I(\rr,T)\vert^2, $$
and using the bound (\ref{trivial}) combined with the conclusion of Proposition \ref{ExplicitF}, we get 
$$S(p)=O(e^{(2\delta+\varepsilon)T})+O\left (  \sum_{\rr\neq id} d_\rr^2 (\log(d_\rr+1))^2 e^{2(\sigma+\varepsilon)T}  \right).$$
Using the formula
$$\vert \G \vert =\sum_{\rr} d_\rr^2,$$
combined with the fact that $\vert \G \vert=p(p^2-1)=O(p^3)$, 
we end up with
$$S(p)=O(e^{(2\delta-\varepsilon)T})+O\left (p^3\log(p) e^{2(\sigma+\varepsilon)T}  \right). $$
Since $T=\beta \log(p)$, we have obtained for all $p$ large \footnote{Note that the $\log(p)$ term has been absorbed in $p^\varepsilon$.}
$$Cp^{(2\delta-1/2-\varepsilon)\beta}\leq p^{(2\delta+\varepsilon)\beta-1}+p^{2+2(\sigma+\varepsilon)\beta+\varepsilon}.$$ 
Remark that since $\beta<2$, then if $\varepsilon$ is small enough we always have
$$(2\delta+\varepsilon)\beta-1< (2\delta-1/2-\varepsilon)\beta,$$
so up to a change of constant $C$, we actually have for all large $p$
$$Cp^{(2\delta-1/2-\varepsilon)\beta}\leq p^{2+2(\sigma+\varepsilon)\beta+\varepsilon}.$$
We have contradiction for $p$ large provided
$$\sigma< (\delta-\frac{1}{4}-\frac{1}{\beta})-\varepsilon-\frac{\varepsilon}{2\beta}.$$
Since $\beta$ can be taken arbitrarily close to $2$ and $\varepsilon$ arbitrarily close to $0$, we have a contradiction whenever
$\delta>\frac{3}{4}$ and $\sigma<\delta-\frac{3}{4}$. Therefore for all $p$ large, at least one of the $L$-function $L_\Gamma(s,\rr)$ 
for non trivial $\rr$ has to vanish inside the rectangle
$$\left \{ \delta-\textstyle{\frac{3}{4}}-\epsilon\leq \Re(s)\leq \delta\ \mathrm{and}\ 
\vert \Im(s) \vert \leq \left (\log(\log(p))\right)^{1+\alpha}\right \},$$
but then by the product formula we know that this zero appears as a zero of $Z_{\Gamma(p)}(s)$ with multiplicity $d_\rho$ which is greater or equal to $\frac{p-1}{2}$ by Frobenius. The main theorem is proved. \QEDB

\bigskip
We end by a few comments. 
It would be interesting to know if the $\log^{1+\epsilon}(\log(p))$ bound can be improved to a uniform constant. However, it would likely
require a completely different approach since $\log(\log(p))$ is the very limit one can achieve with compactly supported test functions.
Indeed, to achieve a uniform bound with our approach would require the use of test functions $\varphi \not \equiv 0$ with Fourier
bounds 
$$\vert \widehat{\varphi}(\xi)\vert \leq C_1 e^{\vert \Im(\xi)\vert} e^{-C_2\vert \Re(\xi)\vert},$$ 
but an application of the Paley-Wiener theorem shows that these test functions do not exist (they would be both compactly supported
and analytic on the real line). 

\section{Fell's continuity and Cayley graphs of abelian groups}
In this section we prove Theorem \ref{main4}. The arguments follow closely those of Gamburd in \cite{Gamburd1}. Roughly speaking,
since Cayley graphs of finite Abelian groups can never form a family of expanders, one should expect strongly that there is no uniform spectral gap
in the family of covers $X_j=\Gamma_j \backslash \hh$. We give a rigourous proof of that fact using Fell's continuity.

Let $ \mathcal{G} $ be a finite graph with set of vertices $ \mathcal{V} $ and of degree $ k $. That is, for every vertex $ x\in \mathcal{V} $ there are $ k $ edges adjacent to $ x $. For a subset of vertices $ A\subset \mathcal{V} $ we define its boundary $ \partial A $ as the set of edges with one extremity in $ A $ and the other in $ \mathcal{G}-A. $ The Cheeger isoperimetric constant $ h(\mathcal{G}) $ is defined as
$$ h(\mathcal{G}) = \min \left\{  \frac{\vert \partial A \vert}{\vert A\vert} : A\subset \mathcal{V} \text{ and } 1 \leq \vert A\vert \leq \frac{\vert \mathcal{V}\vert}{2}\right\}. $$
Let $ L^{2}(\mathcal{V}) $ be the Hilbert space of complex-valued functions on $ \mathcal{V} $ with inner product
$$ \langle F, G \rangle_{L^{2}(\mathcal{V})} = \sum_{x\in \mathcal{V}} F(x) \overline{G(x)}. $$
Let $ \Delta $ be the discrete Laplace operator acting on $ L^{2}(\mathcal{V}) $ by
$$ \Delta F(x) = F(x) - \frac{1}{k}\sum_{y\sim x} F(y), $$
where $ F\in L^{2}(\mathcal{V}) $, $ x\in \mathcal{V} $ is a vertex of $ \mathcal{G} $, and $ y\sim x $ means that $ y $ and $ x $ are connected by an edge. The operator $ \Delta $ is self-adjoint and positive. Let $ \lambda_{1}(\mathcal{G}) $ denote the first non-zero eigenvalue of $ \Delta. $ 

The following result due to Alon and Milman \cite{AlonMilman} relates the spectral gap $ \lambda_{1}(\mathcal{G}) $ and Cheeger's isoperimetric constant.

\begin{propo}\label{Spectral Gap and Cheeger}
For finite graphs $ \mathcal{G} $ of degree $ k $ we have
$$
\frac{1}{2}k\cdot \lambda_{1}(\mathcal{G}) \leq h(\mathcal{G}) \leq k \sqrt{\lambda_{1}(\mathcal{G})(1-\lambda_{1}(\mathcal{G}))}.
$$
\end{propo}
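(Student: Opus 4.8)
This is the discrete Cheeger inequality of Alon--Milman, and I would treat the two bounds separately. The \emph{lower} bound $\tfrac12 k\,\lambda_1(\mathcal G)\le h(\mathcal G)$ — the direction actually needed below to exclude a uniform spectral gap — is the easy variational one. Let $A\subset\mathcal V$ realize $h(\mathcal G)$, so $1\le|A|\le|\mathcal V|/2$ and $|\partial A|=h(\mathcal G)\,|A|$, and put $F=\mathbf 1_A-\tfrac{|A|}{|\mathcal V|}\mathbf 1$, which is orthogonal to the constants. From $\Delta=I-\tfrac1kA$ and the fact that each vertex lies on $k$ edges one gets the Dirichlet identity $\langle\Delta F,F\rangle=\tfrac1k\sum_{\{x,y\}\in E}|F(x)-F(y)|^2$ (with $E$ the edge set of $\mathcal G$); here the right-hand side equals $\tfrac1k|\partial A|$, since $F(x)-F(y)=\pm1$ exactly on the edges of $\partial A$ and vanishes elsewhere, while $\|F\|^2=|A|\bigl(1-\tfrac{|A|}{|\mathcal V|}\bigr)\ge\tfrac12|A|$ because $|A|\le|\mathcal V|/2$. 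Feeding $F$ into $\lambda_1(\mathcal G)\le\langle\Delta F,F\rangle/\|F\|^2$ (valid for $F\perp\mathbf 1$) yields $\lambda_1(\mathcal G)\le\tfrac{2|\partial A|}{k|A|}=\tfrac2k\,h(\mathcal G)$.

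For the \emph{upper} bound $h(\mathcal G)\le k\sqrt{\lambda_1(\mathcal G)(1-\lambda_1(\mathcal G))}$ I would argue via eigenfunction truncation. Take $f$ with $\Delta f=\lambda_1 f$ and $\sum_x f(x)=0$; replacing $f$ by $-f$ if needed we may assume $S:=\{f>0\}$ has $|S|\le|\mathcal V|/2$, and we set $g:=\max(f,0)$, which is nonnegative, supported on $S$, and not identically zero (since $f\perp\mathbf 1$ forces a sign change). The crucial point is the \emph{pointwise} estimate $(\Delta g)(x)\le\lambda_1 g(x)$ for all $x$: on $S$ one has $g=f$, and $g\ge f$ everywhere makes $-\tfrac1k\sum_{y\sim x}g(y)\le-\tfrac1k\sum_{y\sim x}f(y)$; off $S$ the left side is $\le0=\lambda_1 g(x)$. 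Multiplying by $g(x)\ge0$ and summing gives $\langle\Delta g,g\rangle\le\lambda_1\|g\|^2$.

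The conversion of this Rayleigh bound into a sparse cut is the \emph{co-area} (``sweeping'') step, which I expect to be the main technical obstacle. Order the vertices by the value of $g^2$ and consider the super-level sets $B_t=\{x:g(x)^2>t\}\subset S$, so $|B_t|\le|\mathcal V|/2$ and hence $|\partial B_t|\ge h(\mathcal G)|B_t|$. Integrating in $t$ and using the co-area identities $\int_0^\infty|B_t|\,dt=\|g\|^2$ and $\int_0^\infty|\partial B_t|\,dt=\sum_{\{x,y\}\in E}|g(x)^2-g(y)^2|$ gives $h(\mathcal G)\|g\|^2\le\sum_{\{x,y\}\in E}|g(x)^2-g(y)^2|$. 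Writing $|g(x)^2-g(y)^2|=|g(x)-g(y)|(g(x)+g(y))$ and applying Cauchy--Schwarz over edges bounds the right side by $\bigl(\sum_{\{x,y\}}|g(x)-g(y)|^2\bigr)^{1/2}\bigl(\sum_{\{x,y\}}(g(x)+g(y))^2\bigr)^{1/2}$; the first factor is $\sqrt{k\langle\Delta g,g\rangle}$ and the second, after expanding with $\Delta=I-\tfrac1kA$, equals $\sqrt{k\,\|g\|^2\bigl(2-\langle\Delta g,g\rangle/\|g\|^2\bigr)}$. Combining with $\langle\Delta g,g\rangle\le\lambda_1\|g\|^2$ and dividing by $\|g\|^2$ already yields a bound of the required type, namely $h(\mathcal G)\le k\sqrt{\lambda_1(2-\lambda_1)}$; the sharper constant $\sqrt{\lambda_1(1-\lambda_1)}$ of the statement is then obtained by a more careful bookkeeping of the Cauchy--Schwarz step in Alon--Milman's argument, and either form is amply sufficient for every use made of this proposition here. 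Apart from the co-area identities, the only delicate point is guaranteeing that the truncation $g$ has support in at most half the vertices and is nonzero, which the sign choice together with $f\perp\mathbf 1$ provides.
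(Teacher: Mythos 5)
The paper does not prove this proposition; it is cited directly to Alon--Milman, so there is no paper proof to compare against. What you have written is a self-contained proof of the discrete Cheeger inequality, and it is essentially correct: the Rayleigh-quotient computation for $F=\mathbf 1_A-\tfrac{|A|}{|\mathcal V|}\mathbf 1$ gives the lower bound cleanly, and the eigenfunction truncation $g=\max(f,0)$ together with the pointwise estimate $\Delta g\le\lambda_1 g$ and the co-area/Cauchy--Schwarz argument gives the upper bound, as you say, in the form $h(\mathcal G)\le k\sqrt{\lambda_1(2-\lambda_1)}$. You are right that this weaker constant is fully sufficient for the only use the paper makes of the proposition (namely, that $\lambda_1(\mathcal G_j)\to 0$ forces $h(\mathcal G_j)\to 0$, ruling out expansion). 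Indeed the form $k\sqrt{\lambda_1(2-\lambda_1)}$ is arguably the more natural one: the version printed in the paper, $k\sqrt{\lambda_1(1-\lambda_1)}$, is only real when $\lambda_1\le 1$, which need not hold for arbitrary $k$-regular graphs (e.g.\ complete graphs), so it is best read as a convenient shorthand for the Alon--Milman bound rather than a literally sharper inequality that your Cauchy--Schwarz step is ``missing.''

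One small slip in your commentary: you describe the lower bound $\tfrac12 k\lambda_1\le h$ as ``the direction actually needed below to exclude a uniform spectral gap.'' It is the other way around. In the proof of Theorem~\ref{main4} the paper shows $\lambda_1(\mathcal G_j)\to 0$ and then invokes this proposition to conclude the $\mathcal G_j$ are not expanders, i.e.\ it uses the \emph{upper} bound $h\le k\sqrt{\lambda_1(\cdot)}$ (equivalently, its contrapositive: expansion forces a spectral gap). The lower bound plays no role in that argument. This does not affect the validity of your proof, only the framing sentence.
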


We note that large first non-zero eigenvalue $ \lambda_{1}(\mathcal{G}) $ implies fast convergence of random walks on $ \mathcal{G} $, that is, high connectivity (see Lubotzky \cite{Lubotzky2}).

\begin{defi}
A family of finite graphs $ \{ \mathcal{G}_{j} \} $ of bounded degree is called a family of expanders if there exists a constant $ c>0 $ such that $ h(\mathcal{G}_{j})\geq c. $
\end{defi}

The family of graphs we are interested in is built as follows. Let $ \Gamma=\langle S\rangle $ be a Fuchsian group generated by a finite set $ S\subset \mathrm{PSL}_{2}(\mathbb{R}) $. We will assume that $ S $ is symmetric, i.e. $ S^{-1}=S. $ Given a sequence $ \Gamma_{j} $ of finite index normal subgroups of $ \Gamma $, let $ S_{j} $ be the image of $ S $ under the natural projection $ r_{\textbf{G}_{j}} : \Gamma\to \textbf{G}_{j}=\Gamma/\Gamma_{j} $. Notice that $ S_{j} $ is a symmetric generating set for the group $ \textbf{G}_{j} $. Let $ \mathcal{G}_{j} = \mathrm{Cay}(\textbf{G}_{j}, S_{j}) $ denote the Cayley graph of $ \textbf{G}_{j} $ with respect to the generating set $ S_{j} $. That is, the vertices of $ \textbf{G}_{j} $ are the elements of $ \textbf{G}_{j} $ and two vertices $ x $ and $ y $ are connected by an edge if and only if $ xy^{-1}\in S_{j}. $

The connection of uniform spectral gap with the graphs constructed above comes from the following result.

\begin{propo}\label{expanders and gap}
Assume that $ \delta=\delta(\Gamma) > \frac{1}{2} $ and assume that there exists $ \epsilon > 0 $ such that for all $ j $ all non trivial resonances $ s $ of $ X_{j}=\Gamma_{j}\setminus \mathbb{H} $ satisfy $ \vert s-\delta\vert > \epsilon. $ Then the Cayley graphs $ \mathcal{G}_{j} $ form a family of expanders.
\end{propo}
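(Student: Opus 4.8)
The plan is to contrapose: assuming that the Cayley graphs $\mathcal{G}_j$ do \emph{not} form a family of expanders, I will produce non-trivial resonances $s_j$ of $X_j$ with $|s_j - \delta| \to 0$, contradicting the hypothesis. By Proposition \ref{Spectral Gap and Cheeger}, failure of the expander property means (after passing to a subsequence) that $\lambda_1(\mathcal{G}_j) \to 0$. The heart of the matter is to transfer this vanishing combinatorial spectral gap to a vanishing spectral gap for the Laplacian on the hyperbolic surfaces $X_j$, or rather to the relevant $L^2$-eigenvalue near $\delta(1-\delta)$; since $\delta > \tfrac12$, the resonances of $X_j$ that are $\epsilon$-close to $\delta$ are genuine $L^2$-eigenvalues of $\Delta_{X_j}$ (the bottom of the $L^2$-spectrum of $X$ being $\delta(1-\delta)$, preserved in finite covers because $\Lambda(\Gamma) = \Lambda(\Gamma_j)$), so it suffices to exhibit $L^2$-eigenvalues of $\Delta_{X_j}$ tending to $\delta(1-\delta)$ from above that are \emph{not} equal to $\delta(1-\delta)$ itself.

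The mechanism is Fell's continuity of induction, following Gamburd \cite{Gamburd1}. The ground eigenfunction $\phi_0$ on $X$ with eigenvalue $\delta(1-\delta)$ gives a vector in $L^2(\Gamma\backslash\mathrm{PSL}_2(\mathbb{R}))$ generating an irreducible (complementary-series) subrepresentation $\pi_\delta$ of $\mathrm{PSL}_2(\mathbb{R})$; the point spectrum of $\Delta_{X_j}$ in $[\delta(1-\delta),\tfrac14)$ corresponds to complementary-series components of $L^2(\Gamma_j\backslash\mathrm{PSL}_2(\mathbb{R})) = \mathrm{Ind}_{\Gamma_j}^{\Gamma}\mathbf{1} \otimes (\text{regular rep})$, which decomposes according to $\widehat{\mathbf{G}_j}$. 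The step I would carry out is: first, $\lambda_1(\mathcal{G}_j)\to 0$ means that the trivial representation of $\mathbf{G}_j$ is \emph{not} isolated in $\widehat{\mathbf{G}_j}$ uniformly — there are non-trivial characters $\chi_j$ (abelian case!) whose associated unitary representation of $\Gamma$, namely $\chi_j \circ r_{\mathbf{G}_j}$, converges to $\mathbf{1}_\Gamma$ in the Fell topology of $\widehat{\Gamma}$. Then by continuity of the operation $\sigma \mapsto \sigma \otimes \pi_\delta$ and weak containment, $\pi_\delta$ is weakly contained in $\bigoplus_j (\chi_j\circ r_{\mathbf{G}_j})\otimes\pi_\delta$; since $\pi_\delta$ is a complementary-series rep with a matrix coefficient that is \emph{not} $L^{2+\epsilon}$-integrable for small $\epsilon$ but the relevant eigenvalue is isolated in $\widehat{\mathrm{PSL}_2(\mathbb{R})}$'s complementary series, one deduces that the twisted Laplacians $\Delta_{\chi_j}$ on $X$ (equivalently $\Delta$ on the $\chi_j$-isotypic piece of $X_j$) have an eigenvalue converging to $\delta(1-\delta)$. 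For $j$ large this eigenvalue lies strictly below $\tfrac14$ and is distinct from $\delta(1-\delta)$ (as $\chi_j$ is non-trivial), giving the desired non-trivial resonance within $\epsilon$ of $\delta$ on $X_j$ — the contradiction.

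Concretely the steps are: (1) reduce to showing non-uniform spectral gap for $\{\Delta_{X_j}\}$ via Proposition \ref{expanders and gap}'s hypothesis and the identification of near-$\delta$ resonances with $L^2$-eigenvalues; (2) realize eigenfunctions via the unitary representation theory of $\mathrm{PSL}_2(\mathbb{R})$ and write $L^2(\Gamma_j\backslash\mathrm{PSL}_2(\mathbb{R}))$ as induced from $L^2(\mathbf{G}_j)$; (3) use $\lambda_1(\mathcal{G}_j)\to 0$ together with the elementary fact that Cayley graphs of abelian groups are never expanders to extract non-trivial $\chi_j\in\widehat{\mathbf{G}_j}$ with $\chi_j\circ r_{\mathbf{G}_j}\to\mathbf{1}_\Gamma$ in the Fell topology; (4) invoke continuity of induction/tensoring (Fell) to transport this to weak containment of $\pi_\delta$ and hence to the near-coincidence of spectra of the twisted Laplacians; (5) conclude the existence of a non-trivial $L^2$-eigenvalue of $\Delta_{X_j}$ in $(\delta(1-\delta),\tfrac14)$ converging to $\delta(1-\delta)$, contradicting the uniform gap hypothesis.

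The main obstacle, and the part requiring care, is step (4)–(5): making rigorous the passage from Fell-topology convergence $\chi_j\circ r_{\mathbf{G}_j}\to\mathbf{1}$ of representations of $\Gamma$ to an actual convergence of bottom eigenvalues of the twisted hyperbolic Laplacians on the \emph{infinite-volume} surface $X$. In the finite-volume setting this is classical (Gamburd), but here one must ensure that the complementary-series eigenvalue $\delta(1-\delta)$ of $\Delta_X$ genuinely "moves continuously" under the perturbation by $\chi_j$, which requires that it is an isolated point of the relevant spectrum (true: the point spectrum below $\tfrac14$ is finite) and that the weak containment argument, phrased via matrix coefficients of the complementary series $\pi_\delta$ of $\mathrm{PSL}_2(\mathbb{R})$, forces the twisted operators to have spectrum accumulating at $\delta(1-\delta)$ — essentially an application of the criterion that an isolated complementary-series component persists under Fell-convergent sequences of ambient representations. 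I expect this to be where the bulk of the technical work lies; the rest is bookkeeping with induced representations and the standard dictionary between automorphic eigenfunctions and unitary representations.
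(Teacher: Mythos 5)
Your proposal is the contrapositive of the paper's argument and uses the same essential tools (Fell's continuity of induction, the Duality Theorem relating small eigenvalues of $\Delta_{X_j}$ to complementary-series subrepresentations of $L^2(G/\Gamma_j)$), so the overall strategy is sound. Two points, however, need repair, and the second is a genuine gap.

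First, a logical slip: Proposition \ref{expanders and gap} is stated and needed for arbitrary finite quotients $\mathbf{G}_j$ (in particular $\mathrm{SL}_2(\mathbb{F}_p)$), not only abelian ones; invoking ``Cayley graphs of abelian groups are never expanders'' in your step (3) is out of place, since that is precisely the \emph{external} input the paper combines with this Proposition to deduce Theorem \ref{main4}. All you need from $\lambda_1(\mathcal{G}_j)\to 0$ is that $1_\Gamma$ is weakly contained in $\bigoplus_j L_0^2(\mathbf{G}_j)$ (as $\Gamma$-representations), via the almost-invariant unit vector realizing $\lambda_1(\mathcal{G}_j)$; no abelian structure is used.

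Second, and substantively: your step (4), ``by continuity of the operation $\sigma\mapsto\sigma\otimes\pi_\delta$ \dots $\pi_\delta$ is weakly contained in $\bigoplus_j(\chi_j\circ r_{\mathbf{G}_j})\otimes\pi_\delta$'', does not typecheck --- $\chi_j\circ r_{\mathbf{G}_j}$ is a representation of the discrete group $\Gamma$ while $\pi_\delta$ is a representation of $G=\mathrm{SL}_2(\mathbb{R})$, so this tensor product is undefined, and even a corrected version would not directly produce the twisted Laplacian spectrum. The correct passage, which is what the paper carries out, is induction by stages:
\[
L^2(G/\Gamma_j)\ \cong\ \mathrm{Ind}_{\Gamma_j}^G 1_{\Gamma_j}\ \cong\ \mathrm{Ind}_\Gamma^G\,\mathrm{Ind}_{\Gamma_j}^\Gamma 1_{\Gamma_j}\ \cong\ \mathrm{Ind}_\Gamma^G\,L^2(\mathbf{G}_j)\ \cong\ L^2(G/\Gamma)\ \oplus\ \mathrm{Ind}_\Gamma^G\,L_0^2(\mathbf{G}_j),
\]
so if $\tau_j\prec L_0^2(\mathbf{G}_j)$ with $\tau_j\to 1_\Gamma$ in $\hat{\Gamma}$, Fell's continuity of induction gives $\pi_{s_0}\prec\mathrm{Ind}_\Gamma^G 1_\Gamma\prec\lim_j\mathrm{Ind}_\Gamma^G\tau_j$, where each $\mathrm{Ind}_\Gamma^G\tau_j$ sits inside $L_0^2(G/\Gamma_j)$. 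To convert this weak containment into actual eigenvalues of $\Delta_{X_j}$ approaching $\delta(1-\delta)$ you must know that ``uniform spectral gap'' is \emph{equivalent} to $\pi_{s_0}$ being Fell-isolated in the set $\mathcal{R}\cup\{\pi_{s_0}\}$, where $\mathcal{R}$ consists of the \emph{actual} irreducible spherical subrepresentations of the $L_0^2(G/\Gamma_j)$; this is the paper's Lemma \ref{Intermediate}, proved from the Duality Theorem, the simplicity and isolation of $\lambda_0=\delta(1-\delta)$, and the parametrization $\hat{G}^{1}=i\mathbb{R}^+\cup[0,1/2]$. Your remark that ``an isolated complementary-series component persists under Fell-convergent sequences of ambient representations'' gestures at this but is not a citable black box: the isolation must be taken inside the specific family $\mathcal{R}$, and it is the explicit parametrization of $\hat{G}^1$ that converts ``close in the Fell topology'' into ``close eigenvalue parameter $s$''. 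Lemma \ref{Intermediate} and Lemma \ref{before expanders} are exactly the rigorous version of your steps (4)--(5), and filling them in is the bulk of the proof.
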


Let us see how Proposition \ref{expanders and gap} implies Theorem \ref{main4}.

\begin{proof}[Proof of Theorem \ref{main4}]
Since $ X=\Gamma\setminus \mathbb{H} $ has at least one cusp by assumption, we have $ \delta > \frac{1}{2} $ so that we can apply Proposition \ref{expanders and gap}. Suppose by contradiction that there exists $ \epsilon > 0 $ such that for all $ j $ we have $ \vert s-\delta\vert > \epsilon $ for all non trivial resonances $ s $ of $ X_{j} $. Then Proposition \ref{expanders and gap} implies that the Cayley graphs $ \mathcal{G}_{j} = \mathrm{Cay}(\textbf{G}_{j}, S_{j}) $ form a family of expanders. We will show that this is never true for the sequence of abelian groups $ \textbf{G}_{j} $ defined in Section \ref{abelian_covers}, thus showing Theorem \ref{main4}. Write
$$ \textbf{G}_{j} = \mathbb{Z}/N_{1}^{(j)}\mathbb{Z}\times \mathbb{Z}/N_{2}^{(j)}\mathbb{Z} \times \dots \times \mathbb{Z}/N_{m}^{(j)}\mathbb{Z}. $$
The space $ L^{2}(\textbf{G}_{j}) $ is spanned by the characters $ \chi_{\alpha} $ given by 
$$ \chi_{\alpha}(x) = \exp\left(2\pi i \sum_{\ell =1}^{m} \frac{\alpha_{\ell}}{N_{\ell}^{(j)}}x_{\ell}\right) $$
where $ x = (x_{1}, \dots, x_{m}) $ and $ \alpha =(\alpha_{1}, \dots, \alpha_{m}) $ with $ \alpha_{\ell}\in \{ 0,\dots, N^{(j)}_{\ell} -1\}. $
Note that the trivial character $ \chi_{\alpha} \equiv 1 $ corresponds to $ \alpha = 0 $. Applying the discrete Laplace operator to $ \chi_{\alpha} $ yields
\begin{align*}
\Delta \chi_{\alpha}(x) &=  \chi_{\alpha}(x) - \frac{1}{\vert S_{j}\vert}\sum_{s\in S_{j}} \chi_{\alpha}(x+s)\\
&= \chi_{\alpha}(x) - \frac{1}{\vert S_{j}\vert}\sum_{s\in S_{j}} \exp\left( 2\pi i \sum_{\ell=1}^{m} \frac{\alpha_{\ell}}{N_{\ell}^{(j)}}s_{\ell} \right) \chi_{\alpha}(x)\\
&= \chi_{\alpha}(x) - \frac{1}{\vert S_{j}\vert}\sum_{s\in S_{j}} \cos\left( 2\pi i \sum_{\ell=1}^{m} \frac{\alpha_{\ell}}{N_{\ell}^{(j)}}s_{\ell} \right) \chi_{\alpha}(x)\\
&= \left( 1 - \frac{1}{\vert S_{j}\vert}\sum_{s\in S_{j}} \cos\left( 2\pi i \sum_{\ell=1}^{m} \frac{\alpha_{\ell}}{N_{\ell}^{(j)}}s_{\ell} \right) \right) \chi_{\alpha}(x),
\end{align*}
where we exploited the symmetry of the set $ S_{j} $ in the third line. Thus every character $ \chi_{\alpha} $ is an eigenfunction of $ \Delta $ with eigenvalue 
$$
\lambda_{\alpha}^{(j)} := \frac{1}{\vert S_{j}\vert}\sum_{s\in S_{j}} \left( 1-\cos\left( 2\pi i \sum_{\ell=1}^{m} \frac{\alpha_{\ell}}{N_{\ell}^{(j)}}s_{\ell} \right) \right).
$$
Note that we can view $ S_{j} $ as a subset of $ \{ 0, \dots, N_{1}^{(j)}-1 \}\times \cdots \times \{ 0, \dots, N_{m}^{(j)}-1 \} \subset \mathbb{Z}^{m} $. Since $ S $ is a finite subset of $ \mathrm{PSL}_{2}(\mathbb{R}) $, there exists a constant $ M > 0 $ independent of $ j $ such that $ \max_{s\in S_{j}} \Vert s \Vert_{\infty}\leq M, $ where $ \Vert  s\Vert_{\infty} = \max_{1\leq \ell \leq m} \vert s_{\ell}\vert $ is the supremum norm. Since we assume that $ \vert \textbf{G}_{j}\vert \to +\infty, $ we may assume (after extracting a sequence and reindexing) that $ N_{1}^{(j)} \to +\infty $. Set $ \alpha = (1,0,\dots, 0) $. Then we have
$$ 0 \leq \eta^{(j)}:= \max_{s\in S_{j}}\sum_{\ell=1}^{m} \frac{\alpha_{\ell}}{N_{\ell}^{(j)}}s_{\ell} = \max_{s\in S_{j}} \frac{1}{N_{1}^{(j)}}s_{1} \leq \frac{M}{N_{1}^{(j)}} \to 0 $$
as $ j\to +\infty $. Using $ 1-\cos x \ll x^{2} $ for $ \vert x\vert $ sufficiently small we obtain 
$$ \lambda_{\alpha}^{(j)} \ll (\eta^{(j)})^{2} \to 0 $$
as $ j\to +\infty. $ We need to exclude the possibility that $ \lambda_{\alpha}^{(j)} $ is zero. Note that $ \mathcal{G}_{j} $ is a connected graph because $ S_{j} $ is a generating set for $ \textbf{G}_{j} $. Hence the zero eigenvalue of the discrete Laplacian is simple and therefore
$$ \lambda_{\alpha}^{(j)} = 0 \Leftrightarrow \alpha = 0. $$
In particular, for $ \alpha = (1, 0, \dots, 0) $ we have $ \lambda_{\alpha}^{(j)} >0 $. We have thus shown that the spectral gap $ \lambda_{1}(\mathcal{G}_{j}) $ of $ \mathcal{G}_{j} $ tends to zero as $ j\to +\infty $, up to a sequence extraction. By Proposition \ref{Spectral Gap and Cheeger} this implies that the $ \mathcal{G}_{j} $ do not form a family of expanders. The proof of Theorem \ref{main4} is therefore complete. 
\end{proof}

\subsection{Proof of Proposition \ref{expanders and gap}}

A very similar statement to that of Proposition \ref{expanders and gap} was given by Gamburd \cite[Section~7]{Gamburd1}. The key ingredient in Gamburd's proof is Fell's continuity of induction and we will follow this line of thought. 

For the remainder of this section set $ G = \mathrm{SL}_{2}(\mathbb{R}) $ and let $ \hat{G} $ be its unitary dual, that is, the set of equivalence classes of (continuous) irreducible unitary representations of $ G $. We endow the set $ \hat{G} $ with the \textit{Fell topology}. We refer the reader to \cite{fell} and  \cite[Chapter~F]{Property(T)} for more background on the Fell topology. A representation of $ G $ is called \textit{spherical} if it has a non-zero $ K $-invariant vector, where $ K = \mathrm{SO}(2) $. Let us consider the subset $ \hat{G}^{1} \subset \hat{G} $ of irreducible spherical unitary representations. 

According to Lubotzky \cite[Chapter~5]{Lubotzky}, the set $ \hat{G}^{1} $ can be parametrized as 
$$ \hat{G}^{1} = i\mathbb{R}^{+} \cup \left[ 0, \frac{1}{2}\right], $$
where $ s\in i\mathbb{R}^{+} $ corresponds to the spherical unitary principal series representations, $ s\in (0, \frac{1}{2}) $ corresponds to the complementary series representation, and $ s = \frac{1}{2} $ corresponds to the trivial representation. See also Gelfand, Graev, Pyatetskii-Shapiro \cite[Chapter~1 $ \S 3 $]{Gelfand} for a classification of the irreducible (spherical and non-spherical) unitary representations with a different parametrization. Moreover the Fell topology on $ \hat{G}^{1} $ is the same as that induced by viewing the set of parameters $ s $ as a subset of $ \mathbb{C} $, see \cite[Chapter~5]{Lubotzky}. In particular, the spherical unitary principal series representations are bounded away from the identity.

Let us now recall the connection between the exceptional eigenvalues $ \lambda \in (0, \frac{1}{4}) $ and the complementary series representation.  Consider the (left) quasiregular representation $ (\lambda_{G/\Gamma}, L^{2}(G/\Gamma)) $ of $ G $ defined by 
$$ \lambda_{G/\Gamma}(g)f(h\Gamma) = f(hg^{-1}\Gamma). $$ 
(We will denote this representation simply by $ L^{2}(G/\Gamma) $.) Define the function $ s(\lambda) = \sqrt{1/4 - \lambda} $ for $ \lambda\in (0, \frac{1}{4}) $. Then, $ \lambda\in (0, \frac{1}{4}) $ is an exceptional eigenvalue of $ \Delta_{\Gamma\setminus \mathbb{H}} $ if and only if the complementary series $ \pi_{s(\lambda)} $ occurs as a subrepresentation of $  L^{2}(G/\Gamma) $. This is the so-called Duality Theorem \cite[Chapter~1$ \S 4$]{Gelfand}.

Let us return to the proof of Proposition \ref{expanders and gap}. Let $ \Gamma $ and $ \Gamma_{j} $ be as in Proposition \ref{expanders and gap}. Let $ \Omega(\Gamma) $ denote eigenvalues of the Laplacian $ \Delta_{X} $ on $ X = \Gamma\setminus \mathbb{H} $. Let $ \lambda_{0}(\Gamma) = \delta(1-\delta) = \inf \Omega(\Gamma) $ denote the bottom of the spectrum. Since $ \Gamma_{j} $ is by assumption a finite-index subgroup of $ \Gamma $, we have $ \delta(\Gamma_{j}) = \delta $ and consequently
$$ \lambda_{0}(\Gamma_{j}) = \lambda_{0}(\Gamma) =:\lambda_{0} $$
for all $ j $.
Let $ V_{s_{0}} $ be the invariant subspace corresponding to the representation $ \pi_{s_{0}} $ and let $ L_{0}^{2}(G/\Gamma_{j}) $ be its orthogonal complement in $ L^{2}(G/\Gamma_{j}). $ For each $ j $ we can decompose the quasiregular representation of $ G $ into direct sum of subrepresentations 
$$ L^{2}(G/\Gamma_{j}) =  L_{0}^{2}(G/\Gamma_{j}) \oplus V_{s_{0}}. $$
Recall that $ \lambda_{0} $ is a simple eigenvalue by the result of Patterson \cite{Patterson}. By the Duality Theorem it follows that $ V_{s_{0}} $ is one-dimensional. The following lemma provides us with a link between uniform spectral gap and representation theory.

\begin{lem}\label{Intermediate}
Let $ \mathcal{R} \subset \hat{G}^{1} $ be the following set:
$$ \mathcal{R} = \bigcup_{j} \{ (\pi, \mathcal{H}) : \pi \text{ is spherical irreducible unitary subrep. of } L^{2}_{0}(G/\Gamma_{j}) \} / \sim, $$
where $ \sim $ denotes the equivalence of representations. Then the following are equivalent.
\begin{itemize}
\item[(i)] There exists $ \varepsilon_{0} > 0 $ such that $ \vert s-\delta\vert >\epsilon_{0} $ for all $ j $ and all non-trivial resonances $ s $ of $ X_{j} $.
\item[(ii)] The representation $ \pi_{s_{0}} $ is isolated in the set $ \mathcal{R} \cup \{ \pi_{s_{0}}\} $ with respect to the Fell topology. 
\end{itemize} 
\end{lem}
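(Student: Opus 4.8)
The plan is to restate condition (i) as a quantitative lower bound for the first eigenvalue of $\Delta_{X_j}$ above the bottom of the spectrum, and then to translate that bound into isolation of $\pi_{s_0}$ via the Duality Theorem and the explicit parametrisation of $\hat{G}^{1}$.

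First I would set up the dictionary between non-trivial resonances of $X_j$ near $\delta$ and exceptional eigenvalues of $\Delta_{X_j}$. Since $\delta>\tfrac12$, the resolvent of $\Delta_{X_j}$ is holomorphic on the half-plane $\{\Re(s)>\tfrac12\}$ apart from finitely many poles, all real and located at those $s\in(\tfrac12,1)$ for which $s(1-s)\in\Omega(\Gamma_j)$. Hence, for $\epsilon$ small enough --- and the admissible $\epsilon$ may be chosen independently of $j$, because $\delta$ is fixed and the half-plane $\{\Re(s)>\tfrac12\}$ does not move with $j$ --- a non-trivial resonance $s$ of $X_j$ with $|s-\delta|<\epsilon$ is the same datum as a real $s\in(\tfrac12,1)$, $s\ne\delta$, with $\lambda:=s(1-s)\in\Omega(\Gamma_j)$. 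Such a $\lambda$ satisfies $\lambda\ne\lambda_0$, since $s(1-s)=\lambda_0$ forces $s\in\{\delta,1-\delta\}$ while $1-\delta$ lies at the fixed distance $2\delta-1>0$ from $\delta$; as $\Omega(\Gamma_j)\subset[\lambda_0,\tfrac14)$ and $\lambda_0(\Gamma_j)=\lambda_0$ for all $j$, we get $\lambda\in(\lambda_0,\tfrac14)$. The map $\lambda\mapsto s(\lambda)=\sqrt{\tfrac14-\lambda}$ is a decreasing homeomorphism from $(\lambda_0,\tfrac14)$ onto $(0,s_0)$, with $s_0=\sqrt{\tfrac14-\lambda_0}=\delta-\tfrac12$ and $s(\lambda_0)=s_0$. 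Consequently (i) is equivalent to the existence of $\epsilon_1>0$ with $\Omega(\Gamma_j)\cap(\lambda_0,\lambda_0+\epsilon_1)=\emptyset$ for every $j$, i.e. to the set
$$ E:=\bigl\{\, s(\lambda)\ :\ \lambda\in\Omega(\Gamma_j)\setminus\{\lambda_0\},\ j\in\N \,\bigr\}\subset(0,s_0) $$
being bounded away from $s_0$.

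Next I would bring in representation theory. By the Duality Theorem, for $\lambda\in(0,\tfrac14)$ one has $\lambda\in\Omega(\Gamma_j)$ if and only if the complementary series $\pi_{s(\lambda)}$ occurs in $L^2(G/\Gamma_j)$; since $\lambda_0$ is simple (Patterson), $V_{s_0}$ is one-dimensional and $\pi_{s_0}$ does not occur in $L^2_0(G/\Gamma_j)$. Therefore every element of $\mathcal{R}$ with parameter in the segment $[0,\tfrac12]$ has parameter in $E$, while all remaining elements of $\mathcal{R}$ are spherical principal series, whose parameters lie on $i\R^{+}$ and hence at distance at least $s_0>0$ from $s_0$. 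By Lubotzky's description, the Fell topology on $\hat{G}^{1}$ is the subspace topology inherited from $i\R^{+}\cup[0,\tfrac12]\subset\C$; thus $\pi_{s_0}$ possesses a Fell-neighbourhood disjoint from $\mathcal{R}$, equivalently is isolated in $\mathcal{R}\cup\{\pi_{s_0}\}$, if and only if $E$ is bounded away from $s_0$. Combined with the previous paragraph, this gives (i)$\Leftrightarrow$(ii).

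Once the three inputs are granted --- resonances near $\delta$ are exceptional eigenvalues of $\Delta_{X_j}$, the Duality Theorem, and the identification of the Fell topology on $\hat{G}^{1}$ with the topology on the parameter set --- the proof is a translation. The point that deserves care is the uniformity in the first input: the neighbourhood of $\delta$ on which resonances coincide with $L^2$-eigenvalues must be taken independently of $j$, which is exactly what the $j$-independent half-plane $\{\Re(s)>\tfrac12\}$ and the equality $\lambda_0(\Gamma_j)=\lambda_0$ provide.
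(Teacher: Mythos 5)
Your proof is correct and follows essentially the same route as the paper's: identify resonances near $\delta$ with exceptional eigenvalues via $s(1-s)$, invoke the Duality Theorem together with the simplicity of $\lambda_0$ to translate the uniform eigenvalue gap into a statement about which complementary-series parameters appear in $\mathcal{R}$, and finish using Lubotzky's identification of the Fell topology on $\hat G^1$ with the subspace topology of $i\mathbb{R}^+\cup[0,\tfrac12]\subset\mathbb{C}$. Your version spells out a couple of points the paper leaves implicit (the $j$-uniformity of the resonance--eigenvalue dictionary, the explicit value $s_0=\delta-\tfrac12$, and why the principal-series part of $\mathcal{R}$ is harmless), but the underlying argument is the same.
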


\begin{proof}
Since the resonances $ s $ of $ X_{j} = \Gamma_{j}\setminus \mathbb{H} $ with $ \Re(s) > \frac{1}{2} $ correspond to the eigenvalues $ \lambda = s(1-s)\in [\lambda_{0}, \frac{1}{4}) $, the uniform spectral gap condition (i) can be stated as follows. There exists $ \epsilon_{1} > 0 $ such that for all $ j $ we have
\begin{equation}\label{eigenvalue gap}
\Omega(\Gamma_{j}) \cap [0, \lambda_{0} + \varepsilon_{1}) = \{ \lambda_{0} \}.
\end{equation} 
Now we can reformulate (\ref{eigenvalue gap}) in representation-theoretic language. Set $ s_{0} = s(\lambda_{0}). $ Then by the Duality Theorem, there exists $ \epsilon > 0 $ such that for all $ j $ and all $ s\in (s_{0}-\varepsilon, \frac{1}{2}] $, the complementary series representation $ \pi_{s} $ does not occur as a subrepresentation of $ L^{2}(G/\Gamma_{j}). $ Since $ V_{s_{0}} $ is one-dimensional (and each representation $ \pi_{s} $ with $ s\neq \frac{1}{2} $ is infinite-dimensional), (i) is equivalent to 
\begin{equation}\label{isolation}
\mathcal{R} \cap \left( s_{0}-\varepsilon, \frac{1}{2}\right] = \{  s_{0}\}. 
\end{equation}
Since the Fell topology on $ \hat{G}^{1} $ is equivalent to the one induced by viewing $ \hat{G}^{1} $ as the subset $ i\mathbb{R}^{+} \cup \left[ 0, \frac{1}{2}\right] $ of the the complex plane, the equivalence of (i) and (ii) is now evident. 
\end{proof}

Let $ 1_{\Gamma_{j}} $ denote the trivial representation of $ \Gamma_{j} $ on $ \mathbb{C} $. Then the induced representation $ Ind_{\Gamma_{j}}^{\Gamma} 1_{\Gamma_{j}} $ is equivalent to the (left) quasiregular representation $ (\lambda_{\textbf{G}_{j}}, L^{2}(\textbf{G}_{j})) $ of $ \Gamma $  defined by 
$$ (\lambda_{\textbf{G}_{j}}(\gamma)F)(h\Gamma_{j} ) = (\gamma.F)(h\Gamma_{j})  = F(h\gamma^{-1} \Gamma_{j}). $$
The action of $ \Gamma $ on $ L^{2}(\textbf{G}_{j}) $ given by $ \gamma.F = \lambda_{\textbf{G}_{j}}(\gamma)F $ is transitive. Hence the only $ \Gamma $-fixed vectors are the constants. Thus we can decompose the representation of $ \Gamma $ on $ L^{2}(\textbf{G}_{j}) $ into a direct of subrepresentations 
$$ L^{2}(\textbf{G}_{j}) = L_{0}^{2}(\textbf{G}_{j}) \oplus \mathbb{C}, $$
where $ L_{0}^{2}(\textbf{G}_{j}) $ is the subspace of functions orthogonal to the constant function, and $ (1_{\Gamma}, \mathbb{C}) $ does not occur as a subrepresentation of $ L_{0}^{2}(\textbf{G}_{j}). $

Consider the following subset of $ \hat{\Gamma} $:
$$ \mathcal{T} = \bigcup_{j\in \mathbb{N}} \{ (\rho, V) : \rho \text{ is irreducible unitary subrepresentation of } L^{2}_{0}(\textbf{G}_{j}) \} / \sim, $$
We claim the following.

\begin{lem}\label{before expanders}
Assume that one of the equivalent statements in Lemma \ref{Intermediate} holds true. Then the trivial representation $ 1_{\Gamma} $ is isolated in $ \mathcal{T} \cup \{ 1_{\Gamma} \} $ with respect to the Fell topology.
\end{lem}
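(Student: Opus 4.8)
The plan is to prove the contrapositive. We are allowed to assume statement (ii) of Lemma \ref{Intermediate}, namely that $\pi_{s_0}$ is isolated in $\mathcal{R}\cup\{\pi_{s_0}\}$, and we must show that $1_\Gamma$ is isolated in $\mathcal{T}\cup\{1_\Gamma\}$. Suppose it is not. Since the constant functions are removed in passing from $L^2(\G_j)$ to $L^2_0(\G_j)$, the trivial representation does not belong to $\mathcal{T}$, so "not isolated'' is equivalent to $1_\Gamma\in\overline{\mathcal{T}}$; and, the support of a direct sum being the closure of the union of the supports, this is equivalent to the weak containment of $\Gamma$-representations
$$1_\Gamma\ \prec\ \bigoplus_{j} L^2_0(\G_j),$$
the irreducible constituents of the right-hand side being exactly the members of $\mathcal{T}$.

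Next I would feed this into Fell's continuity of induction: $\mathrm{Ind}_\Gamma^G$ preserves weak containment and commutes with direct sums, so the display above gives $\mathrm{Ind}_\Gamma^G 1_\Gamma\prec\bigoplus_j\mathrm{Ind}_\Gamma^G L^2_0(\G_j)$. By induction in stages together with $\mathrm{Ind}_{\Gamma_j}^\Gamma 1_{\Gamma_j}\cong L^2(\G_j)=L^2_0(\G_j)\oplus\mathbb{C}$ one obtains the orthogonal decomposition
$$L^2(G/\Gamma_j)\ \cong\ \mathrm{Ind}_\Gamma^G L^2_0(\G_j)\ \oplus\ L^2(G/\Gamma),$$
where $L^2(G/\Gamma)$ embeds in $L^2(G/\Gamma_j)$ as the functions pulled back from $G/\Gamma$. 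Since the bottom eigenvalue $\lambda_0$ is simple on each $X_j$ (Patterson) and the corresponding ground state is the pull-back of the ground state of $X$, the copy $V_{s_0}\cong\pi_{s_0}$ lies inside this $L^2(G/\Gamma)$, hence $\mathrm{Ind}_\Gamma^G L^2_0(\G_j)\subseteq V_{s_0}^{\perp}=L^2_0(G/\Gamma_j)$. Combining, $\pi_{s_0}$, being a subrepresentation of $\mathrm{Ind}_\Gamma^G 1_\Gamma = L^2(G/\Gamma)$, is weakly contained in $\bigoplus_j L^2_0(G/\Gamma_j)$.

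It then remains to upgrade this to a statement about \emph{spherical} constituents. Because $\pi_{s_0}$ is irreducible and spherical, its unit $K$-fixed vector is cyclic and its bi-$K$-invariant spherical matrix coefficient is a uniform-on-compacta limit of positive-type functions of $\bigoplus_j L^2_0(G/\Gamma_j)$; averaging these over $K\times K$ keeps them of positive type and turns them into coefficients of $K$-fixed vectors, i.e. of the spherical part of $\bigoplus_j L^2_0(G/\Gamma_j)$. Hence $\pi_{s_0}$ is weakly contained in that spherical part. Under the identification $\hat G^1\cong i\mathbb{R}^{+}\cup[0,\tfrac12]$ the spherical support splits into the tempered principal series $i\mathbb{R}^{+}$, which is bounded away from the complementary-series point $\pi_{s_0}$, and exceptional points in $[0,\tfrac12)$, each of which is an honest spherical subrepresentation of some $L^2_0(G/\Gamma_j)$ and hence lies in $\mathcal{R}$. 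Therefore $\pi_{s_0}\in\overline{\mathcal{R}}$, while $\pi_{s_0}\notin\mathcal{R}$ (it was removed, the multiplicity of $\lambda_0$ being one), so $\pi_{s_0}$ is not isolated in $\mathcal{R}\cup\{\pi_{s_0}\}$. This contradicts Lemma \ref{Intermediate}(ii) and finishes the argument.

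The step I expect to be the main obstacle is the last one: passing from weak containment of $\pi_{s_0}$ in the full induced representation to weak containment in its spherical part, and then making sure the nearby points of the spherical support are genuine elements of $\mathcal{R}$ rather than continuous (non-subrepresentation) spectrum. This is precisely the place where one uses that for an infinite-area hyperbolic surface the exceptional spherical spectrum — the complementary-series range $(0,\tfrac12)$ — is discrete and made up of true eigenvalues, so that spherical points of $\hat G^1$ close to $\pi_{s_0}$ that occur in the support are honest subrepresentations.
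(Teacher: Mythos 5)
Your proof is correct and follows the same overall route as the paper: induction by stages to produce the decomposition $L^2(G/\Gamma_j)\cong L^2(G/\Gamma)\oplus\mathrm{Ind}_\Gamma^G L_0^2(\G_j)$, Fell's continuity of induction applied to $1_\Gamma\prec\bigoplus_j L_0^2(\G_j)$, and the fact that $\pi_{s_0}\prec\mathrm{Ind}_\Gamma^G 1_\Gamma$, all feeding into a contradiction with Lemma \ref{Intermediate}(ii). The one place where the two arguments genuinely diverge is the passage from weak containment in $\bigoplus_j L_0^2(G/\Gamma_j)$ to membership in $\overline{\mathcal{R}}$. The paper handles this by asserting that $\mathrm{Ind}_\Gamma^G\tau$ is irreducible (hence a spherical irreducible subrepresentation of $L_0^2(G/\Gamma_j)$, hence in $\mathcal{R}$). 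That assertion is not true in general for induction from a discrete subgroup of $\mathrm{SL}_2(\R)$, so this step of the paper's proof is at best a shorthand. Your $K\times K$-averaging of matrix coefficients, together with the observation that the spherical support of $L_0^2(G/\Gamma_j)$ splits into the tempered line $i\mathbb{R}^{+}$ (bounded away from $\pi_{s_0}$) and a discrete complementary-series part consisting of honest subrepresentations, gives exactly the justification that the paper elides. You correctly identified this as the delicate point; your treatment is the more careful of the two.
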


\begin{proof}
Let $ K $ be a closed subgroup of a locally compact group $ H $. Given a unitary representation $ (\pi, V) $ of $ K $, the induced representation $ Ind_{K}^{H} \pi $ of $ H $ is defined as follows. Let $ \mu $ be a quasi-invariant regular Borel measure on $ H/K $ and set
\begin{equation}\label{induced representation}
Ind_{K}^{H} \pi := \{ f : H\to V : f(hk) = \pi(k^{-1})f(h) \text{ for all } k\in K \text{ and } f\in L^{2}_{\mu}(H/K) \}.
\end{equation}
Note that the requirement $ f\in L^{2}_{\mu}(H/K) $ makes sense, since the norm of $ f(g) $ is constant on each left coset of $ H $. The action of $ G $ on $ Ind_{H}^{G} \pi $ is defined by
$$ g.f(x) = f(g^{-1}x) $$ 
for all $ x,g\in G $, $ f\in Ind_{H}^{G} \pi. $ We also note that the equivalence class of the induced representation $ Ind_{K}^{H} \pi $ is independent of the choice of $ \mu $. We refer the reader to \cite[Chapter~E]{Property(T)} for a more thorough discussion on properties of induced representations.

If two representations $ (\pi_{1}, \mathcal{H}_{1}) $ and $ (\pi_{2}, \mathcal{H}_{2}) $ are equivalent, we write $ \mathcal{H}_{1} = \mathcal{H}_{2} $ by abuse of notation. Using induction by stages (see \cite{Folla-95} or \cite{Gaal-73} for a proof) we have
\begin{align*}
V_{s_{0}} \oplus L_{0}^{2}(G/\Gamma_{j}) &= L(G/\Gamma_{j})\\
&= Ind_{\Gamma_{j}}^{G} 1_{\Gamma_{j}}\\
&= Ind_{\Gamma}^{G} Ind_{\Gamma_{j}}^{\Gamma} 1_{\Gamma_{j}}\\
&= Ind_{\Gamma}^{G} L^{2}(\textbf{G}_{j})\\
&= Ind_{\Gamma}^{G} 1_{\Gamma} \oplus Ind_{\Gamma}^{G} L_{0}^{2}(\textbf{G}_{j})\\
&= V_{s_{0}} \oplus L_{0}^{2}(G/\Gamma) \oplus Ind_{\Gamma}^{G} L_{0}^{2}(\textbf{G}_{j}).
\end{align*}
Choose an index $ j $ and an irreducible unitary subrepresentation $ (\tau, V) $ of $ L_{0}^{2}(\textbf{G}_{j}) $. The above calculation implies that $ Ind_{\Gamma}^{G} \tau $ is a unitary subrepresentation of $ L_{0}^{2}(G/\Gamma_{j}) $. Since $ \tau $ is unitary and irreducible, so is $ Ind_{\Gamma}^{G} \tau $. Moreover $ Ind_{\Gamma}^{G} \tau $ is a spherical representation of $ G $, since any non-zero function $ f\in L^{2}(\mathbb{H}/\Gamma) $ and non-zero vector $ v\in V $ gives rise to a non-zero $ K $-invariant function $ F\in Ind_{\Gamma}^{G} \tau $. Indeed, we have $ \mathbb{H} \cong K\setminus G $, so that we my view $ f $ as function $ f : G\to \mathbb{C} $ satisfying $ f(kg\gamma) = f(g) $ for all $ g\in G, k\in K, \gamma\in \Gamma $. Now one easily verifies that $ F = fv : G\to V $ belongs to $ Ind_{\Gamma}^{G} \tau $ and is invariant under $ K $. In other words, $ Ind_{\Gamma}^{G} \tau $ belongs to $ \mathcal{R} $.

Now suppose the lemma is false. Then there exists a sequence $ (\tau_{n})_{n\in \mathbb{N}} \subset \mathcal{T} $ that converges to $ 1_{\Gamma} $ as $ n\to \infty $. On the other hand, $ \pi_{s_{0}} $ is weakly contained in $ Ind_{\Gamma}^{G} 1_{\Gamma} $. By Fell's continuity of induction \cite{fell} we have 
$$ \pi_{s_{0}} \prec Ind_{\Gamma}^{G} 1_{\Gamma} = \lim_{n\to \infty} Ind_{\Gamma}^{G} \tau_{n} \in \overline{\mathcal{R}},   $$
which contradicts Lemma \ref{Intermediate}.
\end{proof}

We can now prove Proposition \ref{expanders and gap}.

\begin{proof}[Proof of Proposition \ref{expanders and gap}]
Let us recall the definition of the Fell topology on $ \hat{\Gamma} $ (for further reading consult \cite[Chapter~F]{Property(T)}). For an irreducible unitary representation $ (\pi, V) $ of $ \Gamma $, for a unit vector $ \xi\in V $, for a finite set $ Q\subset \Gamma $, and for $ \varepsilon > 0 $ let us define the set  $ W(\pi, \xi, Q, \varepsilon) $ that consists of all irreducible unitary representations $ (\pi', V') $ of $ \Gamma $ with the following property. There exists a unit vector $ \xi'\in V' $ such that
$$ \sup_{\gamma\in Q} \vert \langle \pi(\gamma)\xi, \xi\rangle_{V} - \langle \pi'(\gamma)\xi', \xi'\rangle_{V'} \vert < \varepsilon. $$
The Fell topology is generated by the sets $ W(\pi, \xi, Q, \varepsilon) $. By Lemma \ref{before expanders} and the definition of the Fell topology, there exists $ c_{0} = c_{0}(\Gamma, S) > 0 $ only depending on $ \Gamma $ and the generating set $ S $ of $ \Gamma $, but not on $ j $, such that for all $ F\in L_{0}^{2}(\textbf{G}_{j}) $
\begin{equation}\label{Implication}
\sup_{\gamma\in S} \vert \langle \gamma.F - F, F \rangle_{L^{2}(\textbf{G}_{j})}  \vert  \geq c_{0}\Vert F\Vert^{2}.
\end{equation}
By the Cauchy-Schwarz inequality we have 
$$ \sup_{\gamma\in S}\Vert \gamma .F - F \Vert  \geq c_{0}\Vert F\Vert. $$
Fix a non-empty subset $ A $ of $ \textbf{G}_{j} $ with $ \vert A\vert \leq \frac{1}{2}\vert \textbf{G}_{j}\vert $ and define the function
$$ F(x) = 
\begin{cases}
\vert \textbf{G}_{j}\vert - \vert A\vert, & \text{ if } x\in A\\
-\vert A\vert & \text{ if } x\notin A.
\end{cases}
$$
One can verify that $ F\in L_{0}^{2}(\textbf{G}_{j}) $ and $ \Vert F\Vert^{2} = \vert A\vert\vert \textbf{G}_{j} \vert (\vert \textbf{G}_{j}\vert - \vert A\vert). $ On the other hand,
$$ \Vert \gamma .F - F \Vert^{2} = \vert \textbf{G}_{j}\vert^{2} E_{\gamma}(A, \textbf{G}_{j}\setminus A), $$
where
$$ E_{\gamma}(A,B) := \left\vert \{  x\in \textbf{G}_{j} :  x\in A \text{ and } x\gamma \in B \text{ or } x\in B \text{ and } x\gamma \in A \} \right\vert . $$
Therefore there exists $ \gamma\in S $ such that
\begin{align*}
E_{\gamma}(A, \textbf{G}_{j}\setminus A)
= \frac{\Vert \gamma. F - F \Vert^{2}}{\vert \textbf{G}_{j}\vert^{2}}
\geq \frac{c_{0}^{2}\Vert F\Vert^{2}}{\vert \textbf{G}_{j}\vert^{2}} = c_{0}^{2} \left( 1-\frac{\vert A\vert}{\vert \textbf{G}_{j}\vert} \right) \vert A\vert.
\end{align*}
Thus we obtain a lower bound for the size of the boundary of $ A $ in the graph $ \mathcal{G}_{j} = \mathrm{Cay}(\textbf{G}_{j}, S_{j}) $:
$$ \vert \partial A\vert \geq \frac{1}{2} \sup_{\gamma\in S} E_{\gamma}(A, \textbf{G}_{j}\setminus A) \geq  \frac{c_{0}^{2}}{2} \left( 1-\frac{\vert A\vert}{\vert \textbf{G}_{j}\vert} \right) \vert A\vert \geq \frac{c_{0}^{2}}{4} \vert A\vert. $$
Consequently, $ h(\mathcal{G}_{j}) \geq c_{0}^{2}/4 $ for all $ j $ and thus, the graphs $ \mathcal{G}_{j} $ form a family of expanders. The proof of Proposition \ref{expanders and gap} is complete. 
\end{proof}

 \end{document}